\DeclareSymbolFont{AMSb}{U}{msb}{m}{n}
\numberwithin{equation}{section}
\pgfplotsset{width=10cm,compat=1.9}
\DeclareFontFamily{U}{mathx}{}
\DeclareFontShape{U}{mathx}{m}{n}{<-> mathx10}{}
\DeclareSymbolFont{mathx}{U}{mathx}{m}{n}
\DeclareMathAccent{\widehat}{0}{mathx}{"70}
\DeclareMathAccent{\widecheck}{0}{mathx}{"71}
\providecommand{\mr}[1]{\href{http://www.ams.org/mathscinet-getitem?mr=#1}{MR~#1}}
\providecommand{\zbl}[1]{\href{https://zbmath.org/?q=an:#1}{Zbl~#1}}
\newcommand{\C}{\mathcal{C}}
\newcommand{\ii}{\imath}
\definecolor{light_gray}{gray}{0.75}
\definecolor{lighter_gray}{gray}{0.5}
\colorlet{light_blue}{blue!20}
\definecolor{dark_green}{rgb}{0.0, 0.6, 0.0}
\definecolor{royal_blue}{rgb}{0.0, 0.22, 0.66}
\definecolor{salmon}{rgb}{1.0, 0.55, 0.41}
\definecolor{gold}{rgb}{0.8, 0.63, 0.21}
\definecolor{navy_blue}{rgb}{0.0, 0.0, 0.5}
\definecolor{crimson}{rgb}{0.79, 0.0, 0.09}
\definecolor{amethyst}{rgb}{0.6, 0.4, 0.8}
\definecolor{alizarin}{rgb}{0.82, 0.1, 0.26}
\definecolor{amaranth}{rgb}{0.9, 0.17, 0.31}
\definecolor{azure}{rgb}{0.0, 0.5, 1.0}
\definecolor{canaryyellow}{rgb}{0.82, 0.41, 0.12}
\definecolor{carrotorange}{rgb}{0.8, 0.33, 0.0}
\definecolor{cadmiumgreen}{rgb}{0.0, 0.42, 0.24}
\definecolor{copper}{rgb}{0.72, 0.45, 0.2}
\definecolor{aqua}{rgb}{0.5, 1.0, 0.83}
\definecolor{awesome}{rgb}{1.0, 0.13, 0.32}
\definecolor{candyapplered}{rgb}{1.0, 0.03, 0.0}
\definecolor{caribbeangreen}{rgb}{0.0, 0.8, 0.6}
\definecolor{indigo}{rgb}{0.0, 0.25, 0.42}
\DeclareMathOperator{\weaklystar}{\rightharpoonup\kern-2.2ex ^* \, \,}
\def\XXint#1#2#3{{\setbox0=\hbox{$#1{#2#3}{\int}$ }
\vcenter{\hbox{$#2#3$ }}\kern-.6\wd0}}
\DeclareMathOperator{\curl}{curl}
\newcommand{\R}{\mathbb R}
\newcommand{\N}{\mathbb N}
\newcommand{\Z}{\mathbb Z}
\renewcommand{\C}{\mathbb C}
\newcommand\norm[1]{\lVert #1 \rVert}
\newcommand\scpr{\boldsymbol{\cdot}}
\newcommand{\ra}{\rightarrow}
\newcommand{\mL}{\mathrm{L}}
\renewcommand{\phi}{\varphi}
\newcommand{\mH}{\mathrm{H}}
\newcommand{\ee}{\mathrm{e}}
\theoremstyle{plain}
\newtheorem{theorem}{Theorem}[section]
\newtheorem{corollary}[theorem]{Corollary}
\newtheorem{lemma}[theorem]{Lemma}
\newtheorem*{theorem*}{Theorem}
\theoremstyle{definition}
\newtheorem{definition}[theorem]{Definition}
\newtheorem{remark}[theorem]{Remark}
\newtheorem*{remark*}{Remark}
\begin{document}
\numberwithin{table}{section}
\title{Simulation of the magnetic Ginzburg-Landau equation via vortex tracking}

\author[T.~Carvalho~Corso]{Thiago Carvalho Corso}
\address[T.~Carvalho Corso]{Institute of Applied Analysis and Numerical Simulation, University of Stuttgart, Pfaffenwaldring 57, 70569 Stuttgart, Germany}
\email{thiago.carvalho-corso@mathematik.uni-stuttgart.de}

\author[G.~Kemlin]{Gaspard Kemlin}
\address[G.~Kemlin]{LAMFA, Universit\'e de Picardie Jules Verne and CNRS, UMR 7352, 80039 Amiens, France}
\email{gaspard.kemlin@u-picardie.fr}

\author[Ch.~Melcher]{Christof Melcher}
\address[Ch.~Melcher]{Applied Analysis and JARA FIT, RWTH Aachen University, 52056 Aachen, Germany}
\email{melcher@math1.rwth-aachen.de}

\author[B.~Stamm]{Benjamin Stamm}
\address[B.~Stamm]{Institute of Applied Analysis and Numerical Simulation, University of Stuttgart, Pfaffenwaldring 57, 70569 Stuttgart, Germany}
\email{best@ians.uni-stuttgart.de}

\keywords{Ginzburg-Landau, magnetic vortices, Schr\"odinger flow, superconductor model}
\subjclass[2020]{Primary: 65M12, 35A21 Secondary 35Q55}

\date{\today}
\thanks{\emph{Funding information}: DFG -- Project-ID 442047500 -- SFB 1481.\\[1ex]
\textcopyright 2025 by the authors. Faithful reproduction of this article, in its entirety, by any means is permitted for noncommercial purposes.}
\begin{abstract}
This paper deals with the numerical simulation of the 2D magnetic time-dependent
Ginzburg-Landau (TDGL) equations in the regime of small but finite (inverse)
Ginzburg-Landau parameter $\epsilon$ and constant (order $1$ in $\epsilon$)
applied magnetic field. In this regime, a well-known feature of the TDGL
equation is the appearance of quantized vortices with core size of order
$\epsilon$. Moreover, in the singular limit $\epsilon \searrow 0$, these
vortices evolve according to an explicit ODE system. In this work, we first
introduce a new numerical method for the numerical integration of this limiting
ODE system, which requires to solve a linear second order PDE at each time step.
We also provide a rigorous theoretical justification for this method that
applies to a general class of 2D domains. We then develop and analyze a
numerical strategy based on the finite-dimensional ODE system to efficiently
simulate the infinite-dimensional TDGL equations in the presence of a constant
external magnetic field and for small, but finite, $\epsilon$. This method
allows us to avoid resolving the $\epsilon$-scale when solving the TDGL
equations, where small values of $\epsilon$ typically require very fine meshes
and time steps. We provide numerical examples on a few test cases and justify
the accuracy of the method with numerical investigations. We end the paper showing that, in the mixed flow case, the limiting ODE system
is able to capture the crystallization process in which, for large times, the
vortices arrange into a stable pattern.
\end{abstract}
\maketitle
\setcounter{secnumdepth}{3}

\section{Introduction}

\subsection{Motivation}

The Ginzburg-Landau (GL) theory has played a central role in the description of superconductivity since its introduction by
Ginzburg and Landau \cite{GL50}. By encoding macroscopic quantum behavior through a complex order parameter, the GL framework bridges microscopic physics
to observable phenomena such as permanent superconducting currents, the expulsion of external magnetic fields from the superconducting sample (Meissner effect), and the different phase transitions between superconducting, mixed, and normal states. Its time-dependent extension, the time-dependent Ginzburg–Landau
(TDGL) equation, was later developed to describe nonequilibrium processes in
superconductors \cite{GE68,Sch66}. Moreover, while originally proposed on phenomenological grounds, the GL theory was later justified from the microscopic Bardeen-Cooper-Schrieffer (BCS) theory \cite{BCS57,Gor59,FHSS12}. However, we note that a rigorous derivation of BCS theory from many-body quantum mechanics remains a major challenge in mathematical physics.

 A particularly striking feature of the GL framework is the emergence of
quantized vortices when the intensity of the applied magnetic field reaches
a critical threshold \cite{Abr57}. In superconductors, these vortices govern dissipation through
flux motion and pinning, thereby determining critical currents and resistance
under applied fields \cite{BS65,Tin96,ST11}. Therefore, their dynamics, interactions, nucleation, and annihilation are
central to understanding the dynamical response of type-II superconductors in
technological applications. Beyond its original setting, GL-type models
have also found applications in the study of Bose–Einstein condensates, where
they serve as effective mean-field descriptions of coherent matter waves (see, e.g., \cite{Aft06} for a mathematical overview). As such, the accurate simulation of vortex dynamics within the TDGL equation is of both fundamental and applied importance.

From a computational perspective, however, the resolution of vortex structures
poses significant challenges. The characteristic vortex core size is set by the
(inverse) Ginzburg–Landau parameter $\epsilon$, which may be very small in
physically relevant regimes, e.g., for superconductors of type II. Standard
finite element discretizations, while powerful in handling complex geometries,
require extremely fine meshes to capture the core structure when $\epsilon \ll
1$, leading to prohibitively large computational costs even in the static case \cite{DGP92,DH24,BDH25,DDH25}. This
difficulty is amplified in time-dependent simulations, where vortices may move
and interact dynamically. Despite the central importance of vortex dynamics,
relatively few numerical methods are designed specifically for the
small-$\epsilon$ regime (see \cite{CKMS25} and references therein) and the
development of efficient and accurate approaches for TDGL simulations in this setting remains an open and difficult problem \cite{Du97,Du05,Bar05a,Bar05b,BMO11}.
In the specific regime of a constant external magnetic field $h_{\rm ex} = \mathcal{O}(1)$ and $\epsilon\searrow0$, the vortices move according to a finite dimensional ODE system (see for instance \cite{KS11} for a rigorous derivation of this limiting system, and references therein). Extending our previous work \cite{CKMS25} in the simpler case without magnetic field, we present in this paper a novel numerical method to tackle the simulation of the TDGL equations in the regime of small, but finite, $\epsilon$ under a constant external magnetic field. This method is designed to achieve higher accuracy for smaller values of $\epsilon$.

\subsection{Main contribution}

{The main contributions of this article can be summarized as follows:
\begin{enumerate}[label=(\roman*)]
\item We propose and implement a numerical method to compute approximate solutions of the time-dependent Ginzburg-Landau equation with magnetic fields in the regime of small but finite $\epsilon$. This method builds on rigorous analytical results concerning the dynamics of the PDE solutions in the limit as $\epsilon \searrow 0$. In particular, it avoids the use of extremely fine meshes that are required by standard FEM methods to resolve the vortex core.
\item We propose and implement a numerical scheme to efficiently simulate the vortex dynamics in the limit $\epsilon \searrow 0$.
Moreover, we provide a theoretical justification for this scheme that applies to general Lipschitz domains.
\end{enumerate}}

\subsection{Outline of the paper}

The paper is structured as follows. In the next section, we introduce the
necessary notations for precisely stating the TDGL equations and highlight some
of its basic properties. We then recall some rigorous results on the asymptotic
behavior of solutions to the TDGL equations in the limit where the inverse
Ginzburg-Landau parameter $\epsilon$ vanishes and the applied magnetic field is
kept constant. In Section~\ref{sec:ODE dynamics}, we describe the numerical
scheme for simulating the asymptotic dynamics of vortices. In addition, we
provide a theoretical justification for this numerical scheme, present numerical
simulations on the vortex trajectories, and numerically analyze the convergence
rates with respect to the discretization parameters. In
Section~\ref{sec:numerical scheme}, we present the numerical scheme to
efficiently approximate solutions of the TDGL equation in the regime of small
but finite $\epsilon$. We then illustrate our method with some numerical
experiments, and we compare the results with reference
solutions computed via finite elements to numerically justify the accuracy of
our method. Finally, in Section~\ref{sec:mixed-flow} we show that the limiting ODE system under a mixed-flow -- \emph{i.e.} adding dissipation
  -- is capable of producing vortices that arrange after some time into a stable
  lattice pattern, provided that the external magnetic field is large enough.

\section{The time-dependent magnetic Ginzburg-Landau equation}
\label{sec:GL theory}

Let us now introduce the necessary notations to state the time-dependent Ginzburg-Landau (TDGL) equations and recall a few basic facts about these equations. We then recall some important theoretical results regarding the asymptotic behavior of the solutions in the regime of interest to us. These results form the basis for our numerical method, which will be described in Section~\ref{sec:numerical scheme}.

Throughout this paper, we let $\Omega \subset \R^2$ be an open bounded and
simply connected subset with smooth boundary. In the phenomenological
description of superconductors via the Ginzburg-Landau theory, there are two
main variables of interest: the so-called order parameter $u: \Omega \rightarrow
\C$, describing the density of superconducting Cooper pairs, and the associated
magnetic vector potential $A: \Omega \rightarrow \R^2$ inside the
superconducting sample $\Omega$. The Ginzburg-Landau energy of the pair $(u,A)$
then reads
\begin{align}
    GL_\epsilon(u,A) = \frac12 \int_\Omega |\nabla_A u(x)|^2 + |\curl A(x) - h_{\rm ex}|^2 + \frac{(1-|u(x)|^2)^2}{2\epsilon^2} \mathrm{d} x, \label{eq:GLenergy}
\end{align}
where $\nabla_A u \coloneqq \nabla u - \ii A u$ is the covariant gradient,
$\curl A \coloneqq \partial_1 A_2 - \partial_2 A_1$ is the rotational of $A$
(a real number since the domain is two dimensional), and $h_{\rm ex}\in\R$ is
the applied external magnetic field. Typically, $h_{\rm ex}$ is assumed constant
throughout the sample $\Omega$.

The time evolution of the order parameter and magnetic field can then be modeled (see \cite{GE68,Sch66,Dor92}) via the following time-dependent Ginzburg-Landau equations:
\begin{align}
    &\delta_\epsilon \left(\partial_t u + \ii \Phi u\right) = - \nabla_u GL_\epsilon(u,A) = \nabla_A^2 u + \frac{1}{\epsilon^2}u (1-|u|^2),\label{eq:TDGL} \\
    &\sigma_\epsilon \left(\partial_t A + \nabla \Phi\right) = - \nabla_A GL_\epsilon(u,A) = -\curl h + (\ii u, \nabla_A u), \label{eq:TDmagnetic}
\end{align}
where $h = \curl A$ is the magnetic field induced by $A$, $(u,v) = \mathrm{Re}
(u \overline{v}) = \frac{u \overline{v} + \overline{u} v}{2}$ is the standard
inner-product on $\R^2 \simeq \C$, $\curl h = -\nabla^\perp h
= (\partial_2 h, - \partial_1 h)^T$ is the curl of the scalar magnetic field,
$\delta_\epsilon = \alpha_\epsilon + \ii
\beta_\epsilon$ is a complex relaxation parameter and $\sigma_\epsilon > 0$ is
the conductivity. In addition, we impose what is called the natural boundary
conditions\footnote{The term natural boundary conditions refers to the fact that Neumann boundary conditions are automatically satisfied by minimizers of the GL energy $GL_\epsilon(u,A)$ without any boundary constraints, i.e., in the spaces defined in~\eqref{eq:minimization spaces}-\eqref{eq:u space}. }
\begin{align}
    \nu \scpr \nabla_A u = 0, \quad \nu \scpr (\partial_t A + \nabla \Phi) = 0, \quad\mbox{and}\quad h = h_{\rm ex} \quad \mbox{on $\partial \Omega$.} \label{eq:naturalbc}
\end{align}
The parameter $\epsilon>0$ is a material constant. In this work, we are interested in the regime of small $\epsilon>0$, which corresponds to superconductors of type II.

At first sight, it may seem that the electric potential $\Phi$ is an additional
variable of the problem. However, note that by taking the divergence of
\eqref{eq:TDmagnetic}, the electric potential is, up to a constant, uniquely
determined as the solution of the following Poisson equation with Neumann
boundary conditions, where $j_A(u)  = (\ii u, \nabla_A u)$ is the
super-current:
\begin{align}
\Delta \Phi = \frac{1}{\sigma_\epsilon} \mathrm{div} \, j_A(u)\quad \mbox{in $\Omega$}, \quad \partial_\nu \Phi  = \frac{1}{\sigma_\epsilon} j_A(u) \scpr \nu \quad \mbox{in $\partial \Omega$.}
\end{align}
Moreover, note that the GL energy, and therefore the space of solutions to the GL equations, is invariant under the gauge transformation
\begin{align}\label{eq:gauge}
	u \mapsto u\ee^{\ii t \chi}, \quad A \mapsto A + \nabla \chi, \quad \Phi \mapsto \Phi - \partial_t \chi,
\end{align}
for a (sufficiently regular) function $\chi : \R_+ \times \Omega \rightarrow \R$. Nevertheless, note that all of the physical quantities of interest for the model, namely the density $|u|^2$, the super-current $j_A(u)$, the induced electric field $E = \partial_t A +\nabla \Phi$, and the induced magnetic field $h = \mathrm{curl}\, A$, are invariant under this gauge transformation. In particular, we are free to choose the most convenient gauge for calculations. Following \cite{Spi03,KS11}, we shall work with the Coulomb gauge here, i.e., we impose the additional conditions:
\begin{align*}
\mathrm{div}\, A = 0 \quad \mbox{in $\Omega$,} \quad A \scpr \nu = 0 \quad \mbox{in $\partial \Omega$} \quad \mbox{and}\quad \int_\Omega \Phi =0.
\end{align*}
Under these conditions, equation~\eqref{eq:TDGL} is globally well-posed in the energy space, see e.g., \cite[Remark 1.3]{KS11} and \cite{TW95}.

\subsection{Asymptotic regime and limiting dynamics}

In this paper, we are mostly interested in the regime of small $\epsilon>0$. In this regime, one typically observes the formation of so-called vortices, which correspond to isolated zeros of the density $|u|^2$ with a non-zero winding number (or degree). A central topic in the study of superconductors is the description of the dynamics of such vortices. In particular, there are several works dedicated to deriving and justifying such dynamics with various degrees of mathematical rigor, see, e.g. \cite{PR93, E94, TW95, LD97, OS98,GS06,SS04,SS07} and references therein. In this section, we recall the results from \cite{Spi02,Spi03,KS11}, which apply to the model equation studied here.

To this end, let us introduce the so-called renormalized energy which corresponds to the next-order contribution to the energy for minimizers of the GL energy with a given vortex configuration. Precisely, let $n\in \N$ and let $\Omega^n_\ast$ be the set
\begin{align*}
    \Omega_\ast^n \coloneqq \{ a = (a_1,a_2,..,a_n) \in \Omega^n : a_j \neq a_i \quad \mbox{for any $i \neq j$.} \}.
\end{align*}
Then, we can define the renormalized energy of a given vortex configuration $(a,d) \in \Omega_\ast^n \times \Z^d$ as
\begin{align}
    W_\Omega(a,d;h_{\rm ex}) = \lim_{\rho \searrow 0}  \inf_{\substack{u\in \mathcal{H}^1_\rho(a) \\ A\in \mH^1_{\rm div}(\Omega)}} \left\{ \frac12\int_{\Omega_\rho(a)} |\nabla_A u|^2 + \int_\Omega |\mathrm{curl}\, A - h_{\rm ex}|^2 \mathrm{d} x\right\} -\pi \sum_{j=1}^n d_j^2 \log \frac{1}{\rho} \label{eq:renormalized variational}
\end{align}
where $\Omega_\rho(a) \coloneqq \Omega \setminus \cup_{j=1}^n B_\rho(a_j)$,
\begin{align}
    &\mH^1_{\mathrm{div}}(\Omega) \coloneqq \{ A \in \mH^1(\Omega;\R^2) : \mathrm{div} A =0 \mbox{ on $\Omega$,}\quad  A\scpr \nu = 0 \mbox{ on $\partial \Omega$}\},\label{eq:minimization spaces} \\
    \intertext{and}
    &\mathcal{H}^1_\rho(a) \coloneqq \{ u \in \mH^1(\Omega_\rho(a);\mathbb{S}^1) : \quad \mathrm{deg}(u,a_j) = d_j \quad \mbox{for any $j=1,...,n$}\}. \label{eq:u space}
\end{align}
Here, $\mathrm{deg}(u,a_j)$ denotes the topological degree of $u$ around $a_j$,
\begin{align*}
    \mathrm{deg}(u,a_j) \coloneq \frac{1}{2\pi} \int_{\partial B_\rho(a_j)} \frac{1}{|u|^2}(\ii u, \partial_\tau u),
\end{align*}
which is well-defined for any non-vanishing $u\in \mH^{\frac12}(\partial B_\rho(a_j);\C)$ (see, e.g., \cite{BM21}).
By the results in \cite[Appendix A.1]{Spi03} (see also \cite[Section
4.1]{LD97}), the renormalized energy can be written more
explicitly\footnote{Here we adopt the sign convention for $\Xi$ in \cite{KS11}.
  Note that this is the opposite sign convention from \cite{Spi03,LD97}; in
  particular, the terms depending on $\Xi$ in the renormalized energy appear
  with the opposite sign there.} as (see Appendix~\ref{app:W} for
  more details)
\begin{align}
    W_\Omega(a,d;h_{\rm ex}) = - \pi \sum_{i \neq j} d_i d_j \log |a_i-a_j| - \pi \sum_{j=1}^n d_j \left(R(a_j) + \Xi(a_j)\right) +  \frac{h_{\rm ex}^2}{2} |\Omega| + \frac{h_{\rm ex}}{2} \int_{\partial \Omega} \partial_\nu \Xi, \label{eq:renormalized explicit}
\end{align}
where $R$ is the harmonic function satisfying
\begin{align}
    \Delta R = 0 \quad \mbox{in $\Omega$} \quad \mbox{and}\quad R(x) = - \sum_{j=1}^n d_j \log |x-a_j|, \quad x \in \partial \Omega, \label{eq:Req0}
\end{align}
and $\Xi$ solves
\begin{align}
    \Delta^2 \Xi- \Delta \Xi  = 2\pi \sum_{j=1}^n d_j \delta_{a_j} \quad \mbox{in $\Omega$,} \quad \Xi = 0 \quad \mbox{on $\partial \Omega$,} \quad \mbox{and}\quad \Delta \Xi = - h_{\rm ex} \quad \mbox{on $\partial \Omega$.} \label{eq:xieq0}
\end{align}

Moreover, let us introduce the constant $\gamma_0>0$ defined (see \cite{BBH94}) as
\begin{align}
    \gamma_0 \coloneqq \lim_{r \ra \infty} I(r,\epsilon) - \pi \log \frac{r}{\epsilon}, \label{eq:renormalized constant}
\end{align}
where
\begin{align*}
    I(r,\epsilon) = \inf \left\{ \frac{1}{2} \int_{B_r}  |\nabla u|^2 + \frac{(1-|u|^2)^2}{2\epsilon^2} : u \in H^1(B_r), u= \ee^{\ii \theta} \mbox{on $\partial B_r$}\right\},
\end{align*}
and define the gauged Jacobian as
\begin{align}
    J_A(u) \coloneqq \frac12 \curl j_A(u) + \frac12 \curl A.\label{eq:Jacobian}
\end{align}

We can now introduce the following definition of well-prepared states, that is,
a sequence of $\epsilon$-dependent states whose energy and vorticity is
compatible with a given vortex configuration.

\begin{definition}[Well-prepared states] \label{def:wellprepared} Let $\{(u_\epsilon, A_\epsilon)\}_{\epsilon>0}\subset H^1(\Omega) \times H^1(\Omega; \R^2)$ be a sequence of functions. Then we say that $\{(u_\epsilon, A_\epsilon)\}_{\epsilon>0}$ are well-prepared with respect to $a \in \Omega^n_\ast$ and $d \in \{\pm 1\}^n$ for some $n\in \N$ if it holds that
\begin{align}
    &GL_\epsilon(u_\epsilon,A_\epsilon; h_{\rm ex}) = n \left(\pi \log \frac{1}{\epsilon}  - \gamma_0\right) + W_\Omega(a,d;h_{\rm ex}) + o(1) \intertext{and}
    & \lim_{\epsilon \searrow 0} \norm{J_{A_\epsilon}(u_\epsilon) - \pi \sum_{j=1}^n d_j \delta_{a_j}}_{\dot{W}^{-1,q}} = 0, \label{eq:energybound}
\end{align}
for some $q\geq 1$, where $W_\Omega$ is the renormalized energy~\eqref{eq:renormalized explicit} and $\gamma_0$ is the constant defined in~\eqref{eq:renormalized constant}.
\end{definition}

The main result of \cite{KS11} is that such well-preparedness is preserved along
time, which can be stated as follows:
\begin{theorem}[Theorem 1.1 in \cite{KS11}] \label{thm:vortices converg} Let $(u_\epsilon, A_\epsilon , \Phi_\epsilon) : \Omega \times [0,\infty) \rightarrow \C \times \R^2 \times \R$ be a sequence of solutions of
\begin{align}\label{eq:GPE_mag}
    \begin{dcases}
    &\delta_\epsilon \left(\partial_t u_\epsilon + \ii \Phi_\epsilon u_\epsilon\right) = \nabla_{A_\epsilon}^2 u_\epsilon + \frac{1}{\epsilon^2}u_\epsilon (1-|u_\epsilon|^2),\\
    &\sigma_\epsilon \left(\partial_t A_\epsilon + \nabla \Phi_\epsilon\right) =
    -\curl h_\epsilon + (\ii \,u_\epsilon, \nabla_{A_\epsilon} u_\epsilon), \end{dcases}  \quad \mbox{on $\Omega \times \R_+$.}
\end{align}
with $h_\epsilon =\curl A_\epsilon$ and boundary conditions
\begin{align}
    \begin{dcases} &\nu \scpr \nabla_{A_\epsilon} u_\epsilon = 0, \\
    &\nu \scpr (\partial_t A_\epsilon + \nabla\Phi_\epsilon) = 0, \\
    &h_\epsilon = h_{\rm ex},
    \end{dcases} \quad \mbox{on $\partial \Omega$}
\end{align}
where the initial conditions $\{(u_\epsilon^0, A_\epsilon^0)\}_{\epsilon >0}$ are well-prepared with respect to some $a \in \Omega^n_\ast$ and $d \in \{\pm 1\}^n$. Moreover, we assume that
\begin{align*}
    \delta_\epsilon = \frac{\alpha_0}{|\log \epsilon|} + \ii \beta_0 \quad \mbox{and}\quad \frac{\sigma_0}{|\log \epsilon|} \leq \sigma_\epsilon = o(1),
\end{align*}
where $\alpha_0, \beta_0 \geq 0$ and $\sigma_0 >0$. Then for any time $t \leq T_{\max}$, where $T_{\max}>0$ denotes the maximal time of existence of the solution $a(t) :[0, T_{\max}) \rightarrow \Omega^n_\ast$ of the ODE
\begin{align}
    \begin{dcases} \left(\alpha_0 - d_j \beta_0 \mathbb{J}\right)\dot{a}(t) = -
      \frac{1}{\pi} \nabla_{a_j} W_\Omega\left(a(t),d; h_{\rm ex}\right) \quad
      \mbox{for $t>0$}, \\ a(0) = a, \end{dcases} \quad\mbox{where}\quad
    \mathbb{J} = \begin{pmatrix} 0 & 1 \\ -1 & 0\end{pmatrix},\label{eq:ODE}
\end{align}
it holds that $\left(u(t),A(t)\right)$ are well-prepared with respect to $(a(t), d)$, i.e.,
\begin{enumerate}[label=(\roman*)]
\item (Jacobian) The gauged Jacobian $J_{A(t)}(u(t))$ satisfies
\begin{align*}
    \lim_{\epsilon \searrow 0} \norm{J_{A_\epsilon(t)}(u_\epsilon(t)) - \pi \sum_{j=1}^n d_j \delta_{a_j(t)}}_{\dot{W}^{1,p}} =0 , \quad \mbox{for any $1\leq p <2$.}
\end{align*}
\item (Propagation of energy bound) We have
\begin{align*}
    GL_\epsilon\left(u_\epsilon(t), A_{\epsilon}(t);h_{\rm ex}\right) = n \left(\pi \log\frac{1}{\epsilon} - \gamma_0\right) + W_{\Omega}(a(t),d;h_{\rm ex}) + o(1).
\end{align*}
\end{enumerate}
\end{theorem}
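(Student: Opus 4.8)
The theorem is a singular-limit statement for the gradient-flow--type system~\eqref{eq:GPE_mag}, and the plan is to establish it via the scheme of $\Gamma$-convergence of gradient flows (Sandier--Serfaty), adapted both to the mixed dissipative/Hamiltonian structure carried by the complex relaxation parameter $\delta_\epsilon$ and to the magnetic coupling. The first step is the energy-dissipation identity: testing the two equations in~\eqref{eq:GPE_mag} against $\partial_t u_\epsilon + \ii \Phi_\epsilon u_\epsilon$ and $\partial_t A_\epsilon + \nabla \Phi_\epsilon$ respectively, using that the imaginary part $\beta_\epsilon$ drops out of the real $L^2$ pairing and that the gauge invariance~\eqref{eq:gauge} lets one insert the $\Phi_\epsilon$-terms for free, one obtains
\begin{align*}
\frac{\dd}{\dd t} GL_\epsilon(u_\epsilon, A_\epsilon; h_{\rm ex}) = - \alpha_\epsilon \norm{\partial_t u_\epsilon + \ii \Phi_\epsilon u_\epsilon}_{L^2}^2 - \sigma_\epsilon \norm{\partial_t A_\epsilon + \nabla \Phi_\epsilon}_{L^2}^2 \leq 0.
\end{align*}
Since the initial data are well-prepared, the energy starts at the critical level $n(\pi \log\tfrac1\epsilon - \gamma_0) + W_\Omega(a,d;h_{\rm ex}) + o(1)$ and can only decrease; combined with the matching lower bound below, this traps the energy at that level for all $t\le T_{\max}$ and leaves no surplus available to split, annihilate, or nucleate vortices, so exactly $n$ vortices of degrees $d_j$ survive.

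Second, I would install the two families of lower bounds that the abstract scheme requires. The $\Gamma$-$\liminf$ bound $GL_\epsilon \geq n(\pi\log\tfrac1\epsilon - \gamma_0) + W_\Omega(a_\epsilon, d; h_{\rm ex}) + o(1)$, with $a_\epsilon$ the approximate vortex locations, follows from the vortex-ball construction (Jerrard, Sandier) together with the explicit expansion~\eqref{eq:renormalized explicit} of $W_\Omega$; this is what makes the excess energy $\Gamma$-converge to the renormalized energy. A second lower bound controls the two dissipation terms from below by the squared vortex velocity. The magnetic field requires separate care: because $\sigma_\epsilon = o(1)$, the potential $A_\epsilon$ relaxes faster than the order parameter, so in the Coulomb gauge it tracks the canonical field generated by the instantaneous configuration $(a_\epsilon, d)$ and the applied field $h_{\rm ex}$, which is precisely the configuration encoded by $R$ and $\Xi$ in~\eqref{eq:Req0}--\eqref{eq:xieq0}; the hypothesis $\sigma_\epsilon \geq \sigma_0/|\log\epsilon|$ keeps the field from lagging too far behind, ensuring the second dissipation term does not contribute at leading order to the motion law while still fixing the effective energy to be $W_\Omega$.

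Finally, the quantitative heart of the argument is the Sandier--Serfaty product estimate, which relates $\int_0^t \norm{\partial_t u_\epsilon + \ii \Phi_\epsilon u_\epsilon}_{L^2}^2$ to the motion of the gauged Jacobian~\eqref{eq:Jacobian} and hence to $|\dot a|^2$, together with the slope bound relating $\norm{\nabla_u GL_\epsilon}_{L^2}$ to $|\nabla_a W_\Omega|$. Chaining these with the energy identity forces all the above inequalities to be saturated in the limit, which identifies the limiting trajectory as the solution of~\eqref{eq:ODE}: the real part $\alpha_0$ of $\delta_\epsilon$ produces the dissipative coefficient, the imaginary part $\beta_0$ produces the gyroscopic term $-d_j\beta_0\mathbb{J}$ (with the precession direction set by the degree $d_j$), the driving force is $-\tfrac1\pi\nabla_{a_j}W_\Omega$, and the scaling $\alpha_\epsilon = \alpha_0/|\log\epsilon|$ is exactly what renders the vortex velocity of order $1$. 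The convergence of the gauged Jacobian stated in item~(i) and the propagation of the energy bound in item~(ii) then follow from the same matching of upper and lower bounds. I expect the principal obstacle to be the coupled control of $A_\epsilon$ and $u_\epsilon$ in the $\sigma_\epsilon=o(1)$ regime---establishing the product estimate in the presence of the gauge field and showing that the Jacobian concentration persists without collisions up to the ODE existence time $T_{\max}$.
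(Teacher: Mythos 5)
This statement is quoted verbatim from Kurzke--Spirn (Theorem~1.1 of \cite{KS11}) and is \emph{not proved in the paper}: it is imported as a black box and used to justify the numerical method. There is therefore no internal proof to compare your proposal against; the relevant comparison is with the argument in the cited reference.

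Measured against that, your outline names the right cast of characters --- energy-dissipation identity, vortex-ball lower bounds, the $\Gamma$-limit identification of the excess energy with $W_\Omega$, and control of $A_\epsilon$ through the Coulomb gauge in the $\sigma_\epsilon = o(1)$ regime --- and your bookkeeping of the scalings (dissipation $\alpha_\epsilon\|\partial_t u_\epsilon + \ii\Phi_\epsilon u_\epsilon\|_{L^2}^2 \sim \pi\alpha_0\int|\dot a|^2$, velocity of order one precisely because $\alpha_\epsilon = \alpha_0/|\log\epsilon|$) is consistent. But there is a genuine gap in the mechanism you propose for identifying the limit dynamics. The Sandier--Serfaty scheme of $\Gamma$-convergence of gradient flows (energy identity $+$ product estimate $+$ slope bound, with all inequalities forced to saturate) identifies the limit as the \emph{gradient flow} of $W_\Omega$; it cannot by itself produce the gyroscopic term $-d_j\beta_0\mathbb{J}$, and it degenerates entirely in the Schr\"odinger case $\alpha_0 = 0$, $\beta_0 > 0$, where the $u$-equation contributes no dissipation and the energy identity gives no information about $\dot a$. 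In \cite{KS11} (following \cite{Spi02,Spi03,KMMS09}) the motion law is instead extracted from exact evolution identities for the gauged Jacobian $J_{A_\epsilon}(u_\epsilon)$ and the associated momentum/stress-energy tensor, with the $\Gamma$-stability (quantitative coercivity of the energy excess around the canonical harmonic map) supplying compactness, the absence of collisions, and the closing of the Gronwall loop. Your plan gestures at ``adapting'' the scheme to the mixed structure, but the adaptation \emph{is} the proof: without the Jacobian evolution identity the argument as written only covers the overdamped case $\beta_0 = 0$.
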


\begin{remark*}[High applied magnetic fields] While in \cite{KS11}, the authors consider $h_{\rm ex} = \mathcal{O}(1)$, in \cite{SS04} the authors consider magnetic fields of order $h_{\rm ex} = \mathcal{O}(|\log \epsilon|)$ in the overdamped regime $\alpha=1$ and $\beta=0$. In this case, however, the magnetic field interaction dominates so that the limiting vortex dynamics has no vortex-vortex interaction.
\end{remark*}

\begin{remark*}[The Schr\"odinger and heat flows]
  Strictly speaking, only the case $\alpha_0, \beta_0>0$ (mixed-flow) is treated in \cite{KS11}. The case $\alpha_0 >0$ and
  $\beta_0=0$ (heat-flow) is addressed in \cite{Spi02}, and the case $\alpha_0 =0$ and
  $\beta_0 >0$ (Schr\"odinger-flow) can be found in \cite{Spi03}.
\end{remark*}

As a corollary of Theorem~\ref{thm:vortices converg}, one can also show that the super-current and the magnetic field converge to their limiting counterparts. This result is not explicitly stated in \cite[Theorem 1.1]{KS11} but follows from their proof, see \cite[Section 2.3]{KS11}.
\begin{corollary}[Convergence of magnetic field] Under the assumptions of Theorem~\ref{thm:vortices converg}, let $A_\epsilon$ be the solution satisfying the Coulomb gauge and $h_\epsilon = \mathrm{curl} \, A_\epsilon$, then we have
\begin{align*}
	\lim_{\epsilon \searrow 0} \norm{A_\epsilon(t) - A_\ast(t)}_{\mL^p(\Omega)} = 0, \quad \mbox{and}\quad \lim_{\epsilon \searrow 0}\norm{h_\epsilon(t) - h_\ast(t)}_{\mL^p(\Omega)} = 0,
\end{align*}
for any $0 <t< T_{\max}$ and $1\leq p <\infty$
where $h_\ast \coloneqq - \Delta \Xi$, $A_\ast \coloneqq \mathrm{curl} \, \Xi$, and $\Xi$ is the solution of~\eqref{eq:xieq0}.
\end{corollary}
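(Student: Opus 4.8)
The plan is to derive a second-order linear elliptic equation for the induced magnetic field $h_\epsilon = \curl A_\epsilon$, to identify its limiting counterpart as the equation solved by $h_\ast = -\Delta\Xi$, and then to pass to the limit using the convergence of the gauged Jacobian from Theorem~\ref{thm:vortices converg} together with standard $\mL^p$ elliptic estimates. The convergence of $A_\epsilon$ will then follow from that of $h_\epsilon$ via a div-curl (stream-function) argument.

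First I would rewrite the second equation in~\eqref{eq:GPE_mag} as $j_{A_\epsilon}(u_\epsilon) = \curl h_\epsilon + \sigma_\epsilon E_\epsilon$, where $E_\epsilon \coloneqq \partial_t A_\epsilon + \nabla\Phi_\epsilon$, and take its scalar curl. Using $\curl\curl h_\epsilon = -\Delta h_\epsilon$ for a scalar field $h_\epsilon$, together with the identity $\curl j_{A_\epsilon}(u_\epsilon) = 2J_{A_\epsilon}(u_\epsilon) - h_\epsilon$ that comes directly from the definition~\eqref{eq:Jacobian} of the gauged Jacobian and from $\curl A_\epsilon = h_\epsilon$, this yields
\begin{align*}
    -\Delta h_\epsilon + h_\epsilon = 2J_{A_\epsilon}(u_\epsilon) - \sigma_\epsilon\curl E_\epsilon \quad \mbox{in $\Omega$}, \qquad h_\epsilon = h_{\rm ex} \quad \mbox{on $\partial\Omega$},
\end{align*}
the boundary condition being the one imposed in Theorem~\ref{thm:vortices converg}. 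On the limiting side, rewriting~\eqref{eq:xieq0} as $\Delta(\Delta\Xi) - \Delta\Xi = 2\pi\sum_{j=1}^n d_j\delta_{a_j}$ and substituting $h_\ast = -\Delta\Xi$ shows that $h_\ast$ solves the London-type equation $-\Delta h_\ast + h_\ast = 2\pi\sum_{j=1}^n d_j\delta_{a_j}$ in $\Omega$, with $h_\ast = h_{\rm ex}$ on $\partial\Omega$ since $\Delta\Xi = -h_{\rm ex}$ on $\partial\Omega$.

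Subtracting, the difference $w_\epsilon \coloneqq h_\epsilon - h_\ast$ solves the homogeneous Dirichlet problem $-\Delta w_\epsilon + w_\epsilon = f_\epsilon$ in $\Omega$ with $w_\epsilon = 0$ on $\partial\Omega$, where $f_\epsilon \coloneqq 2\big(J_{A_\epsilon}(u_\epsilon) - \pi\sum_{j=1}^n d_j\delta_{a_j(t)}\big) - \sigma_\epsilon\curl E_\epsilon$. The first term of $f_\epsilon$ tends to $0$ in $\dot W^{-1,p}(\Omega)$ for every $1\leq p<2$ by the Jacobian convergence of Theorem~\ref{thm:vortices converg}. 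For the second term I would invoke the energy-dissipation identity of the TDGL flow, which together with the propagation of the energy bound (item (ii) of Theorem~\ref{thm:vortices converg}) gives $\int_0^{T}\sigma_\epsilon\norm{E_\epsilon}_{\mL^2}^2\,\mathrm{d}t \leq GL_\epsilon(0) - GL_\epsilon(T) = O(1)$; since $\sigma_\epsilon = o(1)$, this forces $\int_0^T\norm{\sigma_\epsilon E_\epsilon}_{\mL^2}^2\,\mathrm{d}t = \sigma_\epsilon\int_0^T\sigma_\epsilon\norm{E_\epsilon}_{\mL^2}^2\,\mathrm{d}t \to 0$, so that $\sigma_\epsilon E_\epsilon\to 0$ in $\mL^2(\Omega)$ for almost every $t$ and hence $\sigma_\epsilon\curl E_\epsilon\to 0$ in $\dot W^{-1,2}(\Omega)\hookrightarrow\dot W^{-1,p}(\Omega)$. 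The standard $\mL^p$ estimate for $-\Delta+1$ with homogeneous Dirichlet data then yields $\norm{w_\epsilon}_{W^{1,p}}\lesssim\norm{f_\epsilon}_{\dot W^{-1,p}}\to 0$ for all $1\leq p<2$, and the Sobolev embedding $W^{1,p}(\Omega)\hookrightarrow\mL^{2p/(2-p)}(\Omega)$ together with the boundedness of $\Omega$ upgrades this to $h_\epsilon\to h_\ast$ in $\mL^p(\Omega)$ for every $1\leq p<\infty$.

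Finally, for the vector potential I would use that both $A_\epsilon$ and $A_\ast = \curl\Xi$ belong to the Coulomb-gauge space $\mH^1_{\mathrm{div}}(\Omega)$ of~\eqref{eq:minimization spaces}: for $A_\ast$ this follows from $\mathrm{div}\,\curl\Xi = 0$ and from $\Xi|_{\partial\Omega} = 0$, which forces $\nabla\Xi$ to be normal to $\partial\Omega$ and hence $A_\ast\scpr\nu = 0$. Since moreover $\curl(A_\epsilon - A_\ast) = w_\epsilon$, and on a smooth simply connected domain a divergence-free field tangent to the boundary is recovered from its scalar curl through the stream-function estimate $\norm{A_\epsilon - A_\ast}_{W^{1,p}}\lesssim\norm{w_\epsilon}_{\mL^p}$, we conclude $A_\epsilon\to A_\ast$ in $\mL^p(\Omega)$ over the same range of $p$. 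The main obstacle is the control of the dynamic correction $\sigma_\epsilon\curl E_\epsilon$: the dissipation bound is integrated in time and thus only yields convergence for almost every $t$, so upgrading to every $t\in(0,T_{\max})$ requires the uniform-in-time estimates of~\cite[Section 2.3]{KS11} together with the continuity of $t\mapsto(h_\ast(t),A_\ast(t))$ inherited from the $C^1$ trajectory $a(t)$.
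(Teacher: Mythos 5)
The paper itself does not prove this corollary---it defers entirely to \cite[Section 2.3]{KS11}---so your proposal should be measured against the argument there, which runs through the $\Gamma$-stability energy splitting: the energy excess $GL_\epsilon(t) - n(\pi\log\frac1\epsilon-\gamma_0) - W_\Omega(a(t),d;h_{\rm ex})$ controls $\norm{h_\epsilon(t)-h_\ast(t)}_{\mL^2(\Omega)}^2$ from above at each fixed time, so item (ii) of Theorem~\ref{thm:vortices converg} gives the convergence pointwise in $t$ directly. Your route is genuinely different and its deterministic parts are correct: taking the scalar curl of~\eqref{eq:TDmagnetic} and using~\eqref{eq:Jacobian} does give $-\Delta h_\epsilon + h_\epsilon = 2J_{A_\epsilon}(u_\epsilon) - \sigma_\epsilon\curl E_\epsilon$ with $h_\epsilon = h_{\rm ex}$ on $\partial\Omega$; the identification of $-\Delta h_\ast + h_\ast = 2\pi\sum_j d_j\delta_{a_j}$, $h_\ast = h_{\rm ex}$ on $\partial\Omega$, from~\eqref{eq:xieq0} is right; the $\dot W^{-1,p}\to W^{1,p}_0$ elliptic estimate for $1<p<2$ plus Sobolev embedding upgrades to $\mL^q$ for all $q<\infty$ (you never need the delicate $p=1$ endpoint, and note that Theorem~\ref{thm:vortices converg}(i) should read $\dot W^{-1,p}$); and the stream-function recovery of $A_\epsilon-A_\ast$ from $w_\epsilon$ on a simply connected domain is standard.

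The genuine gap is the term $\sigma_\epsilon\curl E_\epsilon$. The dissipation identity gives $\int_0^T\sigma_\epsilon\norm{E_\epsilon}_{\mL^2}^2\,\mathrm{d}t = O(1)$ (using well-preparedness at $t=0$ and item (ii) at $t=T$ to bound the energy drop), hence $\sigma_\epsilon E_\epsilon\to 0$ in $\mL^2((0,T)\times\Omega)$; but this only yields pointwise-in-time convergence for a.e.\ $t$ and along a subsequence of $\epsilon$, while the corollary asserts convergence of the full family at every $t\in(0,T_{\max})$. You flag this honestly, but the proposed repair---importing the ``uniform-in-time estimates of \cite[Section 2.3]{KS11}''---defers exactly the step that constitutes the proof of the corollary in the source, so as written your argument establishes the statement only for a.e.\ $t$. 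The clean way to close it within your framework is to replace the dissipation argument by the pointwise-in-time energy-excess bound of \cite{KS11}: the excess at time $t$ dominates $\norm{h_\epsilon(t)-h_\ast(t)}_{\mL^2(\Omega)}^2$ and vanishes by Theorem~\ref{thm:vortices converg}(ii); your elliptic and div-curl machinery can then be retained to pass from $\mL^2$ to $\mL^p$, $p<\infty$, and from $h_\epsilon$ to $A_\epsilon$.
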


\begin{remark}[Canonical harmonic map and convergence of super current]
  \label{rmk:canon}
  Similarly, it follows from \cite[Theorem 4.1]{KS11} that the super-current of the sequence of solutions $u_\epsilon(t)$ converge to the super-current of the canonical harmonic map with vortex configuration $(a(t),d)$. More precisely, the canonical harmonic map with vortex configuration $(a,d)$ is the unique (up to a constant phase factor) $\mathbb{S}^1$-valued map $u_\ast$ satisfying
\begin{align*}
	\mathrm{curl}\, j(u_\ast) = 2\pi \sum_{j=1}^n d_j \delta_{a_j}, \quad \mathrm{div}\, j(u_\ast) = 0, \quad \mbox{and}\quad \partial_\nu u_\ast = 0 \quad \mbox{on $\partial \Omega$}.
\end{align*}
Thus from \cite[Theorem 4.1, eq. (4.8)]{KS11}, we have
\begin{align*}
	\lim_{\epsilon \searrow 0} \, \norm{j_{A_\epsilon}(u_\epsilon)(t) - j_{A_\ast}(u_\ast)(t)}_{\mL^{4/3}(\Omega)} = 0, \quad \mbox{for any $0<t< T_{\max}$.}
\end{align*}
\end{remark}

\section{The limiting vortex dynamics}
\label{sec:ODE dynamics}

We now describe the procedure to numerically simulate the limiting ODE dynamics
\begin{align}
    \begin{dcases} \left(\alpha_0 - d_j \beta_0 \mathbb{J}\right)\dot{a}(t) = - \frac{1}{\pi} \nabla_{a_j} W_\Omega\left(a(t),d; h_{\rm ex}\right) \quad \mbox{for $t>0$}, \\ a(0) = a, \end{dcases} \label{eq:ODE1}
\end{align}
which is the central step in our method.

The starting point of this procedure is the following explicit formula
for the gradient of $W_\Omega(a,d;h_{\rm ex})$. This formula is not immediate from~\eqref{eq:renormalized explicit} but a rigorous derivation can be found in \cite[Appendix A.2]{Spi03}.

\begin{lemma}[Gradient of renormalized energy] \label{lem:renormalized gradient} Let $a \in \Omega^n_\ast$, $d\in \Z^n$ and $W_{\Omega}(a,d,h_{\rm ex})$ be the renormalized energy defined in~\eqref{eq:renormalized explicit}. Then we have
\begin{align}
    \nabla_{a_k} W_{\Omega}(a,d;h_{\rm ex}) = -2 \pi \sum_{j\neq k} d_j d_k \frac{a_k-a_j}{|a_k-a_j|^2} - 2\pi d_k \nabla R(a_k) - 2\pi d_k \nabla \Xi(a_k), \label{eq:simplified gradient 0}
\end{align}
where $R: \Omega \rightarrow \R$ is the unique solution of
\begin{align}
    \begin{dcases} \Delta R = 0 \quad &\mbox{in $\Omega$,}\\
    R(x) = - \sum_{j=1}^n d_j \log |x-a_j|, \quad &\mbox{for $x\in \partial \Omega$,} \end{dcases}  \label{eq:Req}
\end{align}
and $\Xi: \Omega \rightarrow \R$ is the unique solution of
\begin{align}
\quad \begin{dcases} \Delta^2 \Xi - \Delta \Xi = 2\pi \sum_{j=1}^n d_j \delta_{a_j}, \quad &\mbox{in $\Omega$,}\\
    \Xi = 0, \quad &\mbox{on $\partial \Omega$,}\\
    \Delta \Xi = -h_{\rm ex}, \quad &\mbox{on $\partial \Omega$.}
    \end{dcases} \label{eq:Xieq}
\end{align}
\end{lemma}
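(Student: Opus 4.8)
The plan is to differentiate the explicit expression \eqref{eq:renormalized explicit} directly in $a_k$, keeping careful track of the fact that $R=R_a$ and $\Xi=\Xi_a$ depend on the vortex positions in two distinct ways: \emph{explicitly}, through the evaluation points $a_j$, and \emph{implicitly}, since they solve boundary value problems whose data depends on $a$. The mechanism underlying \eqref{eq:simplified gradient 0} is that in each case the implicit derivative exactly reproduces the explicit one, which is why the prefactors $-\pi$ in \eqref{eq:renormalized explicit} become $-2\pi$ in the gradient. The term $\frac{h_{\rm ex}^2}{2}|\Omega|$ is constant and drops out, while the interaction term is elementary: writing $\sum_{i\neq j} d_i d_j\log|a_i-a_j| = 2\sum_{i<j} d_i d_j\log|a_i-a_j|$ and differentiating gives the first term $-2\pi\sum_{j\neq k} d_j d_k\frac{a_k-a_j}{|a_k-a_j|^2}$.

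For the $R$-term I would pass to the Green's function of the Dirichlet Laplacian, $G(x,y)=\frac{1}{2\pi}\log|x-y|+H(x,y)$. Since $R_a$ is harmonic with boundary data $R_a(x)=-\sum_j d_j\log|x-a_j|$ for $x\in\partial\Omega$, one has $R_a(x)=2\pi\sum_j d_j H(x,a_j)$, where the regular part $H$ is symmetric, $H(x,y)=H(y,x)$, by symmetry of $G$. Differentiating $-\pi\sum_j d_j R_a(a_j)=-2\pi^2\sum_{i,j} d_i d_j H(a_j,a_i)$ in $a_k$ then produces two contributions: differentiating the first argument (the explicit evaluation, the terms with $j=k$) and the second argument (the implicit dependence, the terms with $i=k$). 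Symmetry gives $\nabla_2 H(a_j,a_k)=\nabla_1 H(a_k,a_j)$, so the two coincide and their sum equals $-2\pi d_k\nabla R(a_k)$, the second term of \eqref{eq:simplified gradient 0}.

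The $\Xi$-term is the heart of the argument, both because of the fourth order operator and because of the boundary term $\frac{h_{\rm ex}}{2}\int_{\partial\Omega}\partial_\nu\Xi$. I would split $\Xi_a=\Psi_a+\Theta$, where $\Psi_a$ carries the sources, $(\Delta^2-\Delta)\Psi_a=2\pi\sum_j d_j\delta_{a_j}$ with $\Psi_a=\Delta\Psi_a=0$ on $\partial\Omega$, and $\Theta$ carries the applied field, $(\Delta^2-\Delta)\Theta=0$ with $\Theta=0$ and $\Delta\Theta=-h_{\rm ex}$ on $\partial\Omega$; crucially $\Theta$ does not depend on $a$. Writing $\Psi_a(x)=2\pi\sum_j d_j\Gamma(x,a_j)$ with $\Gamma$ the Green's function of the self-adjoint operator $\Delta^2-\Delta$ under the boundary conditions $u=\Delta u=0$, the symmetry of $\Gamma$ treats the $\Psi_a$-terms exactly as for $R$ and yields $-2\pi d_k\nabla\Psi_a(a_k)$. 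The remaining pieces are $-\pi\sum_j d_j\Theta(a_j)$ and the boundary term, and the key step is to relate them. Applying Green's formula for $\Delta^2-\Delta$ to the pair $\bigl(\Gamma(\cdot,y),\Theta\bigr)$, every boundary contribution vanishes except the one forced by $\Delta\Theta=-h_{\rm ex}$, giving the reciprocity identity $\Theta(y)=-h_{\rm ex}\int_{\partial\Omega}\partial_{\nu_x}\Gamma(x,y)\,\mathrm{d}S_x$. Substituting this shows that the boundary term satisfies $\frac{h_{\rm ex}}{2}\int_{\partial\Omega}\partial_\nu\Psi_a=-\pi\sum_j d_j\Theta(a_j)$ identically in $a$; it therefore simply doubles the evaluation term, and since $\Theta$ is independent of $a$, differentiating $-2\pi\sum_j d_j\Theta(a_j)$ gives $-2\pi d_k\nabla\Theta(a_k)$. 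Adding the $\Psi_a$-contribution yields $-2\pi d_k\nabla(\Psi_a+\Theta)(a_k)=-2\pi d_k\nabla\Xi(a_k)$, which is the last term of \eqref{eq:simplified gradient 0}.

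I expect the main obstacle to be the $\Xi$-term: establishing the reciprocity identity requires carefully verifying that all the boundary terms in Green's formula for the fourth order operator vanish under the mixed conditions $\Xi=0$, $\Delta\Xi=-h_{\rm ex}$, and one must also justify the regularity of $\Gamma$ near the diagonal so that $\Psi_a$, its gradient, and their dependence on $a$ make sense at the vortex points $a_j$. The latter holds because the fundamental solution of $\Delta^2-\Delta$ in two dimensions is continuous (indeed $C^1$): the logarithmic singularities of $\Delta^{-1}$ and $(\Delta-1)^{-1}$ cancel, leaving only a $|x|^2\log|x|$ correction, so all pointwise evaluations and gradients at the $a_j$ are well defined and smooth in $a$.
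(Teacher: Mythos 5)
Your derivation is correct, and it is worth noting that the paper does not actually prove Lemma~\ref{lem:renormalized gradient} at all: it explicitly defers to \cite[Appendix A.2]{Spi03}, remarking only that the formula ``is not immediate'' from~\eqref{eq:renormalized explicit}. So your argument is a genuinely self-contained alternative to the citation. The two mechanisms you isolate are exactly the right ones and both check out. First, writing $R_a(x)=2\pi\sum_j d_j H(x,a_j)$ with $H$ the (symmetric) regular part of the Dirichlet Green function, and $\Psi_a(x)=2\pi\sum_j d_j\Gamma(x,a_j)$ with $\Gamma$ the Green function of $\Delta^2-\Delta$ under Navier conditions $u=\Delta u=0$ (self-adjoint, hence symmetric), the implicit and explicit $a_k$-derivatives coincide and produce the factor $2$ turning $-\pi$ into $-2\pi$; the diagonal terms $H(a_k,a_k)$ and $\Gamma(a_k,a_k)$ are handled consistently because $\tfrac{d}{da_k}H(a_k,a_k)=2\nabla_1H(a_k,a_k)$ and the fundamental solution $-\tfrac{1}{2\pi}(K_0+\log)$ is $C^1$ with vanishing gradient at the origin, so $\nabla\Xi(a_k)$ is exactly the limit the paper itself uses later in Lemma~\ref{lem:simplified gradient}. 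Second, your reciprocity identity $\Theta(y)=-h_{\rm ex}\int_{\partial\Omega}\partial_{\nu_x}\Gamma(x,y)\,\mathrm{d}S_x$ is verified by the Green identity for $\Delta^2-\Delta$: with $\Gamma(\cdot,y)=\Delta\Gamma(\cdot,y)=0$ and $\Theta=0$, $\Delta\Theta=-h_{\rm ex}$ on $\partial\Omega$, the only surviving boundary term is $\int\partial_\nu\Gamma(\cdot,y)\,\Delta\Theta$, which gives precisely $\frac{h_{\rm ex}}{2}\int_{\partial\Omega}\partial_\nu\Psi_a=-\pi\sum_j d_j\Theta(a_j)$ and explains why the flux term doubles the $\Theta$-evaluation term rather than contributing something new. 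The only points you would still need to write out in a fully rigorous version are the existence, symmetry, and joint interior smoothness (off and on the diagonal, for the regular parts) of $H$ and $\Gamma$, and the excision argument justifying Green's formula in the presence of the mild singularity of $\Gamma$; these are standard for the smooth domains assumed in the paper, and you have correctly identified the cancellation of logarithms in $K_0(r)+\log r$ as the reason everything is pointwise well defined at the vortex locations.
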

Note that formula~\eqref{eq:simplified gradient 0} requires, at each time step,
the solution of two linear PDEs: a Laplace problem for the function $R$ defined
in~\eqref{eq:Req0} and a modified biharmonic equation for the function $\Xi$
defined in~\eqref{eq:xieq0}. As thoroughly explained in \cite{CKMS25}, one can
efficiently solve the Laplace problem by projecting the boundary conditions on
the space of harmonic polynomials. However, as shown later in
Lemma~\ref{lem:simplified gradient}, it turns out that we do not need to compute
$R$ to evaluate the gradient of $W_\Omega$. Therefore, let us focus now on the
numerical scheme for solving the modified biharmonic equation~\eqref{eq:Xieq}.

\subsection{The modified biharmonic equation} To solve the modified biharmonic equation~\eqref{eq:Xieq}, we first transform it into a homogeneous equation with nonhomogeneous boundary conditions. For this, let us recall (see, e.g., \cite[Equation 18]{HJ18})
that a fundamental solution of the operator $\Delta^2 - \Delta$ is given by
\begin{align}
    G(x,y) = G_{\Delta-1}(x,y) -G_{\Delta}(x,y)= -\frac{1}{2\pi}(K_0(r) + \log (r)), \label{eq:simply GF}
\end{align}
with $r=|x-y|$ and where $K_0$ is the modified Bessel function of the second kind of order $0$. This can be easily seen from the fact that $\Delta^2 - \Delta = \Delta (\Delta-1) = (\Delta-1)\Delta$ and therefore
\begin{align*}
    -(\Delta^2 - \Delta)\frac{1}{2\pi}(\log(r) +K_0(r)) &= \Delta (\Delta-1) \left(\frac{-K_0(r)}{2\pi}\right) -(\Delta-1)\Delta\left(\frac{1}{2\pi} \log(r)\right) \\
    &=  \Delta (\delta_y) -(\Delta-1)(\delta_y) = \delta_y,
\end{align*}
where we used the fact that $\frac{1}{2\pi} \log(r)$ and $-\frac{1}{2\pi} K_0(r)$ are, respectively, fundamental solutions of $\Delta$ and $\Delta-1$.

With the fundamental solution at hand, we can remove the singular source term in~\eqref{eq:Xieq} at the cost of adding a smooth boundary term. More precisely, if we define
\begin{align}
    \Xi_p(x) \coloneqq 2\pi \sum_{j=1}^n d_j \left(G \ast \delta_{a_j} \right)(x)
    = - \sum_{j=1}^n d_j(K_0 + \log)(|x-a_j|) \quad \mbox{for $x \in \R^2$}, \label{eq:g function}
\end{align}
then the solution of~\eqref{eq:Xieq} is given by $\Xi = \Xi_h + \Xi_p$ where $\Xi_h$ solves
\begin{align}
    \begin{dcases} \Delta^2 \Xi_h- \Delta \Xi_h = 0 \quad  &\mbox{in $\Omega$}, \\
    \Delta \Xi_h = -h_{\rm ex} - \Delta \Xi_p,\quad &\mbox{on $\partial \Omega$,} \\
    \Xi_h = - \Xi_p \quad &\mbox{on $\partial \Omega$.}
    \end{dcases} \label{eq:Weq}
\end{align}
The function $\Xi_h$ can be further decomposed as follows.

\begin{lemma}[Homogeneous solution] \label{lem:W function} Let $\Xi_h$ be the unique solution of~\eqref{eq:Weq} in $\mH^2(\Omega)$, then we have
\begin{align*}
   \Xi_h = F - R + h_{\rm ex}
\end{align*}
where $R$ and $F$ are respectively the unique solutions in $\mH^1(\Omega)$ of~\eqref{eq:Req} and
\begin{subequations}\label{eq:Feq}
\begin{empheq}[left=\empheqlbrace]{align}
  &(\Delta - 1)F = 0,  &\text{in } \Omega, \label{eq:Feq_a}\\
  &F = -h_{\rm ex} + \sum_{j=1}^n d_j K_0(|x - a_j|),  &\text{on } \partial\Omega. \label{eq:Feq_b}
\end{empheq}
\end{subequations}
\end{lemma}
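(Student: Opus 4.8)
The plan is to exhibit the candidate $\tilde{\Xi}_h := F - R + h_{\rm ex}$ and to verify directly that it solves the boundary value problem~\eqref{eq:Weq}; since the lemma already grants uniqueness of the solution in $\mH^2(\Omega)$, this identification suffices. Before checking the equations I would record that the candidate has the required regularity: the boundary data in both~\eqref{eq:Req} and~\eqref{eq:Feq} are smooth, because the vortex locations $a_j$ lie in the interior of $\Omega$, so $K_0(|x-a_j|)$ and $\log|x-a_j|$ are smooth on $\partial\Omega$; elliptic regularity then places $R$, $F$, and hence $\tilde{\Xi}_h$, in $\mH^2(\Omega)$.

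For the interior equation I would exploit the factorization $\Delta^2-\Delta = \Delta(\Delta-1)$ and treat the three summands of $\tilde{\Xi}_h$ separately. The relation $(\Delta-1)F=0$ immediately yields $\Delta(\Delta-1)F=0$; harmonicity of $R$ gives $(\Delta-1)R=-R$ and hence $\Delta(\Delta-1)R = -\Delta R = 0$; and the constant $h_{\rm ex}$ is annihilated by the outer $\Delta$. Summing, $\Delta^2\tilde{\Xi}_h-\Delta\tilde{\Xi}_h=0$ in $\Omega$, which is the first line of~\eqref{eq:Weq}.

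It then remains to check the two boundary conditions, which is where the only genuine bookkeeping lies. The Dirichlet condition is immediate: substituting the boundary traces of $F$ and $R$ and recalling $\Xi_p = -\sum_j d_j(K_0+\log)(|x-a_j|)$ from~\eqref{eq:g function}, the two occurrences of $\pm h_{\rm ex}$ cancel and one reads off $\tilde{\Xi}_h = \sum_j d_j(K_0+\log)(|x-a_j|) = -\Xi_p$ on $\partial\Omega$. For the prescribed value of $\Delta\tilde{\Xi}_h$ I would first use $\Delta F = F$ (valid up to $\partial\Omega$ by continuity of the relation $(\Delta-1)F=0$), together with $\Delta R=0$ and $\Delta h_{\rm ex}=0$, to obtain $\Delta\tilde{\Xi}_h = F = -h_{\rm ex}+\sum_j d_j K_0(|x-a_j|)$ on $\partial\Omega$. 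I expect the step most prone to a sign slip to be matching this against the datum $-h_{\rm ex}-\Delta\Xi_p$: since $K_0(|\cdot - a_j|)$ solves $(\Delta-1)K_0=0$ and $\log|\cdot-a_j|$ is harmonic away from $a_j$ — in particular on $\partial\Omega$, as $a_j\in\Omega$ — one has $\Delta\Xi_p = -\sum_j d_j K_0(|x-a_j|)$ there, so $-h_{\rm ex}-\Delta\Xi_p = -h_{\rm ex}+\sum_j d_j K_0(|x-a_j|)$ indeed agrees with $\Delta\tilde{\Xi}_h$. With all three conditions verified, the stated $\mH^2$-uniqueness identifies $\Xi_h$ with $\tilde{\Xi}_h$, completing the argument.
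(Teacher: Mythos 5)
Your proposal is correct and follows essentially the same route as the paper's proof: define the candidate $\widetilde{\Xi}_h = F - R + h_{\rm ex}$, check the interior equation via the factorization $\Delta^2-\Delta=\Delta(\Delta-1)$, verify both boundary conditions using $\Delta \Xi_p = -\sum_j d_j K_0(|x-a_j|)$ and the explicit traces of $F$ and $R$, and conclude by uniqueness. The only (immaterial) difference is that you compute $\Delta\Xi_p$ pointwise away from the singularities, whereas the paper derives the same identity distributionally from the fundamental solutions $G_\Delta$ and $G_{\Delta-1}$.
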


\begin{remark}[Uniqueness for the modified biharmonic equation in
  $\mH^2(\Omega)$] \label{rem:weak solution} Note that the boundary trace of
  $\Delta \Xi_h$ is a priori not well-defined for merely $\mH^2(\Omega)$
  functions. However, ~\eqref{eq:Weq} can be weakly formulated as follows: find
  $\Xi_h\in \mH^2(\Omega)$ such that $\Xi_h = -\Xi_p$ in $\partial\Omega$ and
\begin{align}
    \int_\Omega \Delta \Xi_h(\Delta - 1) \psi = -\int_{\partial \Omega} \partial_\nu \psi(x) \left(h_{\rm ex} + \Delta \Xi_p(x) \right) \mathscr{H}^1(\mathrm{d} x), \quad \mbox{for any $\psi \in \mH^1_0(\Omega) \cap \mH^2(\Omega)$,} \label{eq:weak Weq}
\end{align}
where $\partial_\nu \psi$ denotes the Neumann trace of $\psi$ along $\partial
\Omega$. The existence and uniqueness of solutions of~\eqref{eq:weak Weq} then
holds in any bounded Lipschitz domain $\Omega$, see
  Appendix~\ref{app:biharm} for more details.
\end{remark}
\begin{proof}
Let $F\in \mH^1(\Omega)$ be a weak solution of~\eqref{eq:Feq} and $R\in \mH^1(\Omega)$ be a weak solution of~\eqref{eq:Req}. Since the boundary functions are smooth on a neighbourhood of the boundary, it follows from standard elliptic regularity that $F, R \in \mH^2(\Omega)$. Thus, let us define $\widetilde{\Xi}_h = F - R + h_{\rm ex} \in \mH^2(\Omega)$. Then in the distributional sense we have
\begin{align*}
    \Delta (\Delta-1) \widetilde{\Xi}_h = \Delta (\Delta -1) F - (\Delta -1) \Delta R = 0 \quad \mbox{in $\Omega$.}
\end{align*}
Since
\begin{align}
    \Delta \widetilde{\Xi}_h = \Delta F = F, \label{eq:traceid}
\end{align}
we have $\Delta \widetilde{\Xi}_h \in \mH^2(\Omega)$ and therefore its boundary values are well-defined. Moreover, since
\begin{align}
    \Delta \Xi_p = \sum_{j=1}^n d_j 2\pi \Delta \left(G_{\Delta-1} - G_\Delta \right) \ast \delta_{a_j} = \sum_{j=1}^n d_j 2\pi G_{\Delta-1} \ast \delta_{a_j} = -\sum_{j=1}^n  d_j K_0(|x-a_j|), \label{eq:g Laplace}
\end{align}
from the boundary condition for $F$, we see that
\begin{align*}
    \Delta \widetilde{\Xi}_h \rvert_{\partial \Omega} = F\rvert_{\partial \Omega} = - h_{\rm ex} + \sum_{j=1}^n d_j K_0(|x-a_j|) = - h_{\rm ex} - \Delta \Xi_p.
\end{align*}
Similarly, since
\begin{align}
    \Xi_p(x) = -\sum_{j=1}^n d_j \left(\log(|x -a_j|) + K_0(|x-a_j|)\right), \label{eq:g expression}
\end{align}
we have
\begin{align*}
    \widetilde{\Xi}_h\rvert_{\partial \Omega} = F\rvert_{\partial \Omega} - R\rvert_{\partial \Omega} + h_{\rm ex} = \sum_{j=1}^n d_j\left( K_0(|x-a_j|) + \log(|x-a_j|)\right) = - \Xi_p.
\end{align*}
Hence, $\widetilde{\Xi}_h$ solves~\eqref{eq:Weq} (and ~\eqref{eq:weak Weq}) and therefore $\widetilde{\Xi}_h = \Xi_h$ by uniqueness of the solution.
\end{proof}

 \subsection{Simplified formulas for gradient, renormalized energy and magnetic field} We now use the decomposition in Lemma~\ref{lem:W function} to simplify some of the equations presented before. These simplified versions are the ones implemented in our algorithm.

 We start with a simplified formula for the gradient of $W_\Omega$. This formula shows that only the function $F$ is necessary to simulate the ODE dynamics.
 \begin{lemma}[Simplified gradient] \label{lem:simplified gradient} Let $a\in \Omega^n_\ast$, $d \in \{\pm 1\}^n$, and $F$ be the solution of~\eqref{eq:Feq}. Then
\begin{align}
    \nabla_{a_k} W_\Omega(a,d;h_{\rm ex}) = -2\pi \sum_{j\neq k} d_j d_k \frac{a_k-a_j}{|a_k-a_j|} K_1(|a_j-a_k|) - 2\pi d_k \nabla F(a_k), \label{eq:simplified gradient 1}
\end{align}
where $K_1$ is the modified Bessel function of the second kind of order $1$.
\end{lemma}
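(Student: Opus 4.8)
The plan is to feed the decomposition of Lemma~\ref{lem:W function} into the gradient formula \eqref{eq:simplified gradient 0} of Lemma~\ref{lem:renormalized gradient} and watch two cancellations take place. Writing $\Xi = \Xi_h + \Xi_p = F - R + h_{\rm ex} + \Xi_p$ and differentiating at the vortex $a_k$ gives $\nabla \Xi(a_k) = \nabla F(a_k) - \nabla R(a_k) + \nabla \Xi_p(a_k)$, where the constant $h_{\rm ex}$ drops out and where $F, R$, solving \eqref{eq:Feq} and \eqref{eq:Req} with smooth boundary data, are smooth near the vortices. Substituting this into \eqref{eq:simplified gradient 0}, the two occurrences of $-2\pi d_k \nabla R(a_k)$ cancel immediately, leaving
\begin{align*}
\nabla_{a_k} W_\Omega(a,d;h_{\rm ex}) = -2\pi \sum_{j\neq k} d_j d_k \frac{a_k - a_j}{|a_k-a_j|^2} - 2\pi d_k \nabla F(a_k) - 2\pi d_k \nabla \Xi_p(a_k).
\end{align*}
Thus the whole lemma reduces to evaluating $\nabla \Xi_p(a_k)$ and showing it converts the residual Coulomb term into the advertised $K_1$ interaction.

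Next I would justify that $\nabla \Xi_p(a_k)$ is a genuine classical gradient and compute it. From \eqref{eq:g expression} we have $\Xi_p(x) = -\sum_j d_j (K_0 + \log)(|x-a_j|)$, and the small-argument expansion $K_0(r) = -\log(r/2) - \gamma_{\mathrm E} + o(1)$ (with $\gamma_{\mathrm E}$ the Euler--Mascheroni constant) shows that $(K_0+\log)(r)$ extends continuously, in fact $C^1$, across $r=0$; hence $\Xi_p$, and therefore $\Xi$, is $C^1$ in a neighbourhood of each $a_k$, so every gradient above makes sense pointwise. Differentiating a single summand, using $K_0' = -K_1$,
\begin{align*}
\nabla_x (K_0+\log)(|x-a_j|) = \Big(-K_1(|x-a_j|) + \tfrac{1}{|x-a_j|}\Big)\frac{x-a_j}{|x-a_j|}.
\end{align*}
For the self-interaction term $j=k$ the radial factor $-K_1(r) + 1/r$ tends to $0$ as $r \to 0$, because $K_1(r) = 1/r + O(r\log r)$; hence that summand contributes nothing to $\nabla \Xi_p(a_k)$. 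This is the one spot where the Bessel asymptotics must be invoked carefully, and it is the main (though mild) technical point of the argument.

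Finally I would collect the $j\neq k$ terms. Evaluating at $x=a_k$ gives
\begin{align*}
\nabla \Xi_p(a_k) = \sum_{j\neq k} d_j K_1(|a_k-a_j|)\frac{a_k-a_j}{|a_k-a_j|} - \sum_{j\neq k} d_j \frac{a_k-a_j}{|a_k-a_j|^2},
\end{align*}
so that $-2\pi d_k \nabla \Xi_p(a_k)$ produces a term $+2\pi \sum_{j\neq k} d_j d_k \frac{a_k-a_j}{|a_k-a_j|^2}$ that cancels the leftover Coulomb term in the displayed formula above, together with the genuinely new term $-2\pi\sum_{j\neq k} d_j d_k K_1(|a_k-a_j|)\frac{a_k-a_j}{|a_k-a_j|}$. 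What survives is exactly \eqref{eq:simplified gradient 1}. Beyond the regularity observation at the vortices, the only items to verify are the sign convention $K_0'=-K_1$ and the leading small-$r$ behaviour of $K_0$ and $K_1$; everything else is routine bookkeeping.
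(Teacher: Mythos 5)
Your proposal is correct and follows essentially the same route as the paper: plug the decomposition $\Xi = \Xi_p + F - R + h_{\rm ex}$ from Lemma~\ref{lem:W function} into~\eqref{eq:simplified gradient 0}, cancel the two $\nabla R(a_k)$ terms, and use $K_0' = -K_1$ together with $K_1(r) - 1/r \to 0$ to drop the self-interaction and convert the Coulomb term into the $K_1$ interaction. The only difference is cosmetic (you make the $C^1$ extension of $K_0 + \log$ across $r=0$ explicit, where the paper just takes the limit), and all signs and cancellations check out.
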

\begin{proof} Recalling~\eqref{eq:g expression}, we see that
\begin{align*}
    \nabla \Xi_p(x) =  -\sum_{j=1}^n d_j  \frac{x-a_j}{|x-a_j|}\left(\frac{1}{|x-a_j|} + \dot{K}_0(|x-a_j|)\right)  , \quad \mbox{for $x \not \in a$.}
\end{align*}
Next, note that by well-known identities for Bessel functions (see, e.g., \cite[Section 8.446 and equation 18 in 8.486]{GR07})
\begin{align*}
    \dot{K}_0(r) = -K_1(r), \quad \mbox{and}\quad \lim_{r \searrow 0} K_1(r) - \frac{1}{r} = 0.
\end{align*}
Therefore,
\begin{align}
    \nabla \Xi_p(a_k) &= -\lim_{x\rightarrow a_k} \sum_{j =1 }^n d_j \frac{x-a_j}{|x-a_j|}\left( \frac{1}{|x-a_j|} - K_1(|x-a_j|)\right)\nonumber \\
    &= - \sum_{j\neq k} d_j \frac{a_k-a_j}{|a_k-a_j|}\left(\frac{1}{|a_k-a_j|} - K_1(|a_k-a_j|)\right). \label{eq:middle}
\end{align}
As the solution of~\eqref{eq:Xieq} is given by $\Xi = \Xi_p  + \Xi_h$, where $\Xi_h = F - R + h_{\rm ex}$ by Lemma~\ref{lem:W function}, we conclude from~\eqref{eq:simplified gradient 0} and~\eqref{eq:middle} that
\begin{align*}
    \nabla_{a_k} W_{\Omega}(a,d;h_{\rm ex}) &= - 2\pi \sum_{j\neq k} d_j d_k  \frac{a_k-a_j}{|a_k-a_j|^2} - 2\pi d_k \nabla \Xi_p(a_k) - 2\pi d_k \nabla \Xi_h(a_k) - 2\pi d_k \nabla R(a_k),\\
    &= -2\pi \sum_{j\neq k} d_j d_k \frac{a_k-a_j}{|a_k-a_j|} K_1(|a_k-a_j|) - 2\pi d_k \nabla F(a_k).
\end{align*}
\end{proof}

Similarly, we can show that only the function $F$ is necessary to evaluate the renormalized energy.
\begin{lemma}[Renormalized energy] Let $(a,d) \in \Omega_\ast^n \times \Z^d$ and $F$ be the solution of~\eqref{eq:Feq}, then the renormalized energy of $(a,d)$ can be written as
\begin{multline}
    W_\Omega(a,d;h_{\rm ex}) = \pi \sum_{i \neq j} d_i d_j K_0(|a_i-a_j|) - \pi \sum_{j=1}^n d_j \left(F(a_j)+h_{\rm ex} - d_j(\log(2) - \gamma)\right) + \frac{h_{\rm ex}^2}{2} |\Omega| \\
    +\frac{h_{\rm ex}}{2} \int_\Omega \left(F(x) - \sum_{j=1}^n d_j K_0(|x-a_j|)\right) \mathrm{d} x,
\end{multline}
where $K_0$ is the modified Bessel function and $\gamma \approx 0.57721$ is the Euler-Mascheroni constant.
\end{lemma}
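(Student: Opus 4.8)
\emph{The plan is to} start from the explicit expression~\eqref{eq:renormalized explicit} for $W_\Omega$ and rewrite each of its terms using the decomposition $\Xi = \Xi_p + \Xi_h$, together with $\Xi_h = F - R + h_{\rm ex}$ from Lemma~\ref{lem:W function} and the explicit form of $\Xi_p$ in~\eqref{eq:g expression}. The crucial preliminary observation is that, although $\Xi$ solves a distributional equation with point sources, it is in fact continuous at each $a_j$: the fundamental solution $G = -\tfrac{1}{2\pi}(K_0 + \log)$ combines the two logarithmic singularities of $K_0(r)$ and $\log r$, which cancel, and by the standard expansion $K_0(r) = -\log r + \log 2 - \gamma + o(1)$ as $r\to 0$ one has $\lim_{r\searrow 0}(\log r + K_0(r)) = \log 2 - \gamma$. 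Hence $\Xi_p$, and therefore $\Xi$, extends continuously to all of $\Omega$ and $\Xi(a_j)$ is unambiguously defined.

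\emph{Next I would} evaluate $\Xi(a_j)$. Using~\eqref{eq:g expression}, the diagonal ($i=j$) term contributes the finite value $-d_j(\log 2 - \gamma)$, while the off-diagonal terms give $-\sum_{i\neq j} d_i(\log|a_i - a_j| + K_0(|a_i-a_j|))$. The key algebraic simplification is that $R(a_j) + \Xi_h(a_j) = R(a_j) + F(a_j) - R(a_j) + h_{\rm ex} = F(a_j) + h_{\rm ex}$, so that $R$ cancels cleanly and $R(a_j) + \Xi(a_j) = F(a_j) + h_{\rm ex} + \Xi_p(a_j)$. Substituting into $-\pi\sum_j d_j(R(a_j)+\Xi(a_j))$ then produces $-\pi\sum_j d_j(F(a_j)+h_{\rm ex})$, the diagonal constant $\pi\sum_j d_j^2(\log 2 - \gamma)$, and the point-interaction sum $\pi\sum_{i\neq j} d_i d_j(\log|a_i-a_j| + K_0(|a_i-a_j|))$. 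Combining the logarithmic part of this last sum with the term $-\pi\sum_{i\neq j} d_i d_j \log|a_i-a_j|$ already present in~\eqref{eq:renormalized explicit} makes the $\log$ cancel and leaves precisely $\pi\sum_{i\neq j} d_i d_j K_0(|a_i-a_j|)$, the first term of the target; the remaining contributions reassemble into $-\pi\sum_j d_j(F(a_j) + h_{\rm ex} - d_j(\log 2 - \gamma))$.

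\emph{It remains to} convert the boundary term $\tfrac{h_{\rm ex}}{2}\int_{\partial\Omega}\partial_\nu\Xi$ into the stated volume integral. For this I would compute $\Delta\Xi = \Delta\Xi_p + \Delta\Xi_h = -\sum_j d_j K_0(|x-a_j|) + F$, combining~\eqref{eq:g Laplace} with the identity $\Delta\Xi_h = F$ from~\eqref{eq:traceid}. Since $\Xi \in \mH^2(\Omega)$, its gradient $\nabla\Xi \in \mH^1(\Omega;\R^2)$ admits a boundary trace, so the Gauss--Green formula gives $\int_{\partial\Omega}\partial_\nu\Xi\,\mathcal{H}^1(\mathrm{d}x) = \int_\Omega \Delta\Xi\,\mathrm{d}x = \int_\Omega\big(F - \sum_j d_j K_0(|x-a_j|)\big)\,\mathrm{d}x$, which is exactly the required term (note $K_0(|\cdot-a_j|)\in \mL^p$ for all $p<\infty$, so the right-hand side is finite). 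If one prefers to argue pointwise around the singularities, one excises balls $B_\rho(a_j)$ and checks that the contributions on $\partial B_\rho(a_j)$ vanish as $\rho\searrow 0$, using $\nabla(\log r + K_0(r)) = \tfrac{x-a_j}{r}\big(\tfrac{1}{r} - K_1(r)\big)$ together with $K_1(r) - \tfrac1r \to 0$, so that $\nabla\Xi$ stays bounded near each vortex.

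\emph{The main obstacle} is the careful bookkeeping of the singular and regular parts: one must recognize that $\Xi$ is in fact continuous at the vortices (so that $\Xi(a_j)$ is meaningful), correctly isolate the finite diagonal constant $\log 2 - \gamma$ coming from $\log r + K_0(r)$, and verify the clean cancellation of the $R$-contributions. Once these points and the two Bessel asymptotics are in place, the remainder is a direct substitution followed by the divergence theorem.
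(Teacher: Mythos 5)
Your proposal is correct and follows essentially the same route as the paper: substitute $\Xi + R = F + \Xi_p + h_{\rm ex}$ into~\eqref{eq:renormalized explicit}, evaluate $\Xi_p(a_j)$ via the asymptotics $K_0(r) = -\log r + \log 2 - \gamma + o(1)$, and convert the boundary term with the divergence theorem using $\Delta\Xi = F - \sum_j d_j K_0(|\cdot - a_j|)$. The paper states these identities and calls the rest a straightforward calculation; you have simply carried out the bookkeeping (continuity of $\Xi_p$ at the vortices, cancellation of the $\log$ and $R$ terms, integrability of $K_0$) in full, which is a faithful expansion of the same argument.
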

\begin{proof} The proof follows from a straigthforward calculation by plugging the following identities in~\eqref{eq:renormalized explicit}: $\Xi +R = F + \Xi_p + h_{\rm ex}$,
\begin{align*}
    \int_{\partial \Omega} \partial_\nu \Xi  = \int_{\Omega} \Delta \Xi = \int_{\Omega} F+ \Delta \Xi_p,
\end{align*}
equation~\eqref{eq:g Laplace}, and
\begin{align*}
    \Xi_p(a_j) = -\lim_{ x\ra a_j} \sum_{i=1} d_i (K_0+\log)(x) = - d_i \left(\log(2) - \gamma\right) - \sum_{i \neq j} d_i (K_0+\log)(|a_i-a_j|),
\end{align*}
where the last identity follows from the well-known asymptotics $K_0(r) = -\log(r) + \log(2) - \gamma +  o(1)$ as $r\searrow 0$ (see \cite[Section 8.443]{GR07}).
\end{proof}

Lastly, we also obtain a simple formula for the magnetic field.
\begin{lemma}[Magnetic field]\label{lem:simplified magnetic field} Let $\Xi$ be the solution of~\eqref{eq:Xieq} for some $(a,d) \in \Omega_\ast^n \times \Z^n$, then the magnetic field $h_\ast = -\Delta \Xi$ is given by
\begin{align*}
    h_\ast (x) = -F(x) +\sum_{j=1}^n d_j K_0(|x-a_j|),
\end{align*}
where $F$ is the solution of~\eqref{eq:Feq}.\end{lemma}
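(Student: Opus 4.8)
The plan is to exploit the additive decomposition of $\Xi$ established just before Lemma~\ref{lem:W function} and to take its Laplacian term by term. Recall that the solution of~\eqref{eq:Xieq} was written as $\Xi = \Xi_p + \Xi_h$, where $\Xi_p$ is the explicit particular solution in~\eqref{eq:g expression} and, by Lemma~\ref{lem:W function}, $\Xi_h = F - R + h_{\rm ex}$ with $F$ solving~\eqref{eq:Feq} and $R$ solving~\eqref{eq:Req}. Since $h_\ast = -\Delta \Xi$ by definition, it suffices to identify $\Delta \Xi$.

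First I would compute $\Delta \Xi_h$. Because $R$ is harmonic in $\Omega$ and $h_{\rm ex}$ is a constant, neither contributes, whereas the defining relation $(\Delta - 1)F = 0$ in~\eqref{eq:Feq_a} gives $\Delta F = F$; hence $\Delta \Xi_h = F$ in $\Omega$. Next I would simply invoke~\eqref{eq:g Laplace}, which already records that $\Delta \Xi_p = -\sum_{j=1}^n d_j K_0(|x-a_j|)$. Adding the two contributions yields $\Delta \Xi = F - \sum_{j=1}^n d_j K_0(|x-a_j|)$, and negating produces the claimed identity $h_\ast = -\Delta \Xi = -F + \sum_{j=1}^n d_j K_0(|x-a_j|)$.

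The only point requiring genuine care is the distributional meaning of $\Delta \Xi_p$. The particular solution $\Xi_p$ superposes a $\log$ and a $K_0$ singularity at each $a_j$, and a priori each such term produces a Dirac mass under the Laplacian. The content of~\eqref{eq:g Laplace} is precisely that these two singular contributions cancel: using that $\tfrac{1}{2\pi}\log r$ and $-\tfrac{1}{2\pi}K_0(r)$ are fundamental solutions of $\Delta$ and $\Delta - 1$ respectively, the delta terms coming from $\log$ and from $K_0$ are equal and opposite, leaving only the smooth remainder $-\sum_{j=1}^n d_j K_0(|x-a_j|)$ with no residual point masses. This is consistent with $-h_\ast = \Delta \Xi$ being a genuine function (away from the $a_j$) rather than a measure. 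With this cancellation already in place, the computation is immediate, and I expect no substantive obstacle beyond correctly bookkeeping these singular contributions.
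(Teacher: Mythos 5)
Your proof is correct and follows exactly the paper's argument: the paper likewise deduces the formula immediately from the identities $\Xi = \Xi_p + F - R + h_{\rm ex}$, $\Delta R = 0$, $\Delta F = F$, and~\eqref{eq:g Laplace}. Your additional remark on the cancellation of the Dirac masses in $\Delta \Xi_p$ is a correct reading of the content of~\eqref{eq:g Laplace} and does not change the argument.
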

\begin{proof} This is immediate from the identities $\Xi = \Xi_p + F- R + h_{\rm ex}$, $\Delta R = 0$, $\Delta F = F$, and~\eqref{eq:g Laplace}.
\end{proof}

\subsection{Discretization of the modified biharmonic equation}
\label{ssec:discr_biharmonic}

As previously shown, to compute the solution of the modified biharmonic equation~\eqref{eq:Xieq}, it suffices to solve the harmonic equation~\eqref{eq:Req} for $R$ and the modified Helmholtz equation~\eqref{eq:Feq} for $F$. To this end, we shall use the following functions:
\begin{align}
    P_j(r,\theta) \coloneqq r^{|j|} \mathrm{exp}(\ii j \theta)\quad \mbox{and}\quad Q_j(r,\theta) \coloneqq I_{|j|}(r) \mathrm{exp}(\ii j \theta), \quad j \in \Z, \label{eq:Q functions}
\end{align}
where $I_{|\alpha|}$ denotes the modified Bessel function of the first kind of
order $|\alpha|$, and $(r,\theta)$ denote the polar coordinates centered at a
suitable point $x_0 \in \Omega$. More precisely, we use the harmonic polynomials
$\{P_j\}_{j\in\Z}$ to approximate solutions of $R$ and the Bessel functions
$\{Q_j\}_{j\in\Z}$ to approximate solutions for $F$.

Note that, from the defining equation for the Bessel function and the polar coordinate representation of the Laplacian, i.e.,
\begin{align*}
    r^2 \partial_r^2 I_{\alpha} + r \partial_r I_\alpha -(r^2 + \alpha^2) I_\alpha = 0\quad \mbox{and}\quad \Delta = \partial_r^2 + \frac{1}{r} \partial_r + \frac{1}{r^2} \partial_\theta^2,
\end{align*}
it is easy to see that each $Q_j$ solves the modified Helmholtz equation $\Delta Q_j - Q_j = 0$ in the whole~$\R^2$. Similarly, $P_j$ is harmonic in $\R^2$. In particular, the restrictions $P_j \rvert_{\Omega}$ and $Q_j \rvert_{\Omega}$ to any open subset $\Omega$ are classical solutions of the respective equations in $\Omega$. We can therefore project the boundary conditions on the spaces spanned by those functions to compute approximations for $R$ and $F$.

For this, we shall work with the $\mL^2(\Omega)$-orthogonal projections. More
precisely, for a given discretization parameter $m \in \N$, we denote by
$\mathbb{P}_m$ and $\mathbb{Q}_m$ respectively the $\mL^2(\partial
\Omega)$-orthogonal projection on the spaces
\begin{align}
    H_m \coloneqq \mathrm{span} \{ P_j \rvert_{\partial \Omega} : |j| \leq m\} \quad \mbox{and}\quad B_m \coloneqq \mathrm{span} \{ Q_j \rvert_{\partial \Omega} : |j| \leq m\}
\end{align}
For sufficiently regular domains (e.g. with piecewise smooth boundaries), such projections can be evaluated to numerical accuracy via suitable quadrature rules along the boundary $\partial \Omega$. Our numerical approximations $R_m$ and $F_m$ then correspond to the natural extension of the projected functions, i.e.,
\begin{align}
    R_m(x) \coloneqq \sum_{|j| \leq m} a_j^{(m)} P_j(x) \quad\mbox{and}\quad F_m(x) \coloneqq \sum_{|j|\leq m} b_j^{(m)} Q_j(x), \quad \mbox{for $x\in \Omega$,}\label{eq:approximate solutions}
\end{align}
with the coefficients $a_j^{(m)},b_j^{(m)}$ determined through the $\mL^2(\partial \Omega)$-orthogonal projections $\mathbb{P}_m,\mathbb{Q}_m$ onto
span$\{P_j\}_{|j| \leq m}$ and span$\{Q_j\}_{|j| \leq m}$ respectively:
\begin{align}
    &\sum_{|j|\leq m} a_j^{(m)} P_j \rvert_{\partial \Omega} =
    \mathbb{P}_m\left(-\sum_{j=1}^n d_j \log |\cdot-a_j|\right)\\
    &\sum_{|j| \leq m} b_j^{(m)} Q_j\rvert_{\partial \Omega} =\mathbb{Q}_m
    \left(-h_{\rm ex} + \sum_{j=1}^n d_j K_0(|\cdot-a_j|)\right).  \label{eq:coefficients}
\end{align}

It should be noted that the coefficients $\{a_j^{(m)}, b_j^{(m)}\}_{|j|\leq m}$
may depend on $m$ as the boundary restrictions of $\{P_j\}_{|j|\leq m}$ or $\{Q_j\}_{|j|\leq m}$ are, in
general, not $\mL^2(\partial \Omega)$-orthogonal. Moreover, a priori, it is not
clear that the functions $F_m$ and $R_m$ are able to approximate $F$ and $R$
with arbitrary accuracy or that the coefficients can be uniquely determined.
However, the following result shows that this is indeed the case.
\begin{theorem}[Basis property]\label{thm:basis} Let $\Omega \subset \R^2$ be a
  bounded simply connected domain with Lipschitz boundary. Then the families
  $\{P_j\rvert_{\partial \Omega}\}_{j \in \Z}$ (resp. $\{Q_j
  \rvert_{\partial \Omega}\}_{j \in \Z}$) centered at any point $x_0 \in \Omega$ are linearly independent and their spans are dense in $\mH^{\frac12}(\partial \Omega)$.
\end{theorem}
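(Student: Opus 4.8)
The plan is to dispatch linear independence directly and to prove density by a duality argument based on single‑layer potentials. For \emph{linear independence}, suppose a finite combination has vanishing trace on $\partial\Omega$. In the harmonic case $u=\sum_{|j|\le m}c_jP_j\in\mH^1(\Omega)$ is harmonic with zero Dirichlet trace, so $\int_\Omega|\nabla u|^2=0$ and $u\equiv0$ in $\Omega$; being a polynomial it then vanishes on all of $\R^2$, and the orthogonality of the angular modes $e^{\ii j\theta}$ forces every $c_j=0$. In the modified‑Helmholtz case the same energy argument with the coercive form $\int_\Omega(|\nabla v|^2+v^2)$ gives $v=\sum_{|j|\le m}c_jQ_j\equiv0$ in $\Omega$; since $\Delta-1$ is elliptic with constant coefficients, $v$ is real‑analytic and hence vanishes on $\R^2$, and $I_{|j|}(r)>0$ for $r>0$ yields $c_j=0$.

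For \emph{density}, $\mH^{\frac12}(\partial\Omega)$ is a Hilbert space with dual $\mH^{-\frac12}(\partial\Omega)$, so by Hahn--Banach it suffices to show that any $\mu\in\mH^{-\frac12}(\partial\Omega)$ annihilating all $P_j\rvert_{\partial\Omega}$ (resp.\ all $Q_j\rvert_{\partial\Omega}$) must be zero. Given such $\mu$, I form the single‑layer potential $U(x)=\langle\mu_y,\Phi(x-y)\rangle$ for $x\notin\partial\Omega$, with $\Phi$ the fundamental solution $\tfrac{1}{2\pi}\log|\cdot|$ of $\Delta$ in the $P$‑case and $-\tfrac{1}{2\pi}K_0(|\cdot|)$ of $\Delta-1$ in the $Q$‑case (see~\eqref{eq:simply GF}). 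Setting $R_{\max}=\max_{y\in\partial\Omega}|y-x_0|$ and using polar coordinates at $x_0$, I invoke the multipole expansion of $\log|x-y|$ (resp.\ Graf's addition theorem for $K_0(|x-y|)$), valid for $|y-x_0|<|x-x_0|$. In either expansion each term, regarded as a function of $y$, lies in the span of the relevant family $\{P_j\rvert_{\partial\Omega}\}$ (resp.\ $\{Q_j\rvert_{\partial\Omega}\}$), so every partial sum does too; moreover the series converges uniformly with all derivatives on $\partial\Omega$, hence in $\mH^{\frac12}(\partial\Omega)$, whenever $|x-x_0|>R_{\max}$. Thus $y\mapsto\Phi(x-y)$ lies in the $\mH^{\frac12}$‑closure of the span, and the continuity of $\mu$ together with the annihilation hypothesis gives $U(x)=0$ for all $|x-x_0|>R_{\max}$.

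It remains to propagate this vanishing. The potential $U$ satisfies the homogeneous equation ($\Delta U=0$, resp.\ $(\Delta-1)U=0$) on the connected exterior $\R^2\setminus\overline\Omega$ and vanishes on the nonempty open set $\{|x-x_0|>R_{\max}\}$; by real‑analyticity of solutions and unique continuation, $U\equiv0$ throughout $\R^2\setminus\overline\Omega$. Since the single‑layer potential has continuous trace across $\partial\Omega$, its interior Dirichlet data vanish as well, and interior uniqueness (the energy argument from the first step) gives $U\equiv0$ in $\Omega$ too. The conormal jump relation $\partial_\nu^+U-\partial_\nu^-U=\mu$ then forces $\mu=0$, which is the desired density.

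The part I expect to require the most care is the layer‑potential machinery on a merely Lipschitz boundary: the $\mH^{-\frac12}\!\to\mH^1_{\mathrm{loc}}$ mapping property, the continuity of the single‑layer trace, and the conormal jump relation must be read in the weak $\mH^{\pm\frac12}$ sense, which I would justify by citing the theory of boundary integral operators on Lipschitz domains (e.g.\ Costabel and McLean). The second delicate point—passing from the annihilation of each individual $P_j$ (resp.\ $Q_j$) to the vanishing of $U$—is handled not by interchanging an infinite sum with the $\mu$‑pairing but by the geometric, hence $\mH^{\frac12}(\partial\Omega)$‑convergent, multipole and Graf expansions, so that $\Phi(x-\cdot)$ is genuinely an $\mH^{\frac12}$‑limit of admissible finite combinations.
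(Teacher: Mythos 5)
Your proof is correct, but it takes a genuinely different route from the paper's for the density part. The linear independence argument is essentially the one in the paper (coercivity of the Dirichlet form forces the combination to vanish in $\Omega$, then Fourier orthogonality of the angular modes on a circle kills the coefficients; the paper restricts to a small circle $\partial B_\delta(x_0)\subset\Omega$ where you instead extend to $\R^2$ by analyticity first). For density, the paper argues directly: it solves $Lu=0$ in $\Omega$ with trace $g$, invokes a Runge approximation property (Theorem 3.9 of Kravchenko--Vicente-Ben\'itez) to approximate $u$ in $\mH^1(\Omega)$ by a solution $u'$ on a large ball $B_R(x_0)\supset\supset\Omega$, expands $u'$ in Fourier modes on $\partial B_R(x_0)$ (which are exactly the $P_j$, resp.\ $Q_j$, up to nonzero radial factors), and transfers the error to $\mH^{\frac12}(\partial\Omega)$ via elliptic regularity and the trace theorem. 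You instead prove the dual statement: an annihilating functional $\mu\in\mH^{-\frac12}(\partial\Omega)$ generates a single-layer potential that vanishes outside a large disk (by the multipole/Graf expansions of the fundamental solution, whose terms lie in the span), hence everywhere by unique continuation, trace continuity, interior uniqueness, and the conormal jump relation. Both are sound. The paper's route is shorter because the hard analytic content (which is itself usually proved by a duality-plus-unique-continuation argument of exactly your flavor) is outsourced to the cited Runge lemma; your route is self-contained modulo the classical layer-potential theory on Lipschitz domains (mapping into $\mH^1_{\mathrm{loc}}$, trace continuity, jump relations \`a la Costabel/McLean), which you correctly flag as the delicate ingredient, and it has the additional merit of making explicit where the expansions converge ($|x-x_0|>R_{\max}$) and why the simple connectedness of $\Omega$ is needed (connectedness of the exterior for the unique continuation step, mirroring the paper's requirement that $B_R(x_0)\setminus\overline{\Omega}$ be connected).
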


The key ingredient in the proof of Theorem~\ref{thm:basis} is the following Runge approximation property, whose proof can be found in \cite[Section 3]{KV22}.

\begin{lemma}[Theorem 3.9 in \cite{KV22}] \label{thm:runge} Let $L = \Delta$ or $L= \Delta -1$, then for any Lipschitz open sets $\Omega \subset \subset \Omega'$ such that $\Omega'\setminus \overline{\Omega}$ is connected, any weak solution $u \in \mH^1(\Omega)$ of $L u = 0$ in $\Omega$, and any $\epsilon>0$, there exists a weak solution $u' \in \mH^1(\Omega')$ of $Lu' = 0$ in $\Omega'$ such that
\begin{align*}
    \norm{u - u'}_{\mH^1(\Omega)} < \epsilon.
\end{align*}
\end{lemma}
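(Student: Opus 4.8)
The plan is to establish the statement by the classical Lax--Malgrange duality argument, reducing the Runge approximation property to a unique continuation statement for the constant-coefficient elliptic operator $L \in \{\Delta, \Delta-1\}$. Fix the open sets $\Omega \subset\subset \Omega'$ with $\Omega' \setminus \overline{\Omega}$ connected, and set $S(\Omega) \coloneqq \{u \in \mH^1(\Omega) : Lu = 0 \text{ in } \Omega\}$ together with $V \coloneqq \{u'|_\Omega : u' \in S(\Omega')\} \subseteq S(\Omega)$. Both are subspaces of the Hilbert space $\mH^1(\Omega)$, and $S(\Omega)$ is closed. By Hahn--Banach, to prove that $V$ is dense in $S(\Omega)$ it suffices to show that every bounded functional $\ell \in \mH^1(\Omega)^\ast$ annihilating $V$ also annihilates $S(\Omega)$.

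First I would represent $\ell$ by a pair $(f_0, f_1) \in \mL^2(\Omega) \times \mL^2(\Omega;\R^2)$ via $\ell(v) = \int_\Omega (f_0 v + f_1 \scpr \nabla v)$, extend $(f_0,f_1)$ by zero to $\Omega'$, and form the distribution $T \coloneqq f_0 - \dv f_1$, supported in $\overline{\Omega}$. Since $L$ is self-adjoint and its Dirichlet form is coercive on $\mH^1_0(\Omega')$ (by the Poincaré inequality on the bounded set $\Omega'$ when $L = \Delta$, and directly when $L = \Delta-1$), Lax--Milgram produces a unique $w \in \mH^1_0(\Omega')$ with $Lw = T$ in $\Omega'$. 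In particular $Lw = 0$ on the annular region $U \coloneqq \Omega' \setminus \overline{\Omega}$, so $w$ is $L$-harmonic there; by interior elliptic regularity with constant coefficients it is real-analytic in $U$, and $w = 0$ on $\partial\Omega'$ because $w \in \mH^1_0(\Omega')$.

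Next I would exploit the hypothesis $\ell|_V = 0$: for every $u' \in S(\Omega')$ one has $\langle T, u'\rangle = \ell(u'|_\Omega) = 0$. Testing $Lw = T$ against such $u'$ via Green's second identity on $\Omega'$ --- using $Lu' = 0$ and the vanishing Dirichlet trace of $w$ --- collapses everything to the boundary pairing $\int_{\partial\Omega'} u'\,\partial_\nu w = 0$. Because the Dirichlet problem for $L$ on $\Omega'$ is well-posed, the traces $u'|_{\partial\Omega'}$ exhaust $\mH^{1/2}(\partial\Omega')$, whence $\partial_\nu w = 0$ on $\partial\Omega'$ as an element of $\mH^{-1/2}(\partial\Omega')$. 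Thus $w$ has vanishing Cauchy data on the outer boundary $\partial\Omega'$ of the connected set $U$.

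The decisive step is then unique continuation: the real-analytic, $L$-harmonic function $w$ has vanishing Cauchy data on the relatively open smooth portion $\partial\Omega'$ of $\partial U$, so by Holmgren's theorem it vanishes near $\partial\Omega'$, and the connectedness of $U$ together with the weak unique continuation property propagates $w \equiv 0$ throughout all of $U$. Consequently $w$ vanishes in a one-sided collar outside $\overline{\Omega}$, and a final transpose computation --- writing $\ell(u) = \langle Lw, u\rangle$ for $u \in S(\Omega)$ and moving $L$ back onto $u$ across the collar where $w$ vanishes --- yields $\ell(u) = 0$, closing the Hahn--Banach argument. I expect the main obstacle to be precisely this propagation-and-closure step: connectedness of $\Omega' \setminus \overline{\Omega}$ must be used in an essential way, since a component not meeting $\partial\Omega'$ would receive no seed of vanishing Cauchy data, and the Green and transpose identities must be interpreted weakly at the merely Lipschitz boundaries, where $w$ need not belong to $\mH^2(\Omega')$ and the normal-derivative pairings live in $\mH^{-1/2}$. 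The two operators $L = \Delta$ and $L = \Delta-1$ are handled uniformly, the only difference being the coercivity input that guarantees solvability on $\Omega'$.
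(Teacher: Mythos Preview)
The paper does not supply its own proof of this lemma; it simply quotes the result from \cite[Theorem~3.9]{KV22}. So there is nothing in-paper to compare against, and your Hahn--Banach/unique-continuation argument is precisely the classical Lax--Malgrange route that underlies such Runge-type statements.

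Your outline is essentially correct, but there is one genuine imprecision. You invoke Holmgren's theorem on ``the relatively open \emph{smooth} portion $\partial\Omega'$ of $\partial U$''; however, the hypothesis only gives a \emph{Lipschitz} boundary, and classical Holmgren requires at least a $C^1$ non-characteristic initial surface. Since you yourself flag the Lipschitz boundary as the main obstacle a few lines later, this is an internal inconsistency rather than a conceptual error, but as written the unique-continuation step does not go through. The standard repair bypasses Holmgren entirely: once you have established $w\in\mH^1_0(\Omega')$ together with $\partial_\nu w=0$ in $\mH^{-1/2}(\partial\Omega')$, the zero extension $\widetilde w\in\mH^1(\R^2)$ satisfies $L\widetilde w=T$ \emph{distributionally on all of $\R^2$} (the vanishing of both traces is exactly what kills the single- and double-layer contributions on $\partial\Omega'$). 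Consequently $L\widetilde w=0$ on $\R^2\setminus\overline{\Omega}$, interior elliptic regularity makes $\widetilde w$ real-analytic there, and it vanishes on the nonempty open set $\R^2\setminus\overline{\Omega'}$. Because $\Omega'\setminus\overline{\Omega}$ is connected and, for a Lipschitz $\Omega'$, every point of $\partial\Omega'$ is accessible from both sides, the connected component of $\R^2\setminus\overline{\Omega}$ containing $\Omega'\setminus\overline{\Omega}$ also meets $\R^2\setminus\overline{\Omega'}$; real-analytic continuation then forces $w\equiv 0$ on $\Omega'\setminus\overline{\Omega}$. This yields $w|_\Omega\in\mH^1_0(\Omega)$, after which your closing identity $\ell(u)=-a_\Omega(w,u)=-a_\Omega(u,w)=0$ for $u\in S(\Omega)$ is rigorous without any $\mH^2$ regularity.
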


\begin{proof}[Proof of Theorem~\ref{thm:basis}] As the proof is essentially the
  same for both families of functions, let us focus only on $\{Q_j\}_{j\in\Z}$. We first
  show that the functions $\{Q_j\rvert_{\partial \Omega}\}_{j \in \Z}$ are
  linearly independent. For this, suppose that $Q_j = \sum_{k \neq j} \alpha_k
  Q_k$ in $\partial \Omega$ for finitely many non-zero $\alpha_k$. Then $Q
  \coloneqq Q_j - \sum_{k\neq j} \alpha_k Q_k$ solves $\Delta Q - Q = 0$ and $Q
  \in \mH^1_0(\Omega)$. As this operator is strictly negative, we must have $Q
  =0$ a.e. in $\Omega$. However, this is not possible for the following reason.
  As the $I_\alpha(r)$ are analytic functions for $r>0$, for any finite index
  set $\mathcal{I} \subset \N$, we can find a $\delta >0$ small enough such that
  $I_k(\delta) \neq 0$ for any $k \in \mathcal{I}$. Consequently, the functions
  $Q_k\rvert_{\partial B_\delta(x_0)} = I_{|k|}(\delta) \mathrm{exp}(\ii k
  \theta)$ are linearly independent along $\partial B_\delta(x_0)$, which is
  contained in $\Omega$ for $\delta$ small enough. Note that this argument shows
  that $I_{|k|}(r) \neq 0$ for any $r>0$.

We now prove the density result, namely, that any $g \in \mH^\frac12(\partial \Omega)$ can be well approximated by linear combinations of $\{Q_j\rvert_{\partial \Omega}\}_{j\in\Z}$. Let $u$ be a solution to $Lu=0$ in $\Omega$, with $u=g\in H^{1/2}(\partial\Omega)$ on $\partial \Omega$.
Let $R>0$ be so large that $\Omega \subset\subset  B_R(x_0)$ and $B_R(x_0) \setminus \overline{\Omega}$ is connected (which is possible because $\Omega$ is simply connected and bounded). Then, note that by Lemma~\ref{thm:runge}, for any $\epsilon>0$ we can find $u'\in \mH^1(B_R(x_0))$ such that $\Delta u'- u' =0$ in $B_R(x_0)$ and
\begin{align}
    \norm{u'-u}_{\mH^1(\Omega)} <\epsilon \label{eq:epsilon close}
\end{align}
We can now approximate $u'$ in $\mH^1(B_R(x_0))$. More precisely, since $Q_j \rvert_{B_R(x_0)} = I_{|j|}(R) \mathrm{exp}(\ii j \theta)$ are nothing but Fourier modes up to a \emph{nonzero} factor (by the previous argument), from standard Fourier theory we can find finitely many $\alpha_k \in \C$ such that $v \coloneqq \sum_{k} \alpha_k Q_k$ satisfies
\begin{align*}
    \norm{v-u'}_{\mH^{\frac12}(\partial B_R(x_0))}< \epsilon.
\end{align*}
As $v-u'$ is a $\mH^1(\Omega)$ weak solution of $(\Delta -1) (v-u') = 0$ in $B_R(x_0)$, it follows from standard elliptic regularity that
\begin{align}
    \norm{v-u'}_{\mH^1(B_R(x_0))} \lesssim_R \norm{v-u'}_{\mH^{\frac12}(\partial B_R(x_0))} <  \epsilon.  \label{eq:epsilon close 2}
\end{align}
Hence, from the standard Sobolev trace theorem and estimates~\eqref{eq:epsilon close} and~\eqref{eq:epsilon close 2}, we have
\begin{align}
    \norm{v-u}_{\mH^{\frac12}(\partial \Omega)} \lesssim_\Omega \norm{v-u}_{\mH^1(\Omega)} \lesssim \norm{u-u'}_{\mH^1(\Omega)} + \norm{u'-v}_{\mH^1(B_R(x_0))} \lesssim \epsilon,
\end{align}
which completes the proof.
\end{proof}

As a corollary of Theorem~\ref{thm:basis}, we see that the numerical approximations converge to the exact solutions in the limit $m\rightarrow \infty$. This provides a theoretical justification for our approach.
\begin{corollary}[Convergence in the large $m$ limit] \label{cor:discrete approximation}
For any bounded open set $\Omega$ with Lipschitz boundary, the numerical approximations $F_m$ and $R_m$ defined in~\eqref{eq:approximate solutions} satisfy
\begin{align}
    \lim_{m\ra \infty} \norm{R-R_m}_{\mH^{\frac12}(\Omega)} = \lim_{m \rightarrow \infty} \norm{F-F_m}_{\mH^{\frac12}(\Omega)} = 0, \label{eq:H12 convergence}
\end{align}
where $F$ and $R$ are the exact solutions of~\eqref{eq:Feq} and~\eqref{eq:Req}.
\end{corollary}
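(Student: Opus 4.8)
The goal is to show that the finite-dimensional approximations $F_m$ and $R_m$, constructed by $\mL^2(\partial\Omega)$-projecting the boundary data onto the span of boundary traces of $\{Q_j\}_{|j|\le m}$ and $\{P_j\}_{|j|\le m}$, converge to the exact solutions $F$ and $R$ in the $\mH^{1/2}(\Omega)$ norm. Since the argument is identical for both families, I would treat only $F$ (the $F_m$ case), with $R$ following mutatis mutandis.

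**The plan:**

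The key observation is that the $\mL^2(\partial\Omega)$-orthogonal projection $\mathbb{Q}_m$ onto $B_m$ is, by definition, the \emph{best} $\mL^2(\partial\Omega)$-approximation of the boundary data $g \coloneqq -h_{\rm ex} + \sum_j d_j K_0(|\cdot - a_j|)$ from within $B_m$. I would like to leverage density (Theorem~\ref{thm:basis}) together with the quasi-optimality of Galerkin-type projections, but density is stated in $\mH^{1/2}(\partial\Omega)$ whereas the projection is orthogonal in $\mL^2(\partial\Omega)$. The cleanest route is therefore as follows. First, recall that the boundary trace $F|_{\partial\Omega} = g$ lies in $\mH^{1/2}(\partial\Omega)$ (in fact it is smooth, since the $a_j$ are interior points). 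By Theorem~\ref{thm:basis}, for any $\eta>0$ there exists a finite linear combination $\tilde v = \sum_{|j|\le M} c_j Q_j|_{\partial\Omega}$ with $\norm{g - \tilde v}_{\mH^{1/2}(\partial\Omega)} < \eta$; for $m \ge M$ this $\tilde v$ lies in $B_m$, so in particular it is an admissible competitor.

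**Transferring from $\mL^2$ to $\mH^{1/2}$:**

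The main technical obstacle is that the projection $\mathbb{Q}_m g$ minimizes the $\mL^2(\partial\Omega)$-error, not the $\mH^{1/2}(\partial\Omega)$-error, so the density statement alone does not immediately bound $\norm{g - \mathbb{Q}_m g}_{\mH^{1/2}(\partial\Omega)}$. I would resolve this by noting that $\mL^2$ convergence of $\mathbb{Q}_m g$ to $g$ is immediate from density (the spans are dense in $\mH^{1/2}$, hence a fortiori dense in $\mL^2(\partial\Omega)$, so the orthogonal projections converge in $\mL^2$). To upgrade to $\mH^{1/2}$-convergence of the extensions, I would work directly with the solid quantity: set $e_m \coloneqq F - F_m$. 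Both $F$ and $F_m$ solve $(\Delta - 1)w = 0$ in $\Omega$ (the $Q_j$ are exact solutions in all of $\R^2$), so $e_m$ is itself a weak solution of the modified Helmholtz equation in $\Omega$ with boundary data $g - \mathbb{Q}_m g$. By the standard well-posedness and elliptic lifting estimate for $(\Delta-1)$ on Lipschitz domains,
\begin{align*}
    \norm{F - F_m}_{\mH^{1/2}(\Omega)} \;\lesssim\; \norm{e_m}_{\mH^1(\Omega)} \;\lesssim_\Omega\; \norm{g - \mathbb{Q}_m g}_{\mH^{1/2}(\partial\Omega)}.
\end{align*}
Thus it suffices to show the right-hand side vanishes as $m\to\infty$.

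**Closing the argument:**

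To control $\norm{g - \mathbb{Q}_m g}_{\mH^{1/2}(\partial\Omega)}$, I would combine the density competitor $\tilde v$ with a duality/interpolation argument, or argue more robustly as follows. Since $\mathbb{Q}_m$ is the $\mL^2(\partial\Omega)$-projection onto $B_m$, and since $\tilde v \in B_m$ for $m\ge M$, the projection is at least as good as $\tilde v$ in the $\mL^2$ sense; however, to get the $\mH^{1/2}$ estimate I would instead invoke that $B_m$ is an increasing, nested, dense family and that $\mathbb{Q}_m g \to g$ in $\mL^2$ together with a uniform $\mH^{1/2}$-bound on the interpolants. The most transparent fix, which I expect to be the cleanest to write, is to replace the orthogonal projection in the analysis by the near-best density approximant $\tilde v$: its extension $V \coloneqq \sum_{|j|\le M} c_j Q_j$ is a solution of the modified Helmholtz equation, and the boundary-to-solution lifting gives $\norm{F - V}_{\mH^{1/2}(\Omega)} \lesssim_\Omega \norm{g - \tilde v}_{\mH^{1/2}(\partial\Omega)} < C\eta$. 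Since $V \in \operatorname{span}\{Q_j\}_{|j|\le m}$ is one admissible approximation and $F_m$ is the one determined by the $\mL^2(\partial\Omega)$-projection, the final step is a quasi-optimality comparison showing the $\mH^{1/2}(\Omega)$-error of $F_m$ is controlled (up to constants) by that of the best competitor $V$. I expect the genuine obstacle to be precisely this last comparison — reconciling the $\mL^2(\partial\Omega)$-optimality of the projection with the desired $\mH^{1/2}(\Omega)$ convergence — which is overcome by passing through the $\mH^{1/2}(\partial\Omega)$ density of Theorem~\ref{thm:basis} and the boundedness of the solution operator for $(\Delta-1)$ from $\mH^{1/2}(\partial\Omega)$ into $\mH^{1}(\Omega)$.
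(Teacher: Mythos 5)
Your proposal correctly isolates the crux --- the mismatch between the $\mL^2(\partial\Omega)$-optimality of the projection and the desired $\mH^{\frac12}(\Omega)$ convergence --- but it does not actually close that gap, and the route you sketch would fail. Your chain of estimates reduces everything to showing $\norm{g - \mathbb{Q}_m g}_{\mH^{\frac12}(\partial\Omega)} \to 0$, and the only tools you bring to bear are (i) the existence of a good competitor $\tilde v \in B_m$ in the $\mH^{\frac12}(\partial\Omega)$ norm, from Theorem~\ref{thm:basis}, and (ii) a hoped-for ``quasi-optimality comparison'' of the $\mL^2$-projection against that competitor in the stronger norm. No such quasi-optimality holds in general: an $\mL^2(\partial\Omega)$-orthogonal projection onto a finite-dimensional space can be arbitrarily bad in $\mH^{\frac12}(\partial\Omega)$ unless one has an inverse inequality on $B_m$ whose constants are beaten by the best-approximation error, and nothing of the sort is established (or plausibly available) for this Bessel-function basis on a general Lipschitz boundary. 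Replacing $F_m$ by the extension of $\tilde v$ in the analysis does not help either, since the corollary is a statement about the actual $F_m$ defined by the $\mL^2$ projection.

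The paper closes the gap by going in the opposite direction: it never upgrades the boundary convergence beyond $\mL^2(\partial\Omega)$ (which is immediate, as you note, from density of the spans in $\mL^2(\partial\Omega)$ together with orthogonality of the projection). Instead it invokes the Dahlberg/Jerison--Kenig estimate for the Dirichlet problem on Lipschitz domains,
\begin{align*}
\norm{u}_{\mH^{\frac12}(\Omega)} \lesssim \norm{g}_{\mL^2(\partial\Omega)},
\end{align*}
valid for the harmonic function $u$ with non-tangential boundary values $g$; this gains half a derivative over the standard trace/lifting estimate you use and converts $\mL^2(\partial\Omega)$ convergence of the boundary data directly into $\mH^{\frac12}(\Omega)$ convergence of the solutions. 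The operator $\Delta - 1$ is then handled by a simple perturbation: write the solution as $u' + w$, where $u'$ is the harmonic function with the same boundary data and $w \in \mH^1_0(\Omega)$ solves $\Delta w - w = u'$ with $\norm{w}_{\mH^1(\Omega)} \lesssim \norm{u'}_{\mL^2(\Omega)}$. Without this estimate (or an equivalent substitute), your argument is incomplete at precisely the step you yourself flag as the genuine obstacle.
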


\begin{proof}
As $\mH^{\frac12}(\partial \Omega)$ is densely embedded in $\mL^2(\partial
\Omega)$, the span of the sets $\{Q_j\rvert_{\partial \Omega}\}_{j\in\Z}$ or
$\{P_j\rvert_{\partial \Omega}\}_{j\in\Z}$ are dense in $\mL^2(\partial \Omega)$.
Consequently, we have
\begin{align*}
     \lim_{m \ra \infty} \norm{R- R_m}_{\mL^2(\partial \Omega)} = \lim_{m \ra \infty} \norm{F - F_m}_{\mL^2(\partial \Omega)} = 0.
\end{align*}
The convergence for $R$ in~\eqref{eq:H12 convergence} now follows from the
following deep estimate (see \cite{Dah77,Dah79}  or \cite[Theorems 5.2--5.4 and
Corollary 5.5.]{JK95})
\begin{align*}
    \norm{u}_{\mH^{\frac12}(\Omega)} \lesssim \norm{g}_{\mL^2(\partial \Omega)},
\end{align*}
for the unique harmonic function $u \in \mH^{\frac12}(\Omega)$ satisfying $u = g$ on $\partial \Omega$ (in the non-tangential sense). For the convergence of $F_m$ we need a similar estimate for solutions of $\Delta u - u= 0$. This estimate can be established via a perturbative argument. Precisely, let $u' \in \mH^{\frac12}(\Omega)$ be the unique harmonic function satisfying $u' = g\in \mL^2(\partial \Omega)$ in the non-tangential sense, then from the classical elliptic theory we can find a unique $w\in \mH^1_0(\Omega)$ such that $\Delta w - w = u'$ and $\norm{w}_{\mH^1(\Omega)} \lesssim \norm{u'}_{\mL^2(\Omega)}$. It then follows that $u \coloneqq u' + w$ solves
\begin{align*}
   \Delta u - u =0,\quad u=g \quad \mbox{on $\partial \Omega$ and satisfies} \quad \norm{u}_{\mH^{\frac12}(\Omega)} \lesssim \norm{g}_{\mL^2(\partial \Omega)}.
\end{align*}
\end{proof}

\begin{remark}[Pointwise convergence] Standard elliptic regularity theory (see,
  e.g. \cite{GT01}) shows that estimates~\eqref{eq:H12 convergence} implies interior
  pointwise $C^k$ convergence, i.e.,
\begin{align*}
     \lim_{m \ra \infty} \norm{R -R_m}_{C^k(V)} = \lim_{m \ra \infty} \norm{F-F_m}_{C^k(V)} = 0.
\end{align*}
for any $V$ compactly contained in $\Omega$ and $k\in \N$. In particular, this shows that the gradient of the renormalized energy can be exactly evaluated in the large $m$-limit.
\end{remark}

\begin{remark}[Explicit estimate vs general domains] In the case of a disk
  $\Omega = B_r(x_0)$, the boundary restrictions of $P_j$ and $Q_j$ are nothing
  but the Fourier modes $\mathrm{exp}(\ii j \theta)$. In particular, in this
  case one can obtain explicit convergence and error estimates, as done in
  \cite{CKMS25}. Nevertheless, we emphasize that the previous theorem justifies
  the method for a very general class of domains $\Omega$. \label{rmk:fourier}
\end{remark}

\subsection{Numerical simulation of the limiting ODE}\label{ssec:HD_sim}

Now that we have introduced a novel discretization method to solve the modified
biharmonic equation required to compute the gradient of the renormalized energy,
we end this section by presenting a few trajectories for various initial
conditions and external magnetic fields $h_{\rm ex}$.

The computational domain is the unit disk $\Omega=B_1(0)$,
with different initial conditions $a^0$ and $d$ for each
case. For the unit disk, the projection operators $\mathbb{P}_m$ and
$\mathbb{Q}_m$ introduced in Section~\ref{ssec:discr_biharmonic} take simpler
forms since, in this case, they both coincide with truncated Fourier series, as
mentioned in Remark~\ref{rmk:fourier}. Solving the modified Helmholtz equation
\eqref{eq:Feq} can thus be done as follows:
\begin{enumerate}
  \item Choose a maximum degree $m$, and fix it once and for all. Then,
    for any $g\in L^2(\partial\Omega)$,
    \[
      (\mathbb{Q}_m g)(e^{\ii \theta}) = \sum_{k=-m}^m \widehat{g}(k) e^{\ii k\theta},
      \quad\text{where}\quad
      \widehat{g}(k) = \frac1{2\pi} \int_0^{2\pi} e^{-\ii
        k\theta}g(e^{\ii \theta}){\rm d}\theta.
    \]
  \item Compute the Fourier coefficients $(\widehat{g}_{a}(k))_{-m\leq k \leq m}$
    of the Dirichlet boundary condition in \eqref{eq:Feq}
    \[[0,2\pi) \ni \theta \mapsto g_a(e^{\ii \theta}) \coloneqq - h_{\rm ex} +
      \sum_{j=1}^n d_jK_0(|(\cos \theta,\sin \theta)-a_j|)\] up to order $m$, for instance
    using a Fast Fourier Transform (FFT).
  \item Compute the (approximate) solution $F_m$ to \eqref{eq:Feq} by expanding
    $\mathbb{Q}_mg_a$ inside the disc:
    \begin{equation*}
      \forall\ r \in [0,1),\ \forall\ \theta\in[0,2\pi), \quad
      F_m(re^{\ii \theta}) = \sum_{j=-m}^m I_{|j|}(r)\widehat{g}_{a}(k)e^{\ii k\theta},
    \end{equation*}
    which is a solution of \eqref{eq:Feq_a} satisfying~\eqref{eq:Feq_b} approximately.
\end{enumerate}

We are now capable of simulating some trajectories. In the limiting ODE
\eqref{eq:ODE1}, we fix $\alpha_0 = 0$, $\beta_0 = 1$ so that we are in the case
of the Schrödinger flow. This ODE is solved with a 4th-order Runge--Kutta method
(RK4, see {\emph{e.g.}} \cite[Chapter II]{HWN93}) with time step $\delta
t=10^{-3}$ up to some time $T$. The gradient of the renormalized energy is
computed with the simplified formulation~\eqref{eq:simplified gradient 1}, where
$F$ solves \eqref{eq:Feq_a}-\eqref{eq:Feq_b} and is approximated with $m=64$ Fourier modes on the
boundary $\partial\Omega$. We consider 4 different initial conditions, each of
them being used twice with $h_{\rm ex}=3$ and $h_{\rm ex} = -2$. Note that
all the trajectories were obtained in a few seconds on a personal laptop: a
significant improvement when compared to the computational cost of the complete
PDE \eqref{eq:GPE_mag} for small $\epsilon$.

Such trajectories can already be analyzed from a physics perspective: in the case of vortices with positive degree, adding a
positive magnetic field $h_{\rm ex} > 0$ brings an additional counterclockwise
rotation speed to the vortices (as can be seen from Case 1 without magnetic
field and Case 1 with $h_{\rm ex} = 3$ where the vortices span a shorter
distance within the same time), while adding a negative magnetic field $h_{\rm
  ex} < 0$ brings an additional clockwise rotation speed (as can be seen from
Case 2 without magnetic field and Case 2 with $h_{\rm ex} = -2$).

\begin{table}[h!]
  \caption{Initial configurations for simulation of the Hamiltonian dynamics.}
  \centering
  \footnotesize
  \begin{tabular}{cccc}
    \toprule
    \textbf{Case}       & $n$ & $a^0$                                & $d$                                          \\
    \hline
    \textbf{Case $1$} & $2$          & $\left((-0.5,0.0), (0.5,0.0)\right)$                          & $(1,1)$                                                \\
    \textbf{Case $2$} & $3$          & $\left((-0.5,0.0), (0.5,0.0), (0.0,0.0)\right)$               & $(1,1,-1)$                                             \\
    \textbf{Case $3$} & $2$          & $\left((-0.75,0.0), (0.75,0.0)\right)$                        & $(-1,1)$                                               \\
    \textbf{Case $4$} & $4$          & $\left((-0.6,-0.6), (-0.6,0.6), (0.6,0.6), (0.6,-0.6)\right)$ & $(1,-1, 1, -1)$                                        \\
    \bottomrule
  \end{tabular}
\end{table}

\begin{figure}[p!]
  \includegraphics[width=0.32\linewidth]{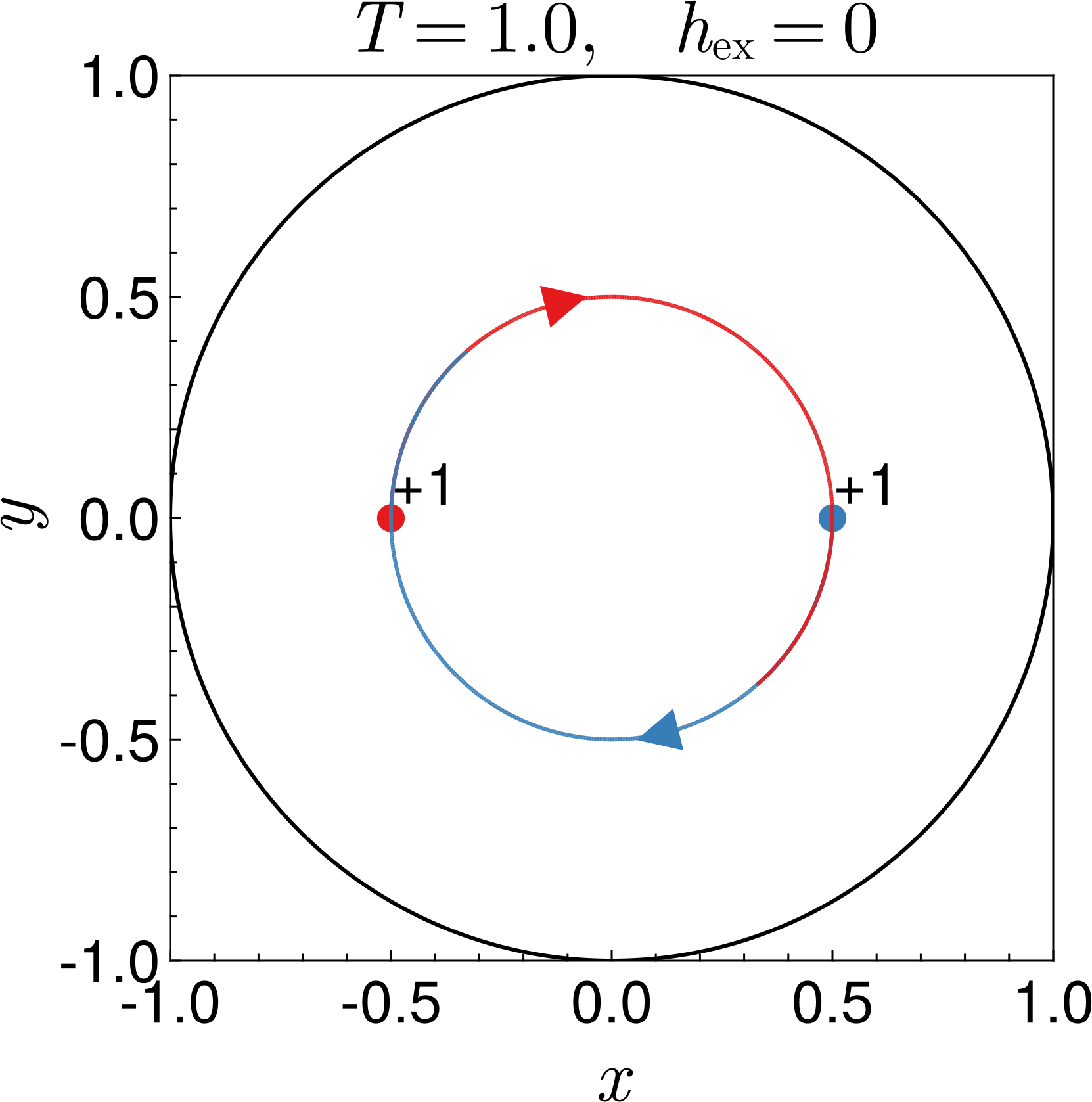}\hfill
  \includegraphics[width=0.32\linewidth]{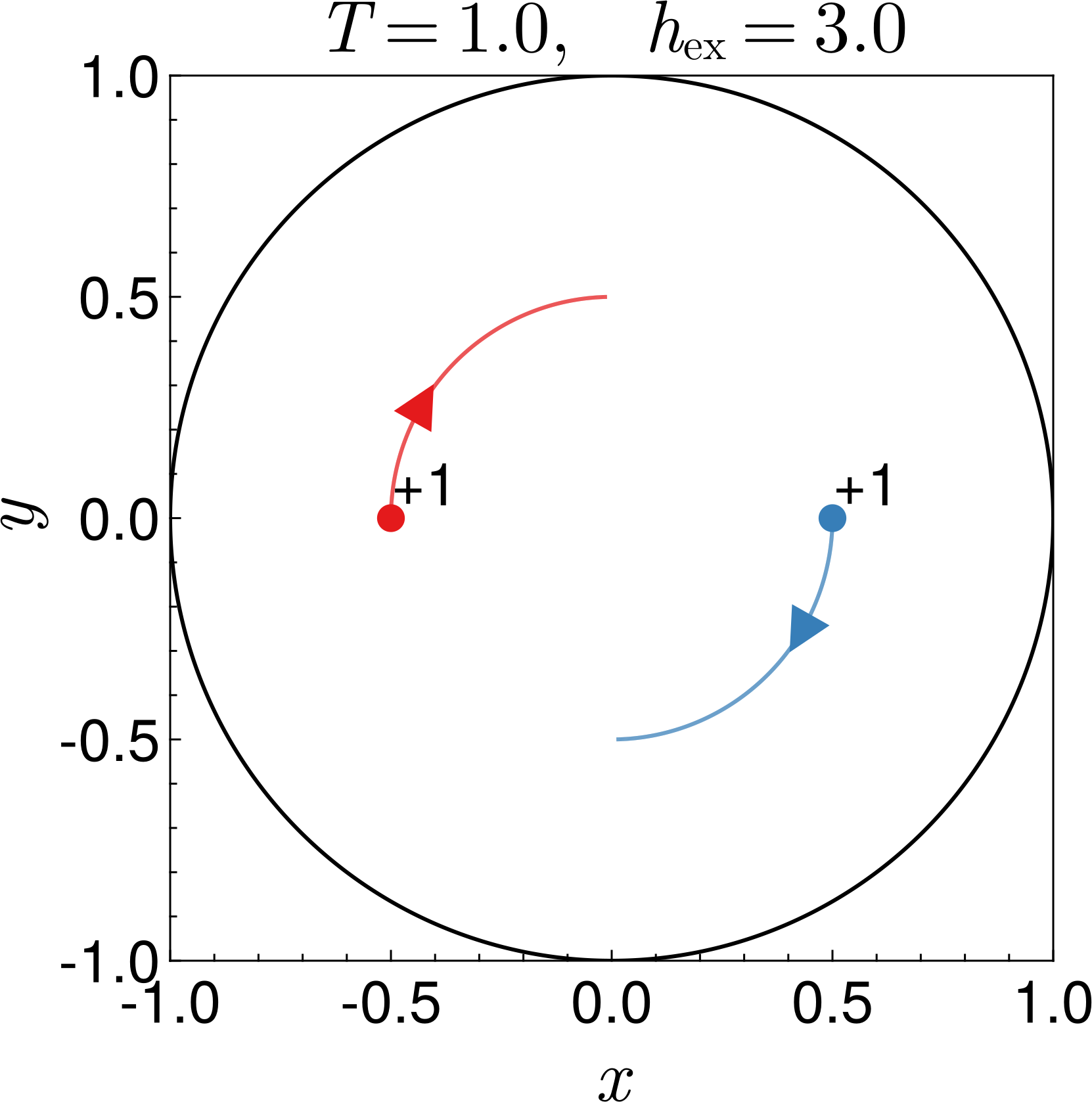}\hfill
  \includegraphics[width=0.32\linewidth]{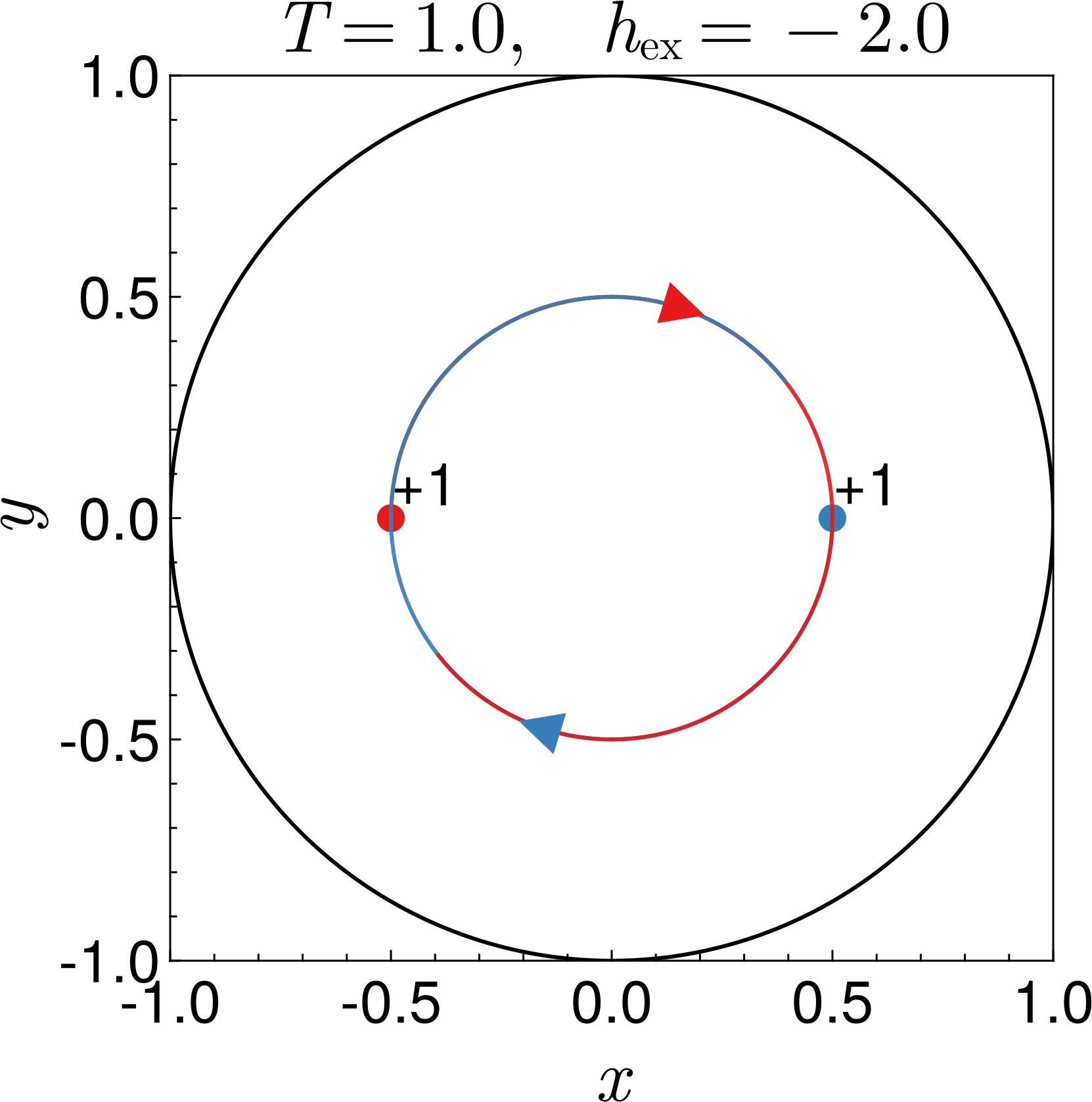}\\
  \includegraphics[width=0.32\linewidth]{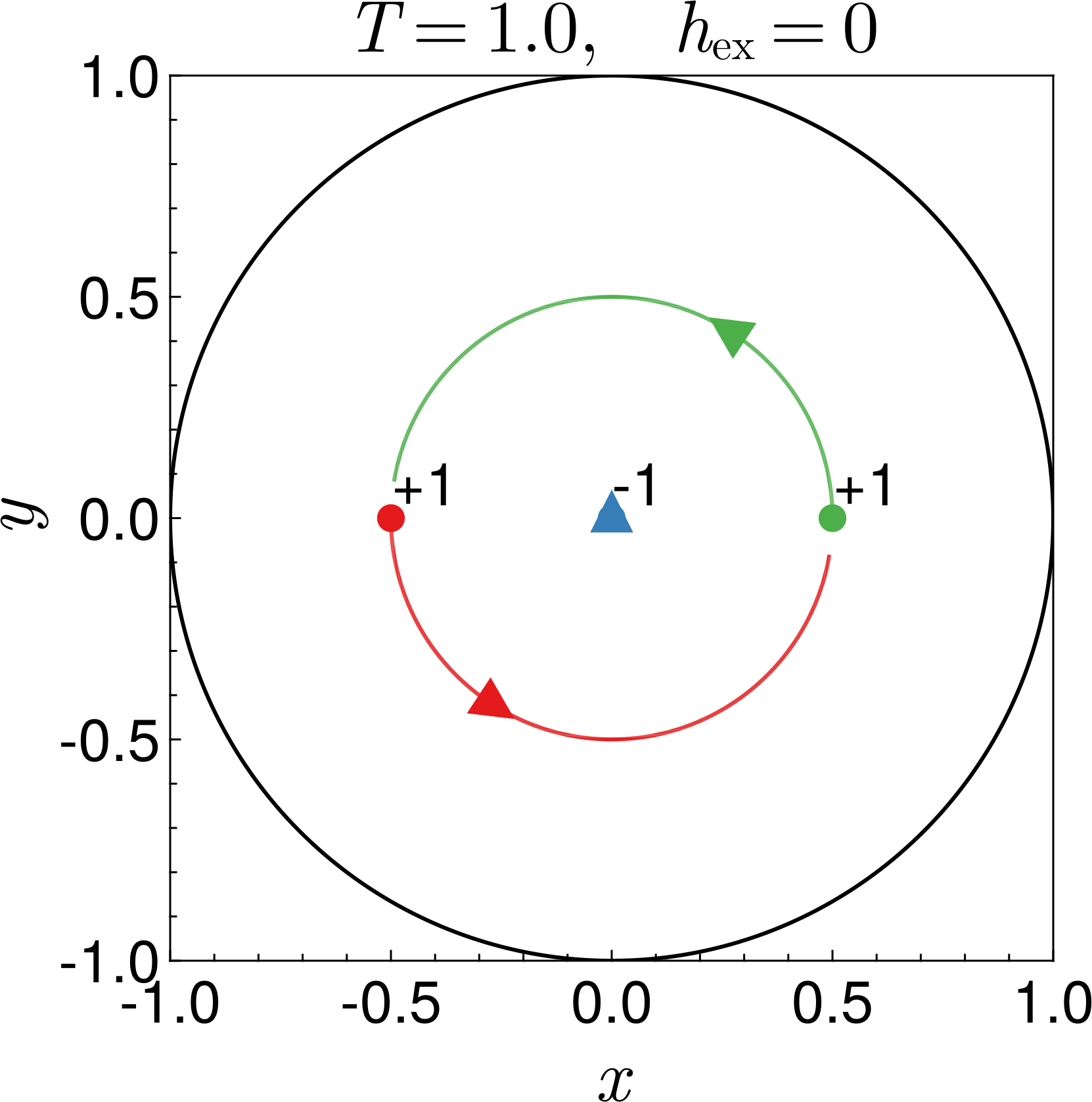}\hfill
  \includegraphics[width=0.32\linewidth]{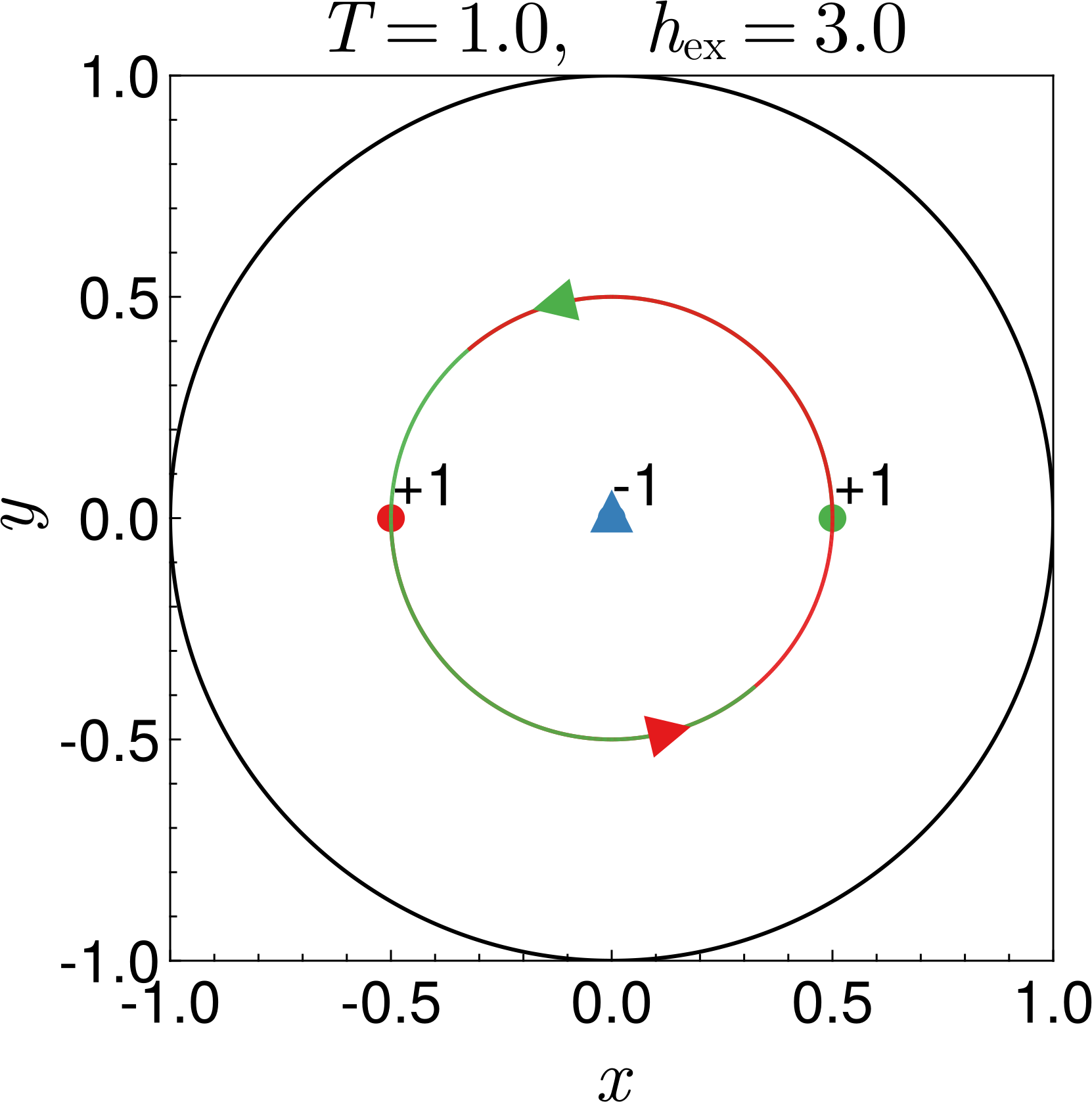}\hfill
  \includegraphics[width=0.32\linewidth]{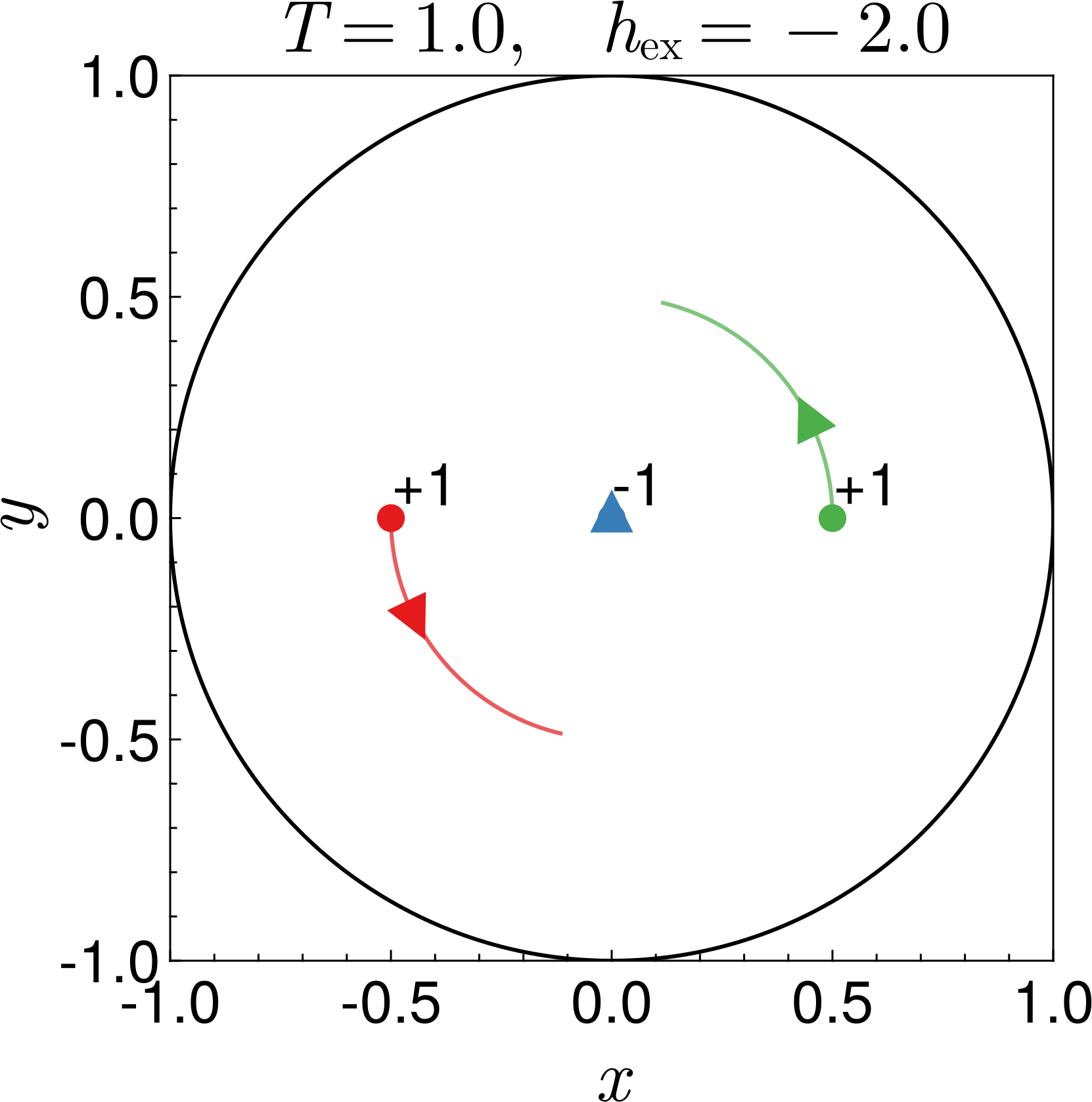}\\
  \includegraphics[width=0.32\linewidth]{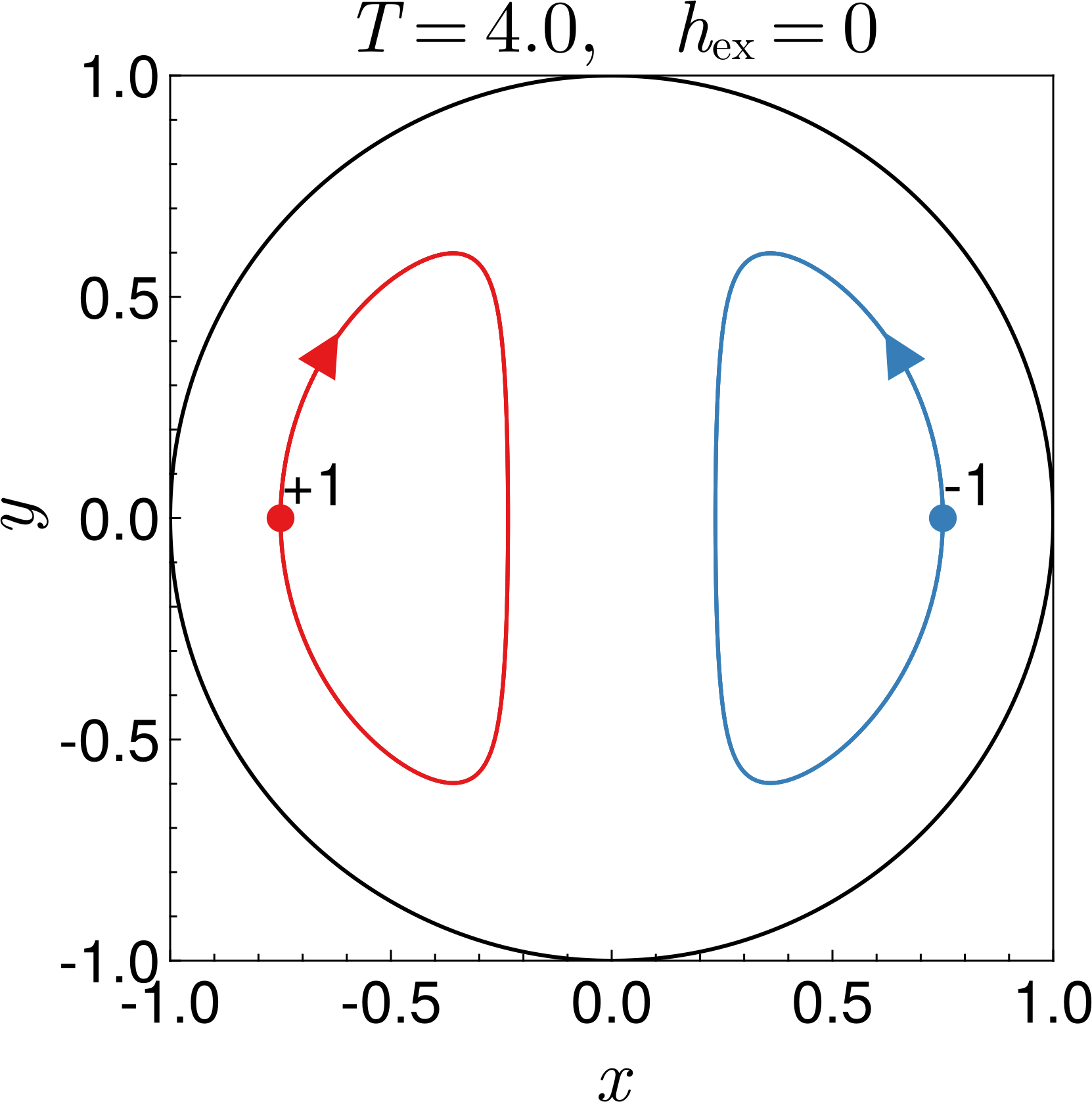}\hfill
  \includegraphics[width=0.32\linewidth]{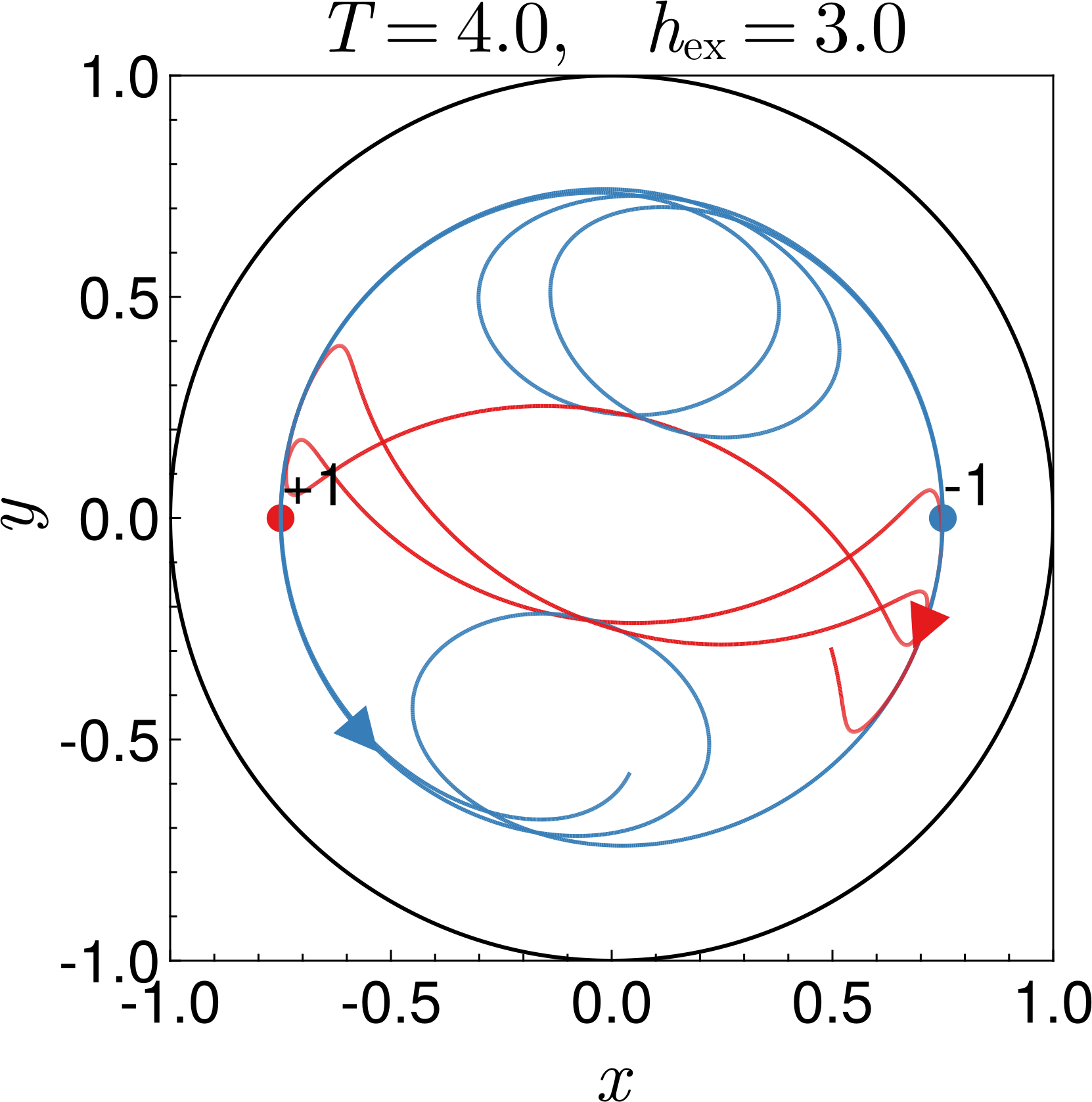}\hfill
  \includegraphics[width=0.32\linewidth]{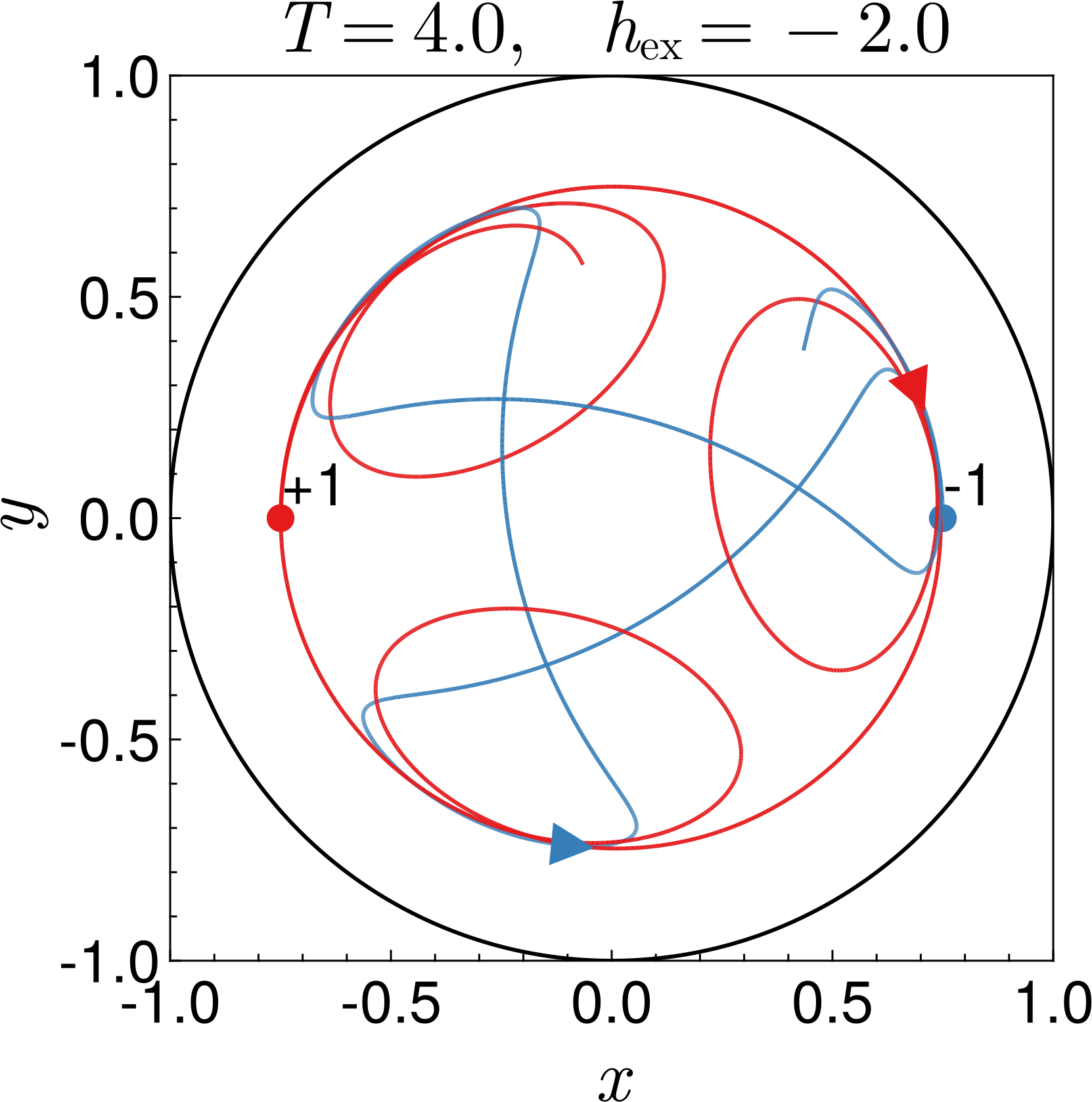}\hfill
  \includegraphics[width=0.32\linewidth]{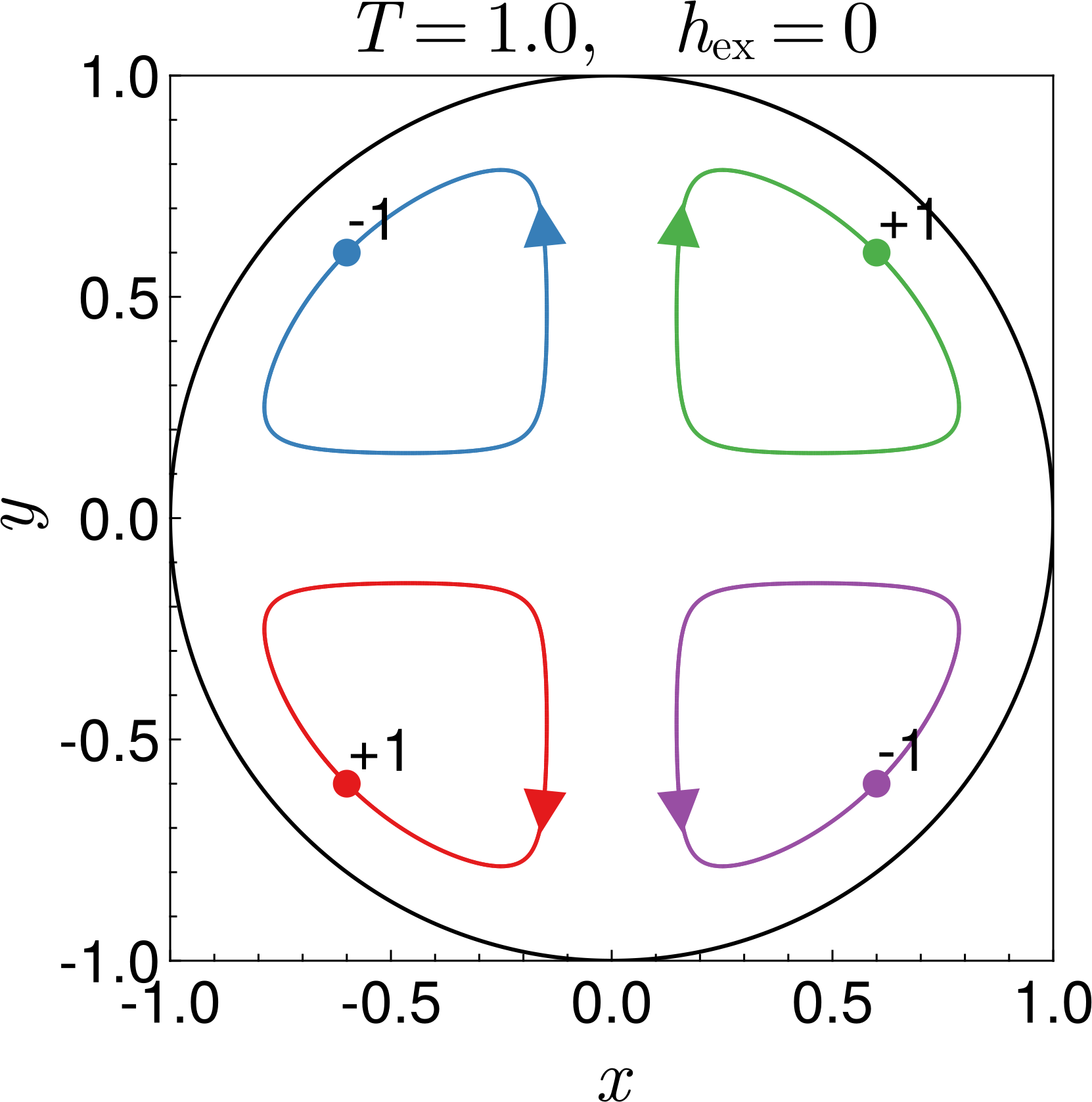}\hfill
  \includegraphics[width=0.32\linewidth]{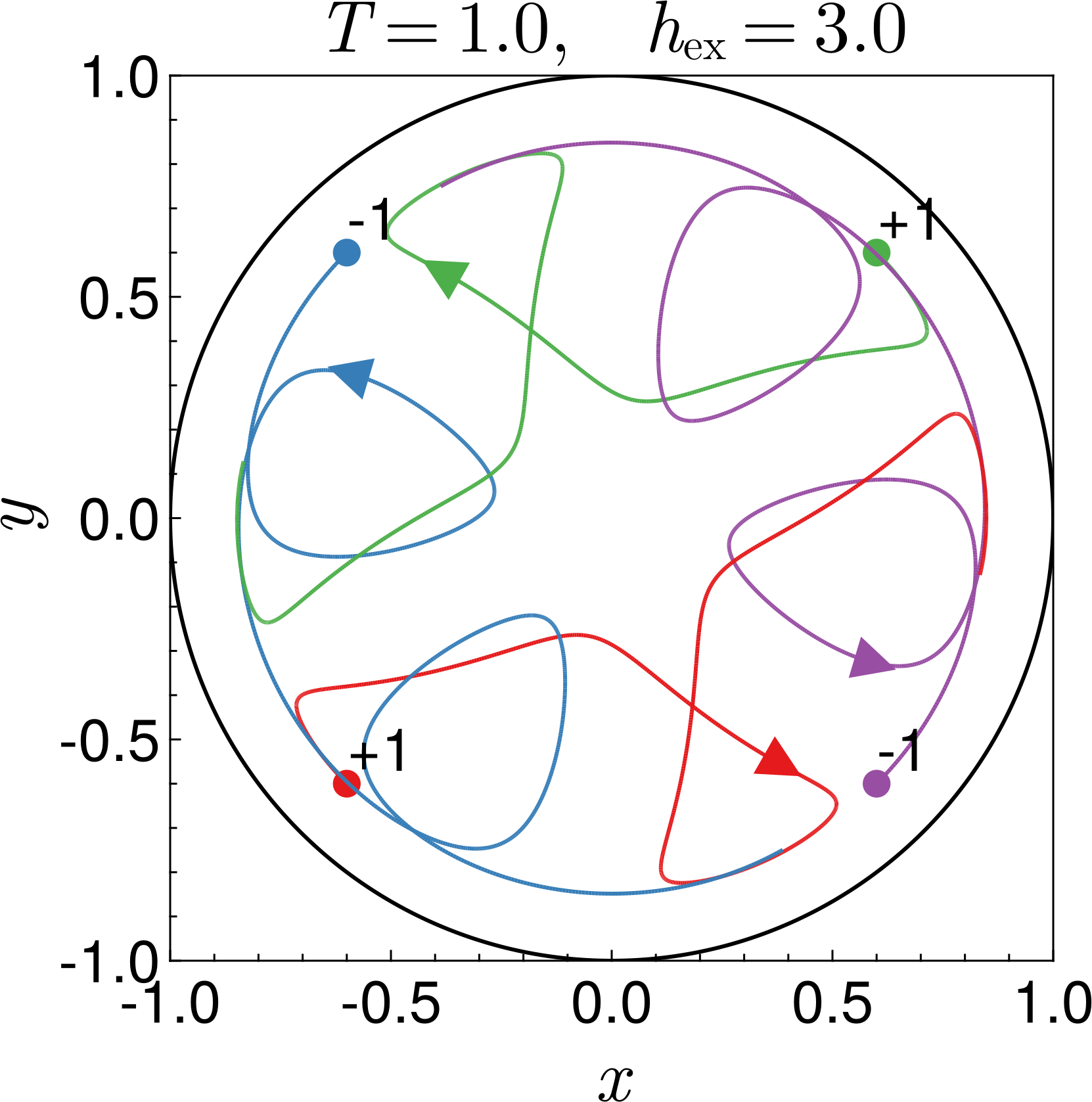}\hfill
  \includegraphics[width=0.32\linewidth]{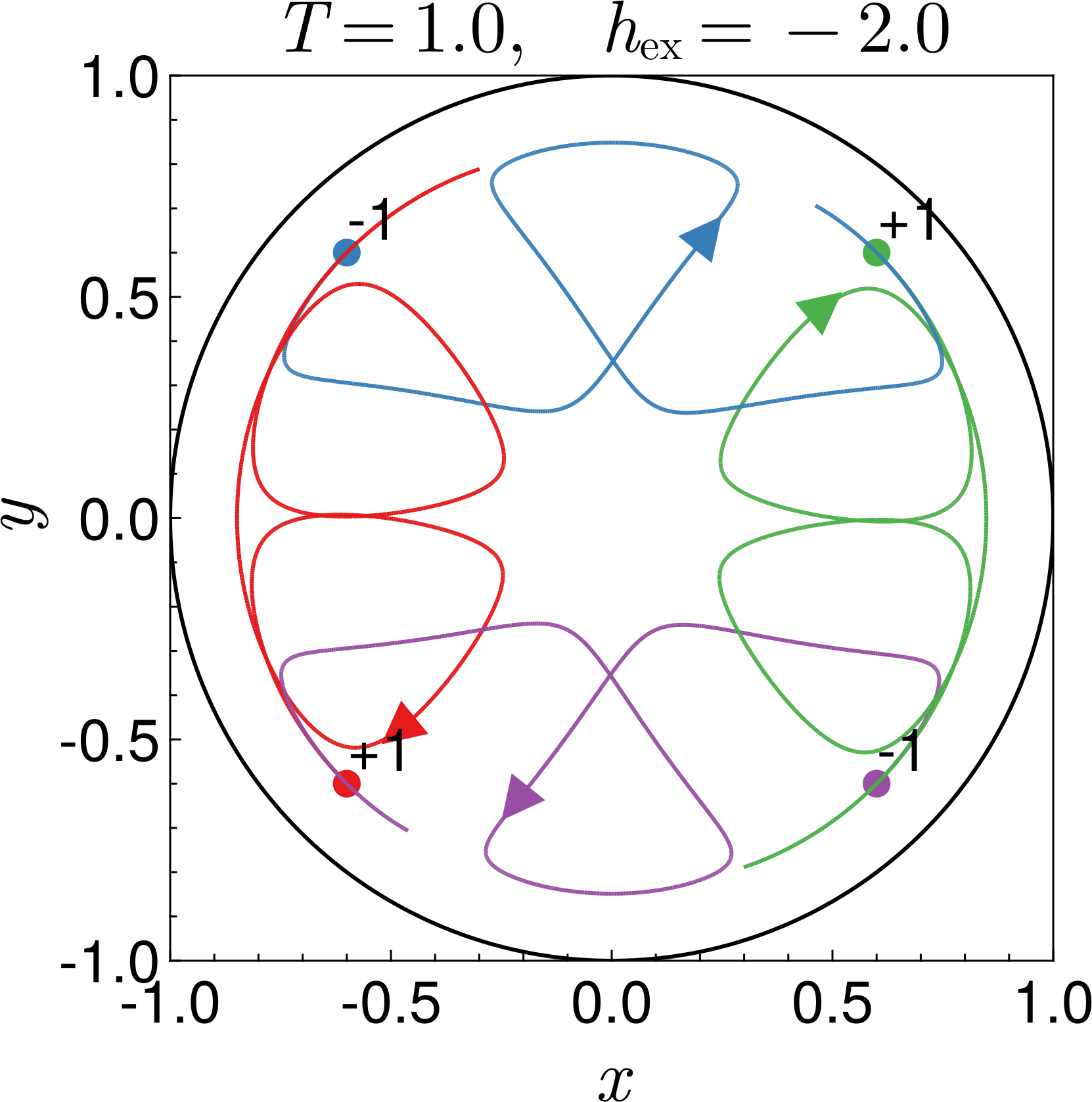}\\
  \caption{Trajectories of vortices in the singular limit $\epsilon\searrow0$.
    Initial conditions are represented by
    dots and the different degrees used are specified for each trajectory.
    From top to bottom: cases 1 to 4.
    From left to right: $h_{\rm ex} = 0$,
    $h_{\rm ex} = 3$, $h_{\rm ex} = -2$.}
  \label{fig:trajectories}
\end{figure}

In Figure~\ref{fig:num_cvg}, we illustrate the numerical convergence of the
trajectories, both in time and in $m$, on the numerical scheme we used to
generate the limiting trajectories for $h_{\rm ex} = 3$ and various initial
conditions: we use $d = \{-1,+1\}$ and $a^0 = \{(-\ell,0), (\ell,0)\}$ with
different values of $\ell$ to mimic vortices that are either far apart, close to
each other or close to the boundary $\partial\Omega$. As expected, the observed
convergence rates are compatible with a 4th order in time and spectral accuracy
in $m$ independently of the inter-vortex distance. However, when the vortices
are too close to each other, or too close to the boundary, the prefactors in the convergence
rate increase, as proven in \cite{CKMS25} without magnetic field.

\begin{figure}[h!]
  \centering
  \includegraphics[width=0.5\linewidth]{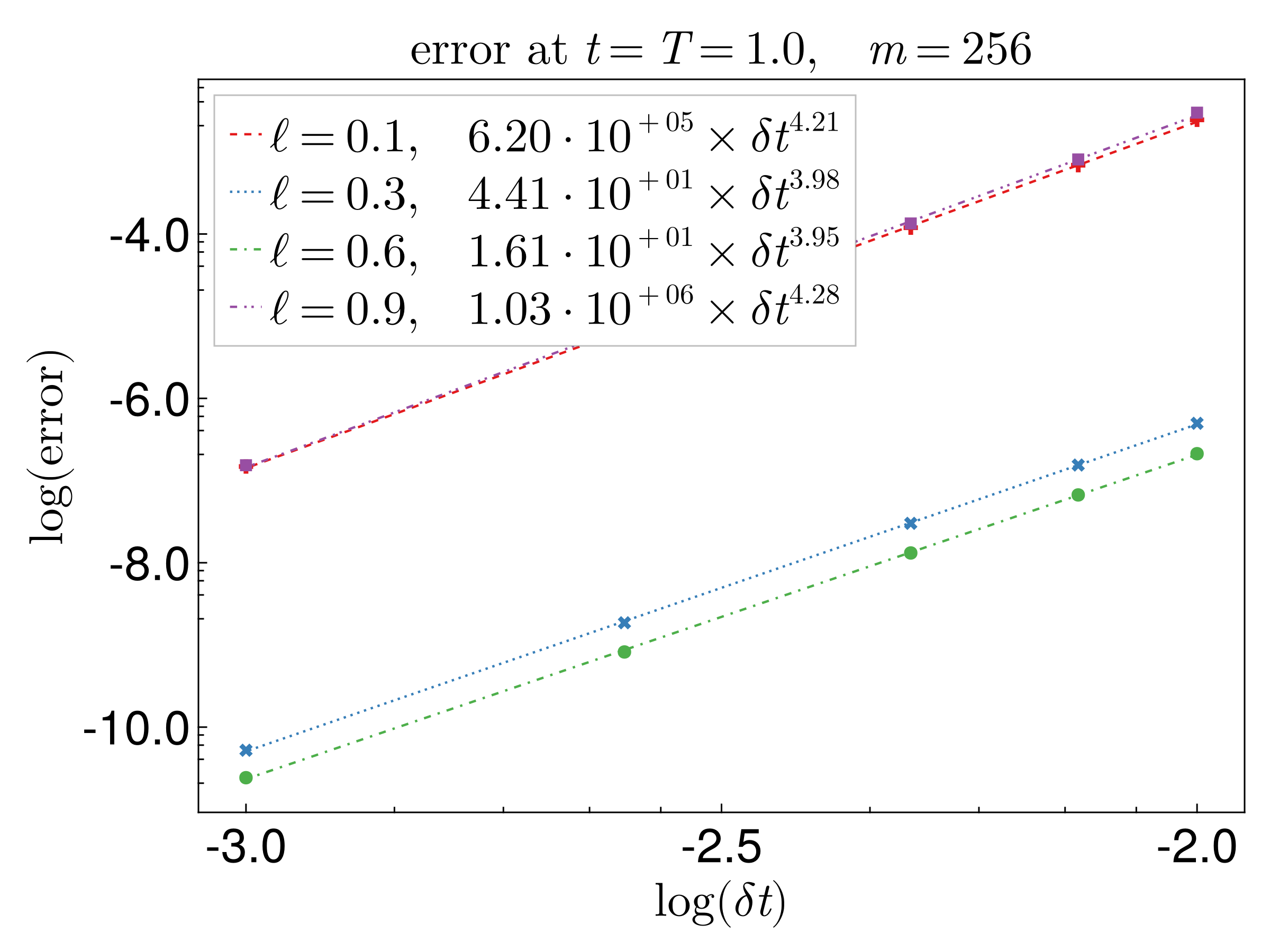}\hfill
  \includegraphics[width=0.5\linewidth]{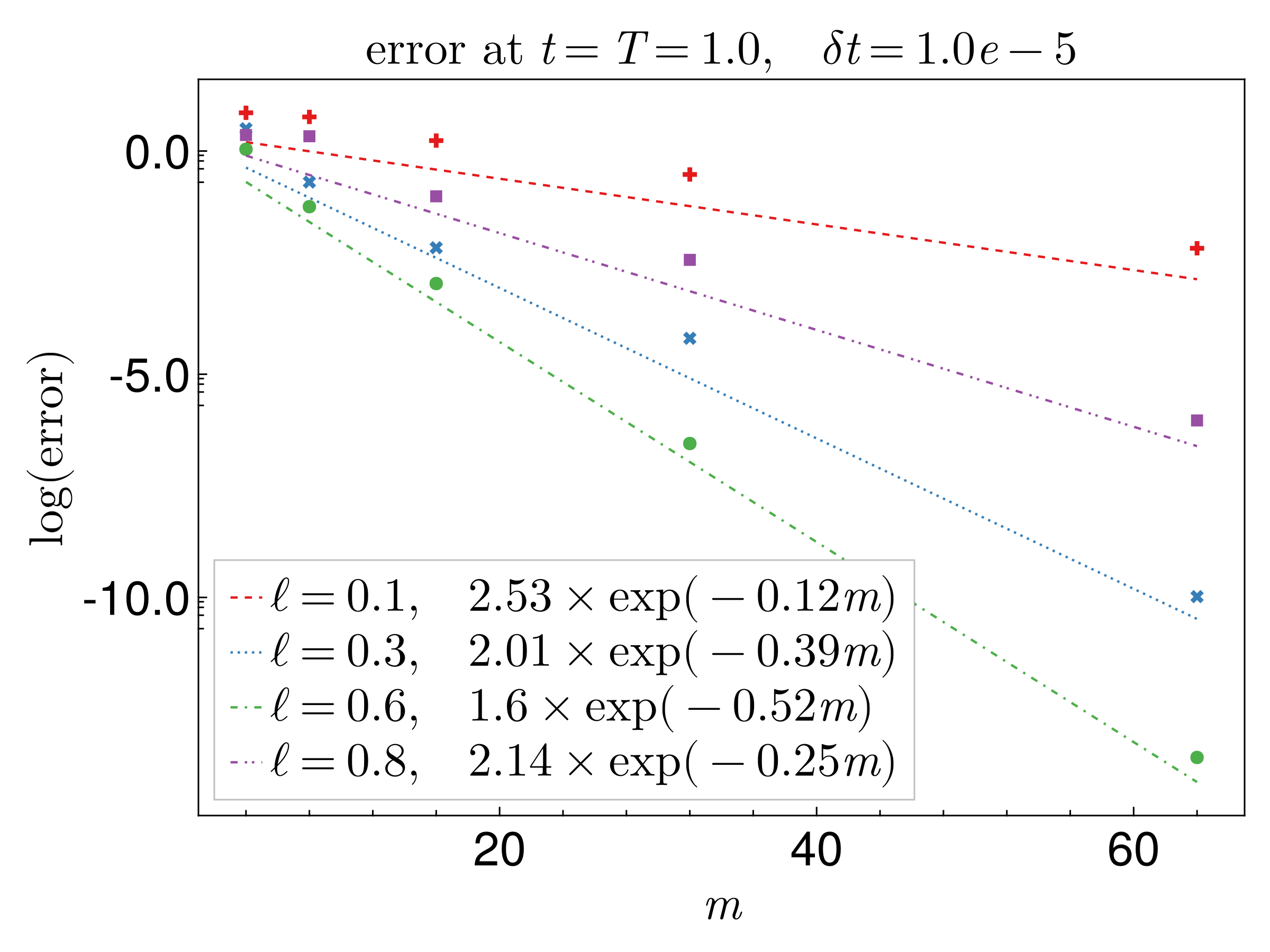}\\
  \caption{Numerical illustration of the convergence of the numerical method to
    obtain the limiting trajectories, for $h_{\rm ex} = 3$. (Left) The slope
    shows the expected 4th order in time accuracy. (Right) The plots are
    compatible with spectral accuracy in the number of Fourier modes on the
    boundary.}
  \label{fig:num_cvg}
\end{figure}

In the next section, we present how to recover an approximate solution to the
PDE from the trajectories obtained in the singular limit.

\section{Numerical scheme and results}
\label{sec:numerical scheme}

The goal of this section is to introduce a numerical strategy to recover, at the
PDE level, an approximation of the solution to the magnetic Ginzburg-Landau
equation \eqref{eq:GPE_mag} from the, finite dimensional, vortex trajectories.
First, we start with some complementary remarks on the numerical construction of
well-prepared states.

\subsection{Complements on well-prepared states}\label{ssec:WP}

We focus here on the (numerical) construction of well-prepared states
$(u_\epsilon^0, A_\epsilon^0)_{\epsilon>0}$, in the sense of
Definition~\ref{def:wellprepared}. Looking for a radially symmetric solution to
the (magnetic free) stationary Ginzburg-Landau equation in a ball, one finds
that its radial profile is given by the function $f_{\epsilon,r_0}$ that
satisfies the ODE
\begin{equation}\label{eq:radialODE}
  \frac1r\left(rf_{\epsilon,r_0}'(r)\right)' - \frac{1}{r^2}f_{\epsilon,r_0}(r) +
  \frac1{\epsilon^2}\left(1-|f_{\epsilon,r_0}(r)|^2\right)f_{\epsilon,r_0}(r) = 0,
\end{equation}
together with the boundary conditions $f_{\epsilon,r_0}(0) = 0$ and
$f_{\epsilon,r_0}(r_0) = 1$. For finite $r_0$, this equation can be solved
numerically with very high precision (\emph{e.g.} using a nonlinear finite
element solver), even for $\epsilon$ as small as $10^{-3}$, see
Figure~\ref{fig:feps}. Finally, note that, by a scaling argument,
$f_{\epsilon,r_0}$ only depends on the ratio $r_0/\epsilon$, so that we may
write only $f_\epsilon$, see \cite[Section 5]{KS11} for more details.

\begin{figure}[h!]
  \centering
  \includegraphics[width=0.5\linewidth]{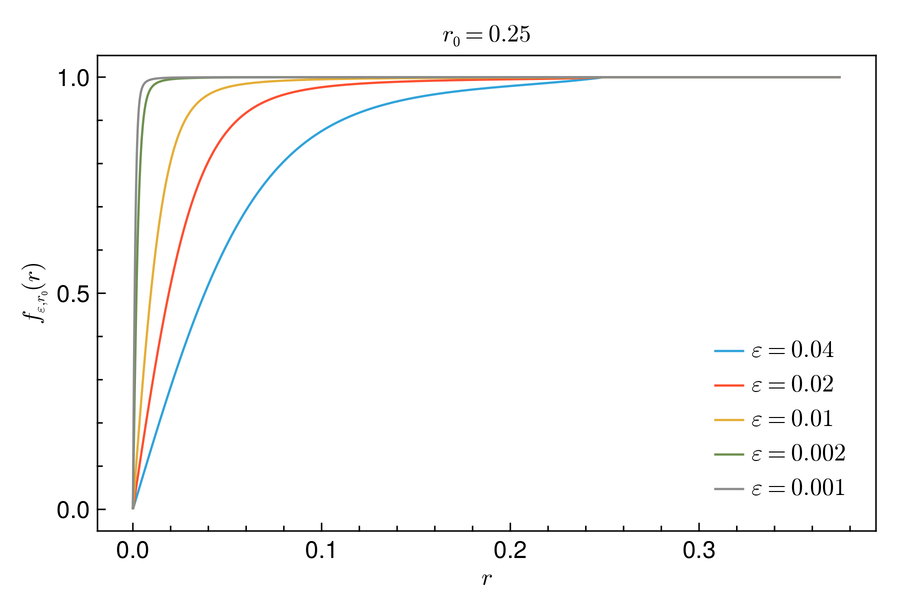}
  \caption{Numerical approximation of $f_{\epsilon,r_0}$.}
  \label{fig:feps}
\end{figure}

Starting from the localized approximation $(r, \theta) \mapsto
f_\epsilon(r)e^{\ii \theta} $ of a single vortex, one can then build the
following initial condition for \eqref{eq:GPE_mag}: given vortex positions $a^0
= (a_j^0)_{j=1,\dots,n}\in\Omega_*^{n}$ with degrees $d =
(d_j)_{j=1,\dots,n}\in\{\pm1\}^n$, we consider
\begin{equation}\label{eq:init}
  u_{\epsilon}^*(x; a^0, d) =
  u^*(x; a^0, d) \prod_{j=1}^n f_{\epsilon}(|x -
  a_j^0|) =
  \exp(\ii \varphi(x)) \prod_{j=1}^n
  \left(\frac{x - a_j}{|x - a_j|}\right)^{d_j} f_{\epsilon}(|x -
  a_j^0|)
  , \quad x\in\overline\Omega
\end{equation}
where $u^*$ is the canonical harmonic map with vortex locations $a^0$ (see
Remark~\ref{rmk:canon}), and the phase $\varphi$ is computed to match Neumann
boundary conditions.
This numerical strategy can be seen as an operator from the manifold
$\{u^*(a,d),\ a\in\Omega_*^{n},\ d\in\{\pm1\}^n\}$ to $H^1(\Omega,\C)$
which smoothes out the canonical harmonic map $u^*(a,d)$ at the
singularities locations. It is identical to the smoothing strategy that we used
in the magnetic-free case \cite{CKMS25}. In the magnetic case, it is sufficient
to consider in addition $A_\epsilon = A_* = \curl\Xi$ where $\Xi$ solves
\eqref{eq:xieq0} to obtain well-prepared states
$(u_\epsilon^*,A_*)_{\epsilon>0}$, see \cite[Lemma 5.8]{KS11}. Using this
smoothing strategy, we are able to reconstruct an approximate solution to the
infinite dimensional PDE as explained in the next section.

\subsection{Description of the method}\label{sec:method_1}

Instead of directly solving \eqref{eq:GPE_mag}, we propose to take advantage of the
trajectories of the vortices in the singular limit $\epsilon\searrow0$ as well as the
properties of the smoothing procedure used to obtain well-prepared initial
conditions above. This yields the following algorithm:
\begin{enumerate}
  \item Define initial vortex positions $a^0\in\Omega_*^{n}$ and degrees
    $d\in\{\pm1\}^n$. Set up the initial phase $\varphi$ such that the initial
    condition $u_{\epsilon}^0 = u_\epsilon^*(a^0,d)$ satisfies the homogeneous
    Neumann boundary conditions. Compute and store the radial function
    $f_{\epsilon}$.
  \item Evolve $a(t)$ according to the ODE \eqref{eq:ODE1} up to some maximum
    time $T$.
  \item At time $t>0$, build back first an approximation of the wave function
    from the vortex positions $a(t)$ as
    \begin{equation}\label{eq:smoothed_approx}
      u_{\epsilon}^*(t) = u_{\epsilon}^*(a(t), d) =
      u^*(\cdot; a(t), d) \prod_{j=1}^n f_{\epsilon}(|\cdot -
      a_j(t)|),
    \end{equation}
    where $u^*(x; a(t), d)$ is the canonical harmonic map (see
    Remark~\ref{rmk:canon}) defined by
    \begin{equation*}
      u^*(x; a(t), d) = \exp(\ii \varphi(x)) \prod_{j=1}^n
      \left(\frac{x - a_j}{|x - a_j|}\right)^{d_j},
    \end{equation*}
    with $\varphi$ the unique zero-mean harmonic function solving
    \begin{equation}\label{eq:phi_rebuilt}
      \begin{cases}
        \Delta \varphi = 0 \quad\text{in } \Omega,\\ \displaystyle
        \partial_\nu \varphi(x) = - \sum_{j=1}^n d_j\partial_\nu\theta(x-a_j)
        = \sum_{j=1}^n d_j \partial_\tau \ln|x-a_j|
        \quad\text{on } \partial\Omega.
      \end{cases}
    \end{equation}
    Here, $\theta$ denotes the angle of a 2D vector (seen as the argument of the
    associated complex number), and $\nu$ (resp. $\tau$) the outward (resp.
    tangential) normal vector.
    Computing $\varphi$ with the appropriate boundary conditions is required so that
    the approximate solution $u_{\epsilon}^*(t)$ satisfies homogeneous
    Neumann boundary conditions. This Laplace's equation
    can be solved using the same harmonic polynomial basis as the
    one used to solve \eqref{eq:Req}. However, note that the harmonic function
    $\varphi$ is defined only up to a constant: this implies that the reconstructed wave
    function $u_\epsilon^*$ is an approximation of $u_\epsilon$ only
    up to a constant phase.
    Nevertheless, we note that a global phase factor is nothing but a trivial gauge transformation (see~\eqref{eq:gauge}), and therefore, all physical quantities of interest such as the super-current, the magnetic field or the vorticity are independent of this phase.

    An approximation of the magnetic potential $A$ can also be recovered by
    computing $\Xi$ as $\Xi_h+\Xi_p$ (see eqs.~\eqref{eq:g function} and
    \eqref{eq:Weq}) and then computing $A_* = \curl\Xi$. The magnetic field
    $h_*$ can be reconstructed with $h_*=\curl A_*$ or directly evaluated using
    Lemma~\ref{lem:simplified magnetic field}.
\end{enumerate}

Finally, the method we propose can be summarized by the diagram in
Figure~\ref{fig:diagram}.
\begin{figure}[h!]
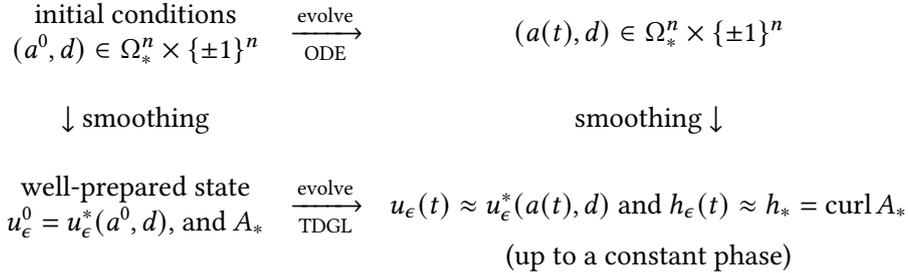

  \centering
  \begin{tabular}{ccc}
    \begin{tabular}[c]{@{}c@{}}initial conditions\\
      $(a^0,d)\in\Omega_*^{n}\times\{\pm1\}^n$\end{tabular}
    & $\xrightarrow[\text{ODE}]{\text{evolve}}$ &
    $(a(t),d)\in\Omega_*^{n}\times\{\pm1\}^n$                      \\
    \multicolumn{1}{l}{}
    & \multicolumn{1}{l}{}                      & \multicolumn{1}{l}{} \\
    $\downarrow$ smoothing                                                                                                          &                                           &  smoothing $\downarrow$\\
    \multicolumn{1}{l}{}
    & \multicolumn{1}{l}{}                      & \multicolumn{1}{l}{} \\
    \begin{tabular}[c]{@{}c@{}}well-prepared state\\ $u_\epsilon^0
      = u_{\epsilon}^*(a^0,d)$, and $A_*$\end{tabular} &
    $\xrightarrow[\text{TDGL}]{\text{evolve}}$ &
    $u_\epsilon(t)\approx u_{\epsilon}^*(a(t),d)$ and
    $h_\epsilon(t)\approx h_* = \curl A_*$ \\ & &
    \text{(up to a constant phase)}
  \end{tabular}
  \caption{Diagram summarizing the numerical simulation of the TDGL
    equation \eqref{eq:GPE_mag} via vortex tracking.}
  \label{fig:diagram}
\end{figure}

\subsection{Vortex positions as initial conditions}\label{sec:numres_1}

We now present some numerical simulations of approximate solutions to
\eqref{eq:GPE_mag} in the regime of small $\epsilon$. The reference
vortex trajectories are those from Section~\ref{ssec:HD_sim}. The solution of
\eqref{eq:radialODE} is approximated numerically with high accuracy using 1D
finite elements with small mesh size $\delta r = 10^{-5}$ and $r_0=0.1$. This is
computed once and for all in an offline stage, before running the simulation.

Finally, the solution to Laplace's equation \eqref{eq:phi_rebuilt} in the
smoothing process at time $t>0$ is computed using harmonic polynomials of degree
$m=128$. Here, the initial vortex positions $a^0$ are provided as
input data and the initial phase $\varphi$ is computed in order to match the
Neumann boundary conditions (see \eqref{eq:init}). We present the
numerical results obtained by the application of our numerical method to cases
1, 2, 3 and 4 from Section~\ref{ssec:HD_sim}, for $\epsilon=0.02$ and
display, for each of them, the modulus and the phase of $u_\epsilon^*$ as
well as the magnetic field at different times $t$. Note that the boundary
conditions of the magnetic field $h_*$ do match the constant value of $h_{\rm ex}$
and that, independently of the sign of $h_{\rm ex}$, vortices with a positive
(resp. negative) degree induce a positive (resp. negative) magnetic field.

\begin{figure}[p!]
  \includegraphics[width=0.5\linewidth]{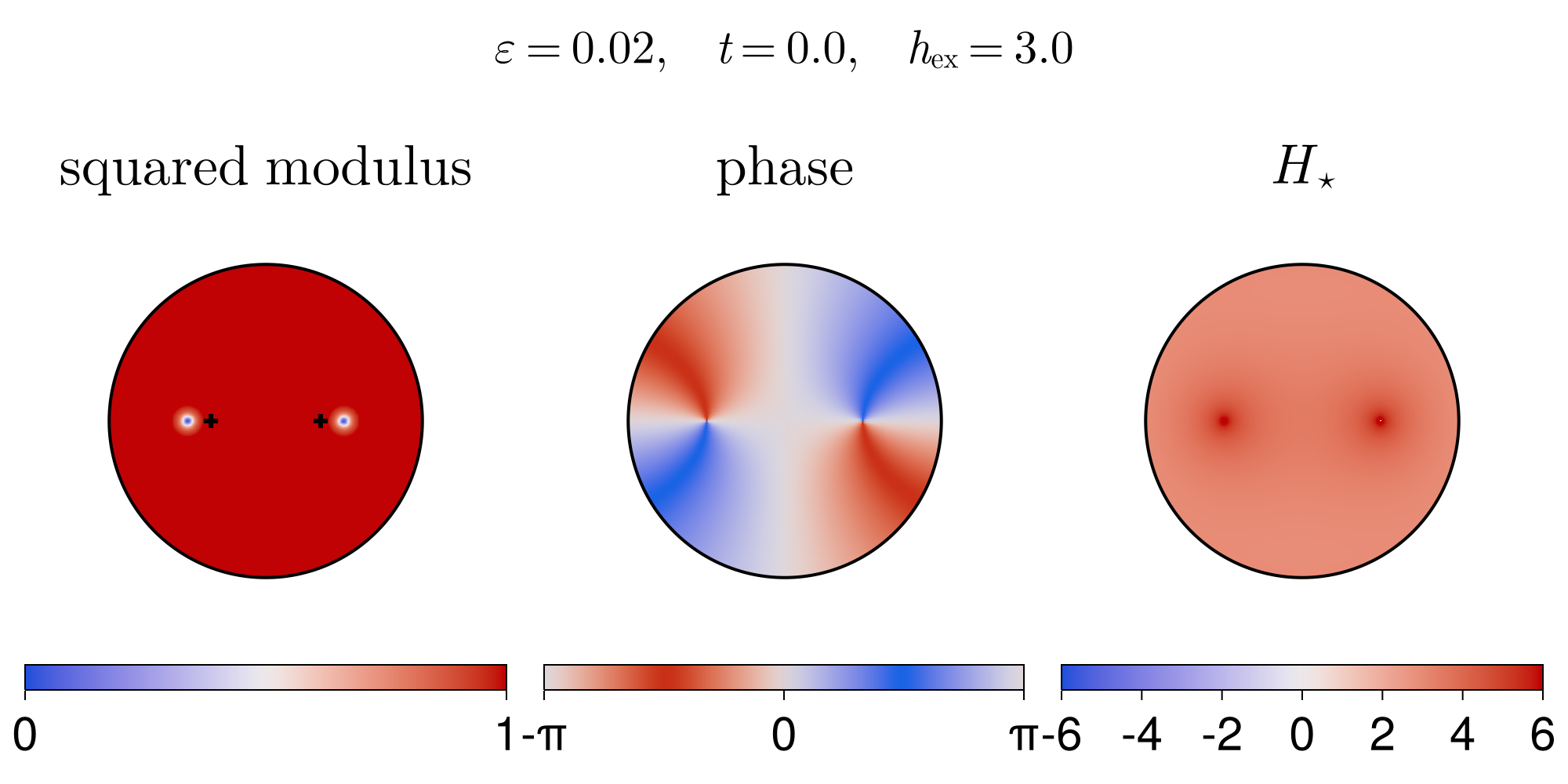}\hfill
  \includegraphics[width=0.5\linewidth]{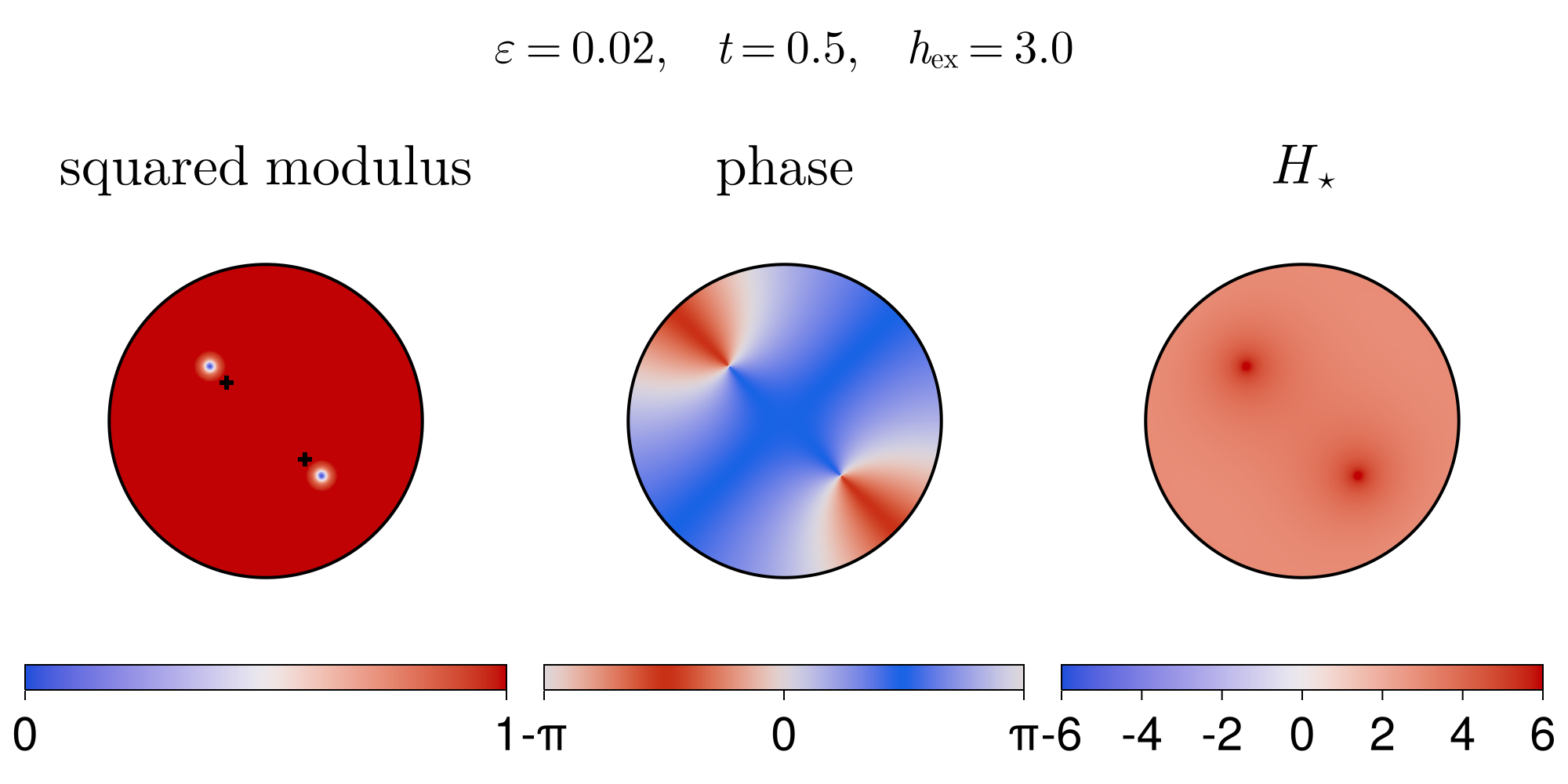}\\
  \caption{\textbf{Case 1} -- $h_{\rm ex}=3$ : Squared modulus and phase of
    $u_{\epsilon}^*(t)$, with amplitude of $h_*$ for different times $t$.}
  \label{fig:smooth_case1_3}
\end{figure}

\begin{figure}[p!]
  \includegraphics[width=0.5\linewidth]{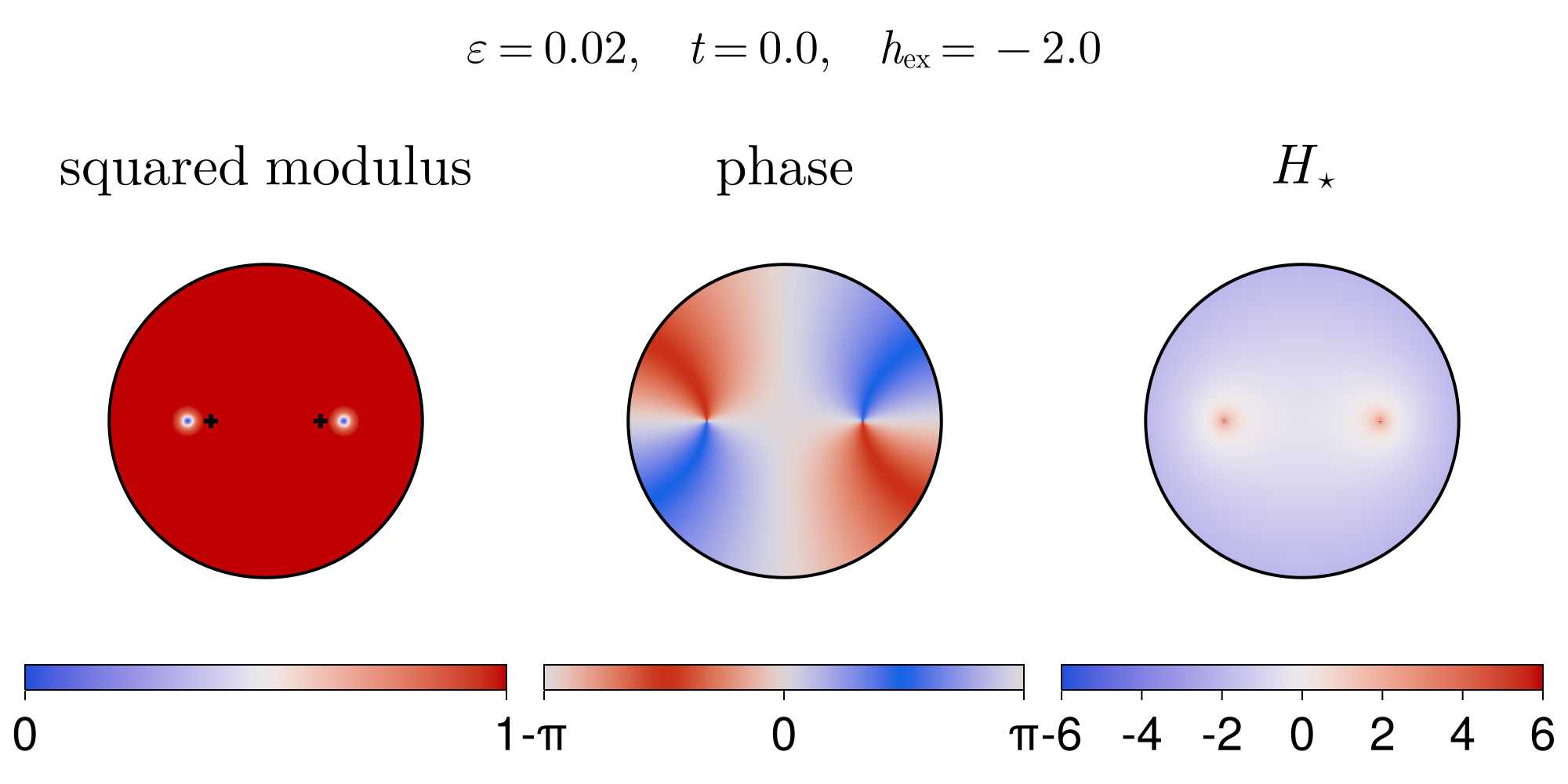}\hfill
  \includegraphics[width=0.5\linewidth]{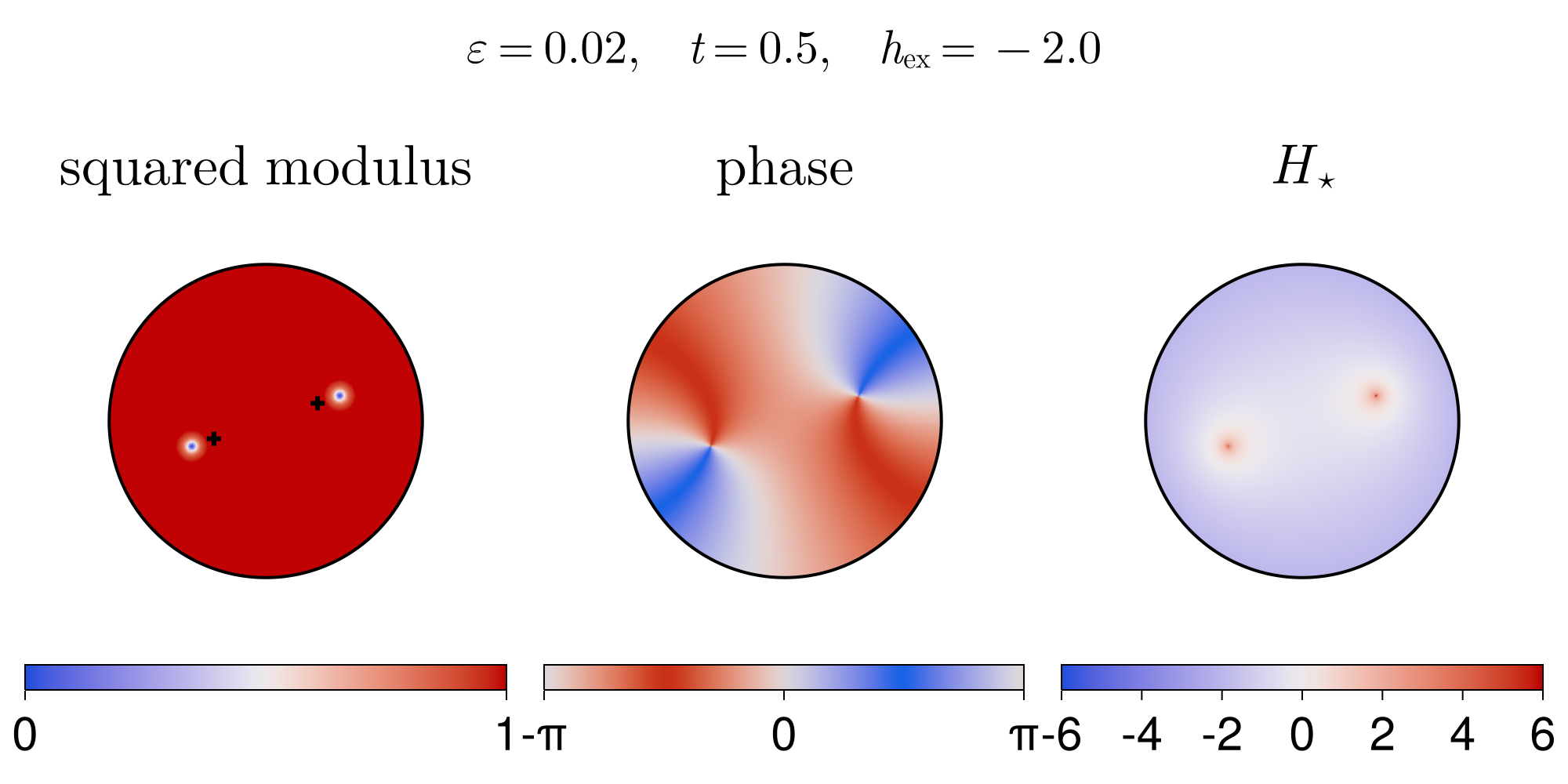}\\
  \caption{\textbf{Case 1} -- $h_{\rm ex}=-2$ : Squared modulus and phase of
    $u_{\epsilon}^*(t)$, with amplitude of $h_*$ for different times $t$.}
  \label{fig:smooth_case1_-2}
\end{figure}

\begin{figure}[p!]
  \includegraphics[width=0.5\linewidth]{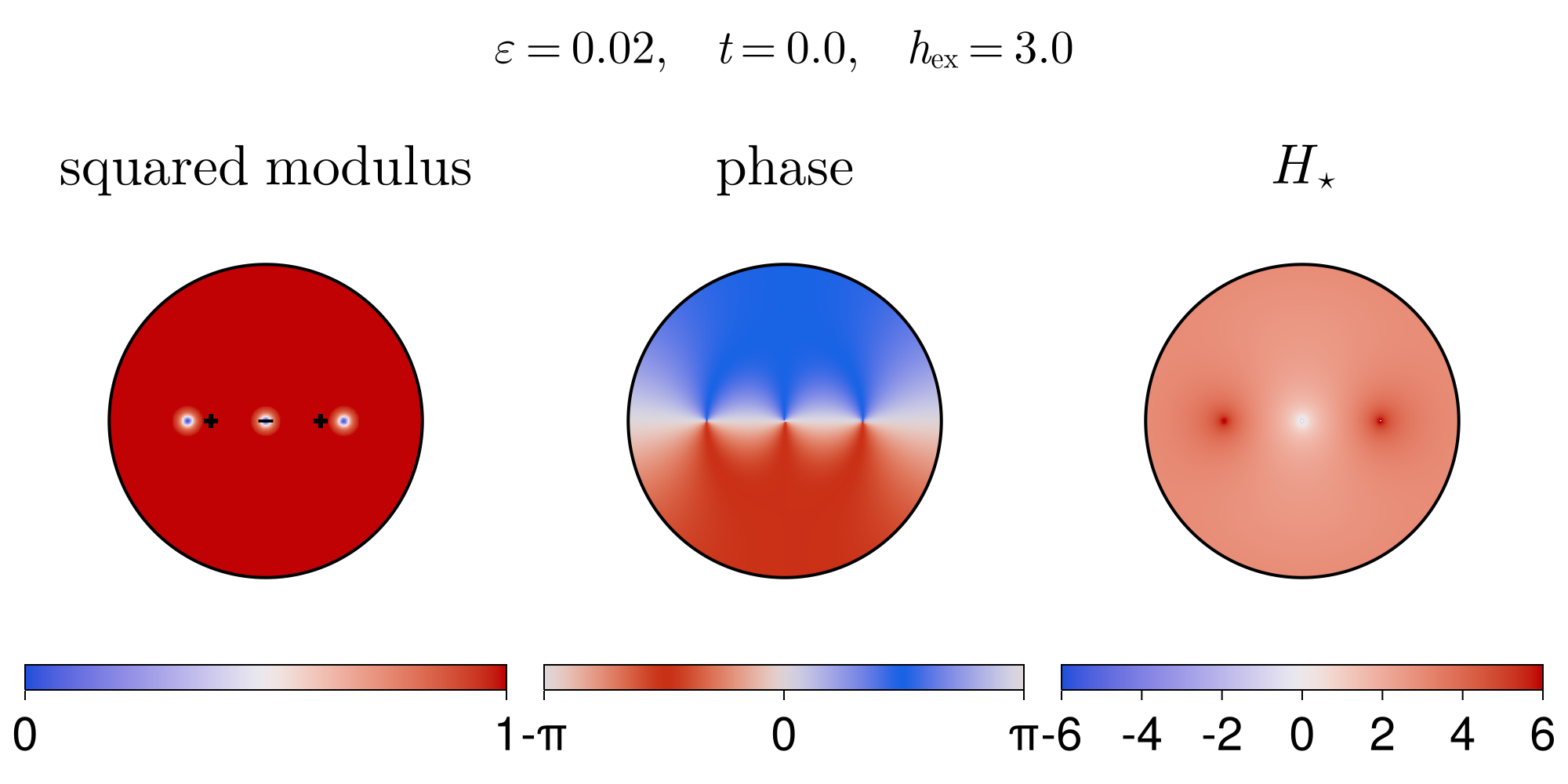}\hfill
  \includegraphics[width=0.5\linewidth]{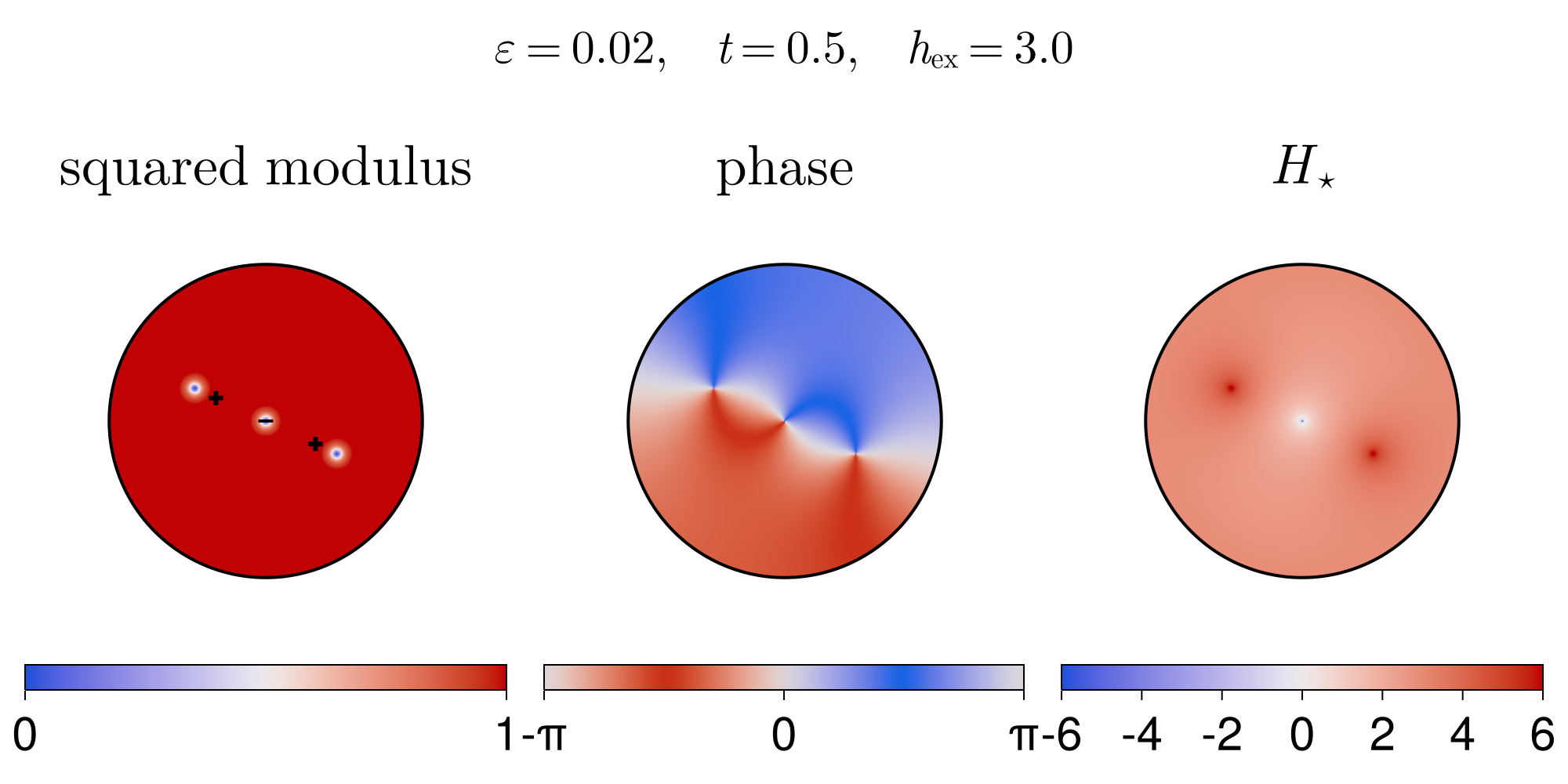}\\
  \caption{\textbf{Case 2} -- $h_{\rm ex}=3$ : Squared modulus and phase of
    $u_{\epsilon}^*(t)$, with amplitude of $h_*$ for different times $t$.}
  \label{fig:smooth_case2_3}
\end{figure}

\begin{figure}[p!]
  \includegraphics[width=0.5\linewidth]{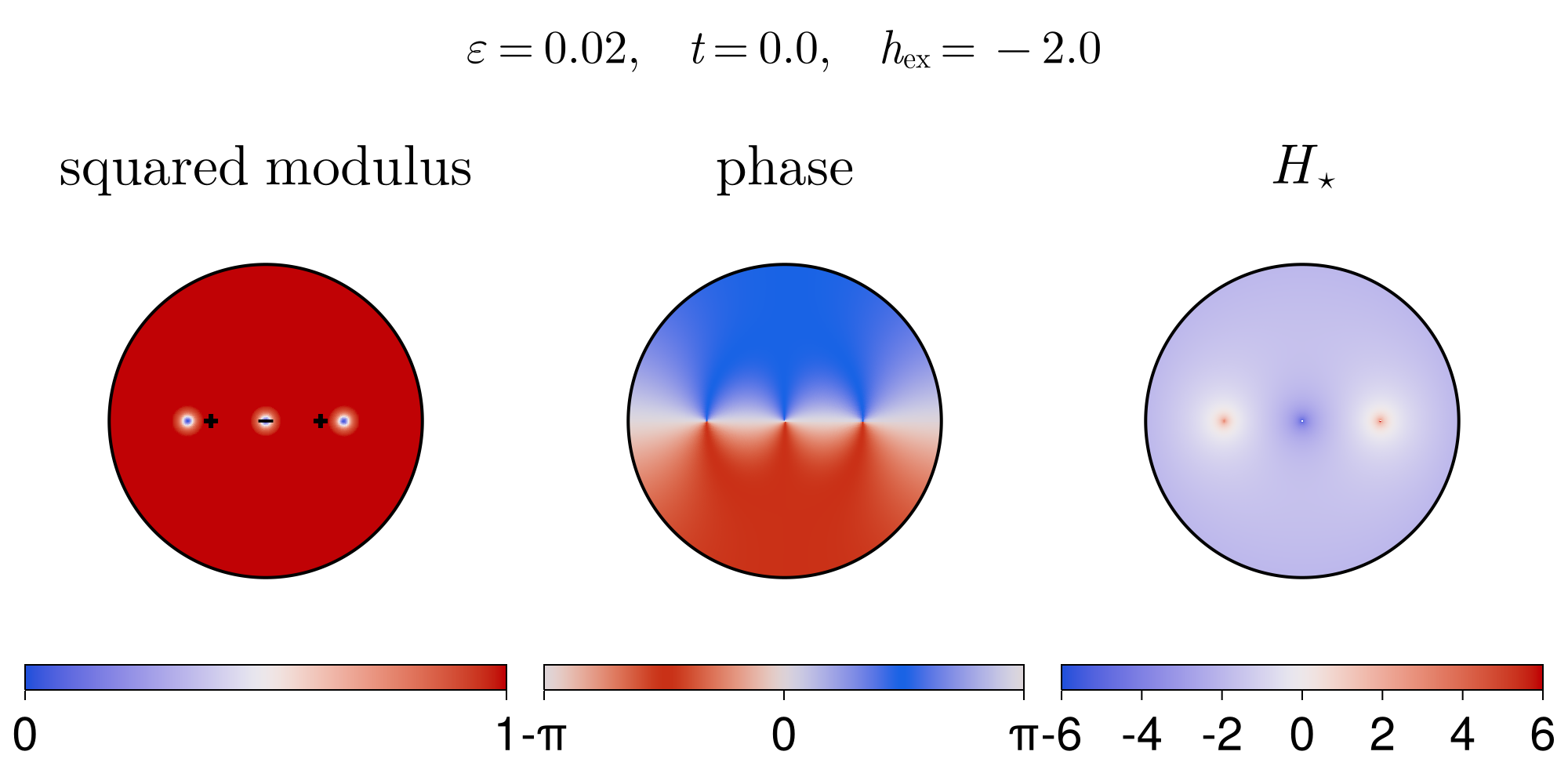}\hfill
  \includegraphics[width=0.5\linewidth]{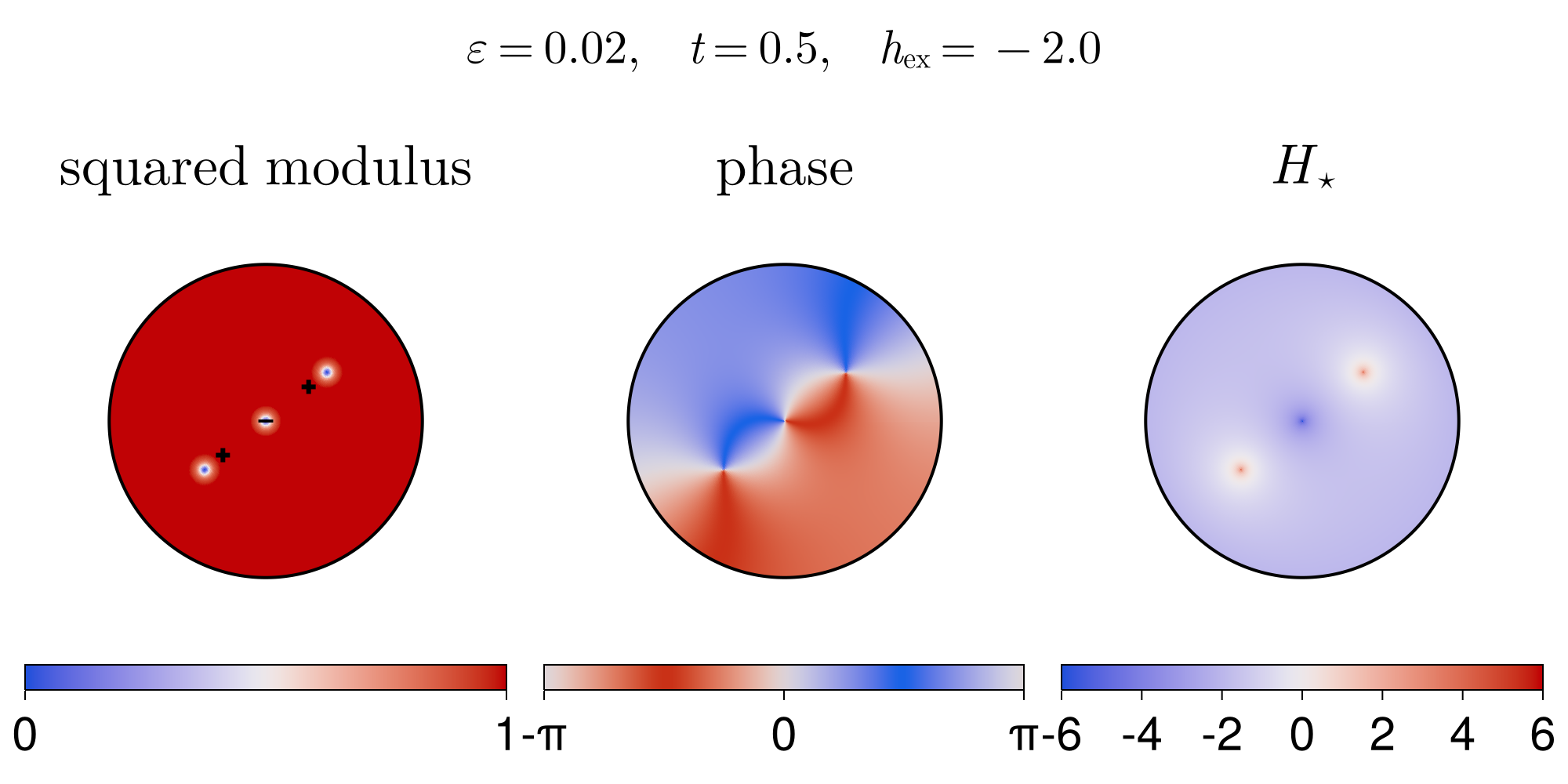}\\
  \caption{\textbf{Case 2} -- $h_{\rm ex}=-2$ : Squared modulus and phase of
    $u_{\epsilon}^*(t)$, with amplitude of $h_*$ for different times $t$.}
  \label{fig:smooth_case2_-2}
\end{figure}

\begin{figure}[p!]
  \includegraphics[width=0.5\linewidth]{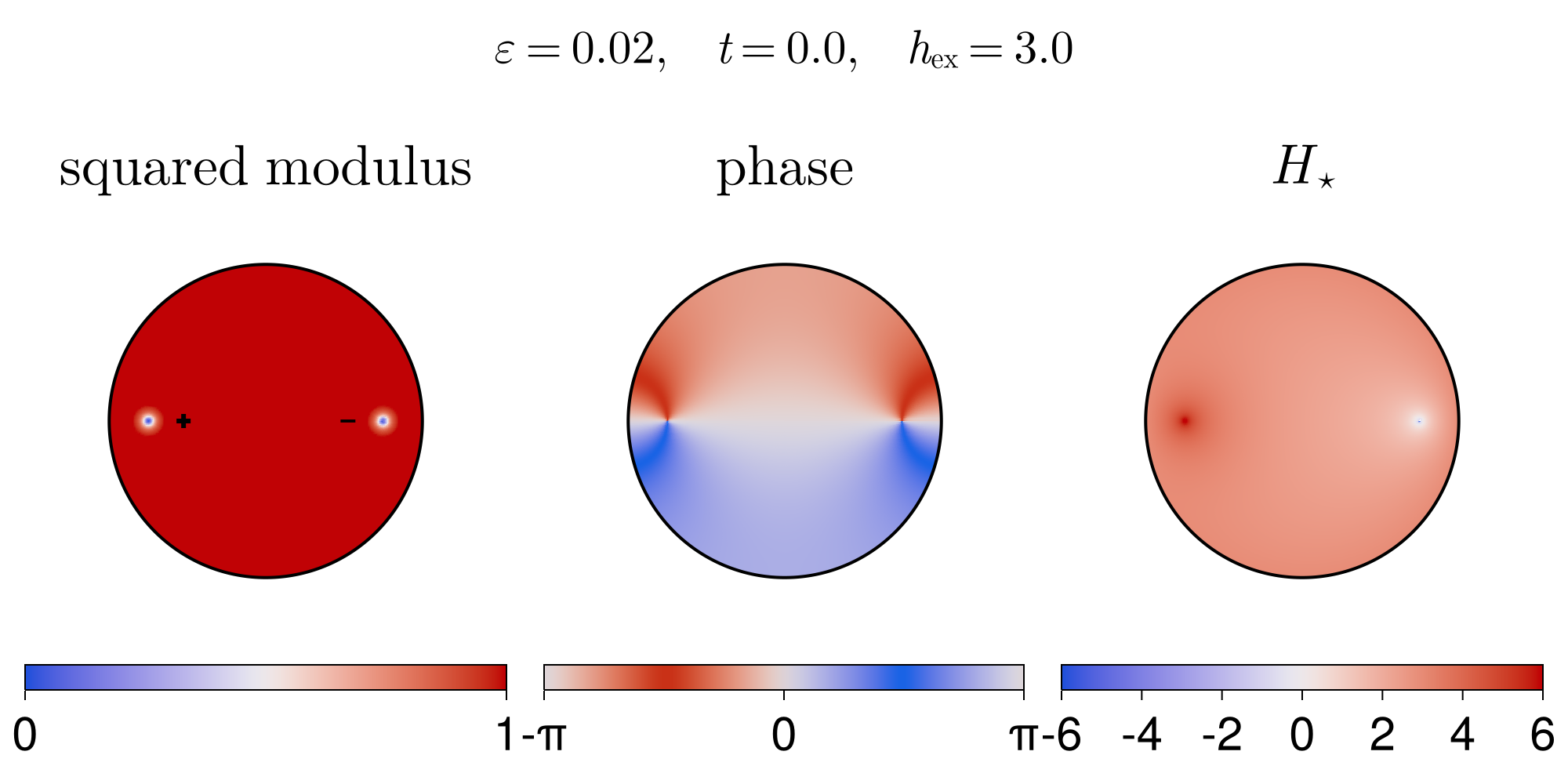}\hfill
  \includegraphics[width=0.5\linewidth]{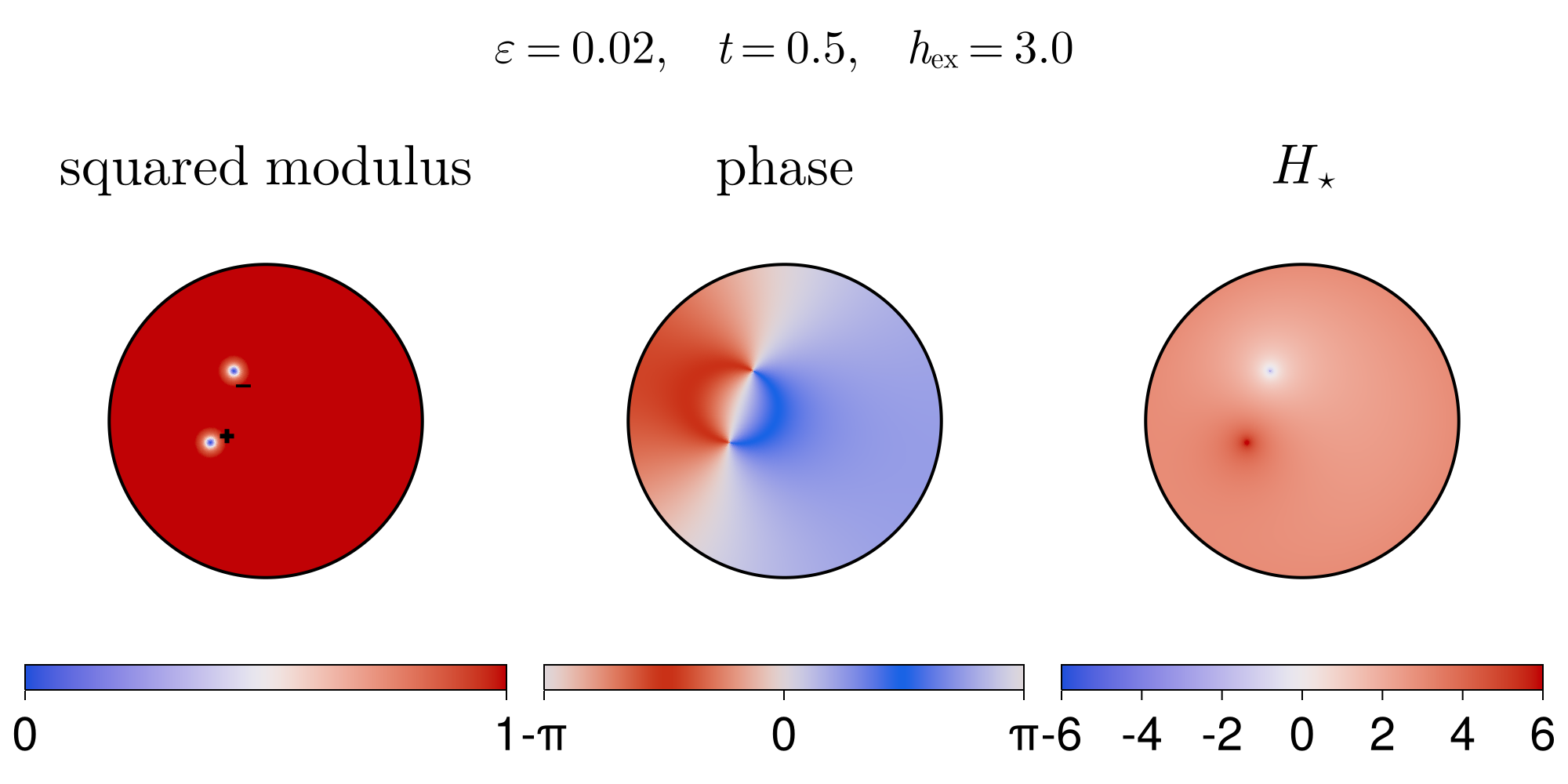}\\
  \caption{\textbf{Case 3} -- $h_{\rm ex}=3$ : Squared modulus and phase of
    $u_{\epsilon}^*(t)$, with amplitude of $h_*$ for different times $t$.}
  \label{fig:smooth_case3_3}
\end{figure}

\begin{figure}[p!]
  \includegraphics[width=0.5\linewidth]{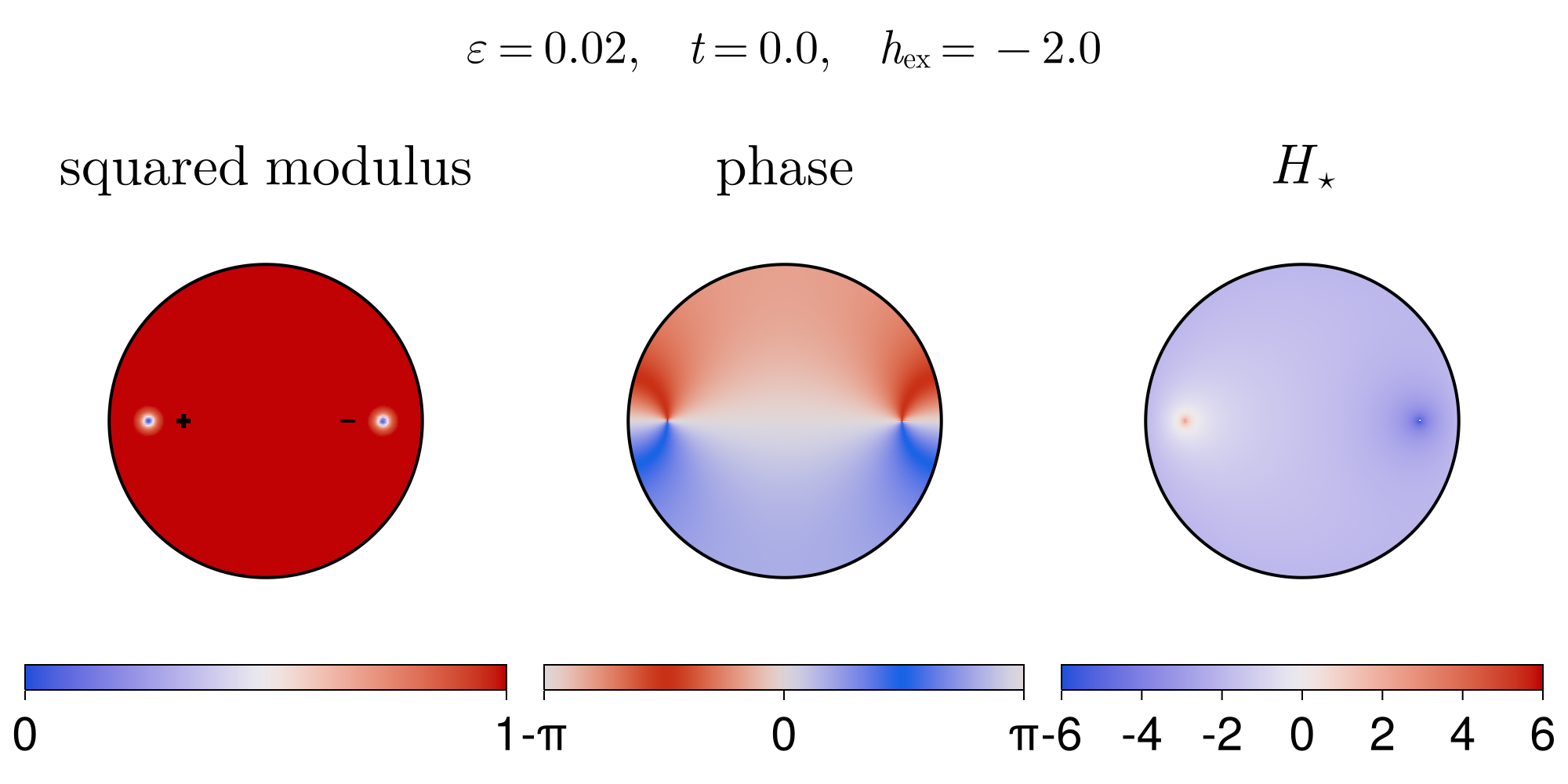}\hfill
  \includegraphics[width=0.5\linewidth]{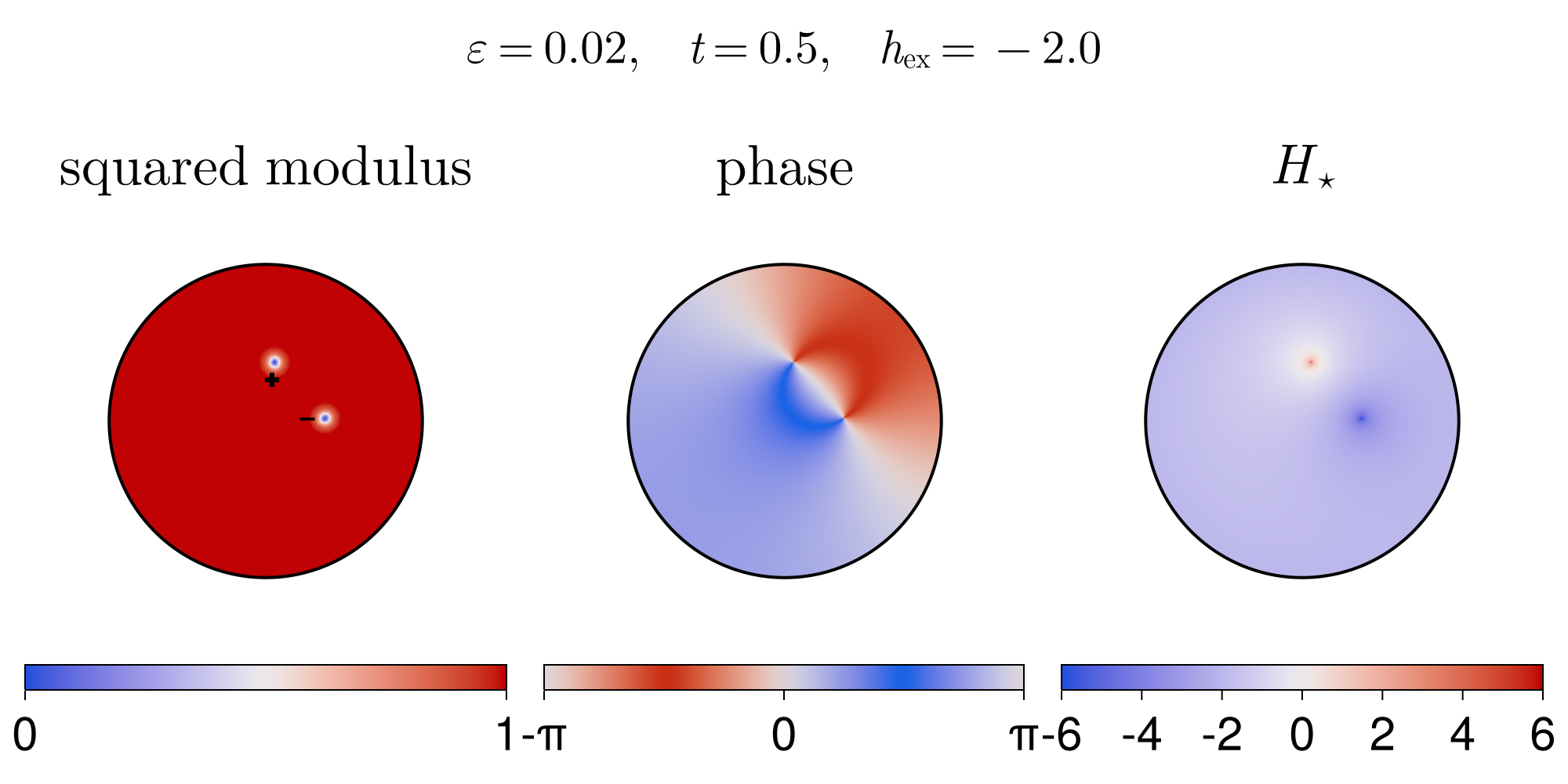}\\
  \caption{\textbf{Case 3} -- $h_{\rm ex}=-2$ : Squared modulus and phase of
    $u_{\epsilon}^*(t)$, with amplitude of $h_*$ for different times $t$.}
  \label{fig:smooth_case3_-2}
\end{figure}

\begin{figure}[p!]
  \includegraphics[width=0.5\linewidth]{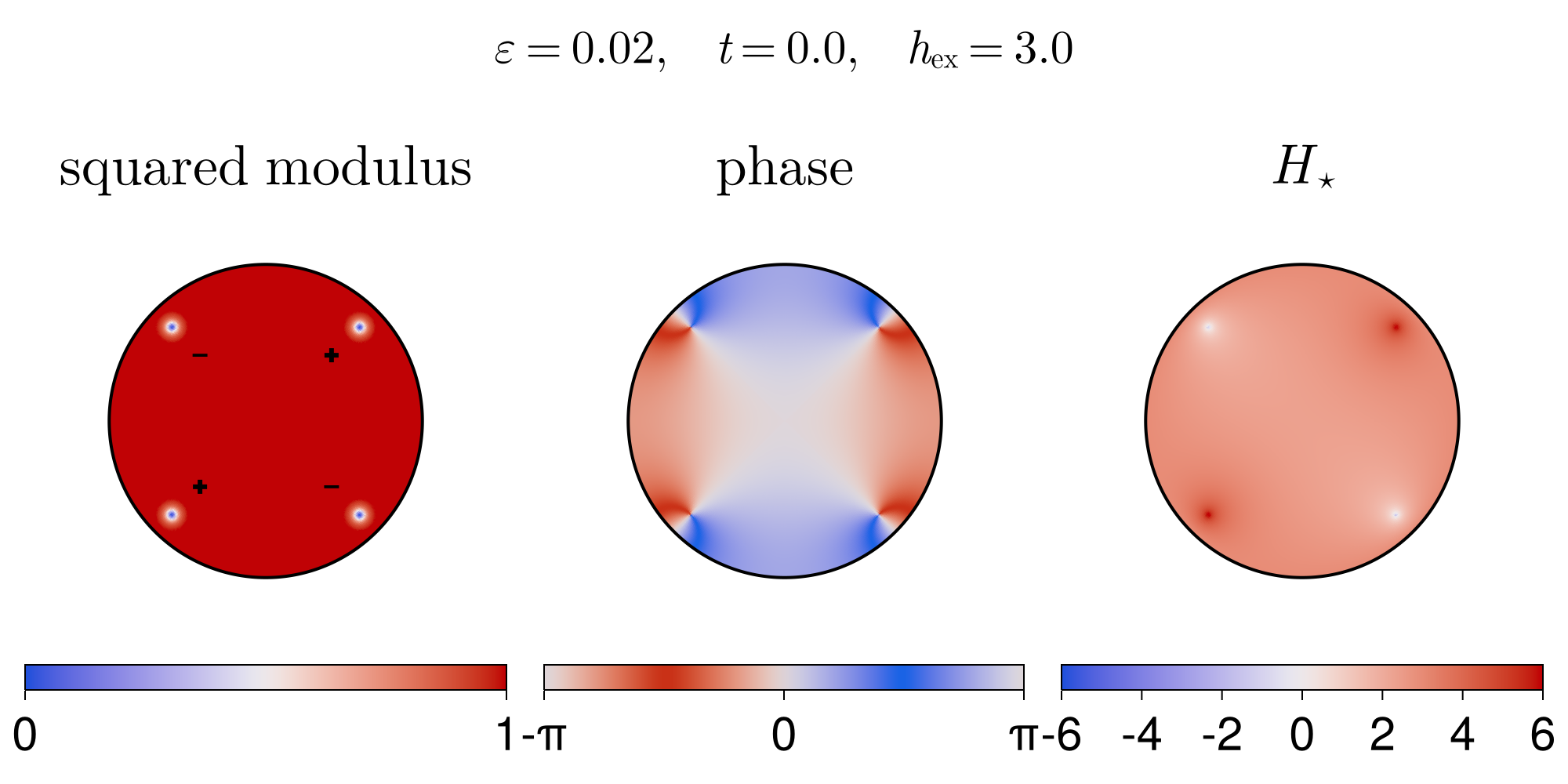}\hfill
  \includegraphics[width=0.5\linewidth]{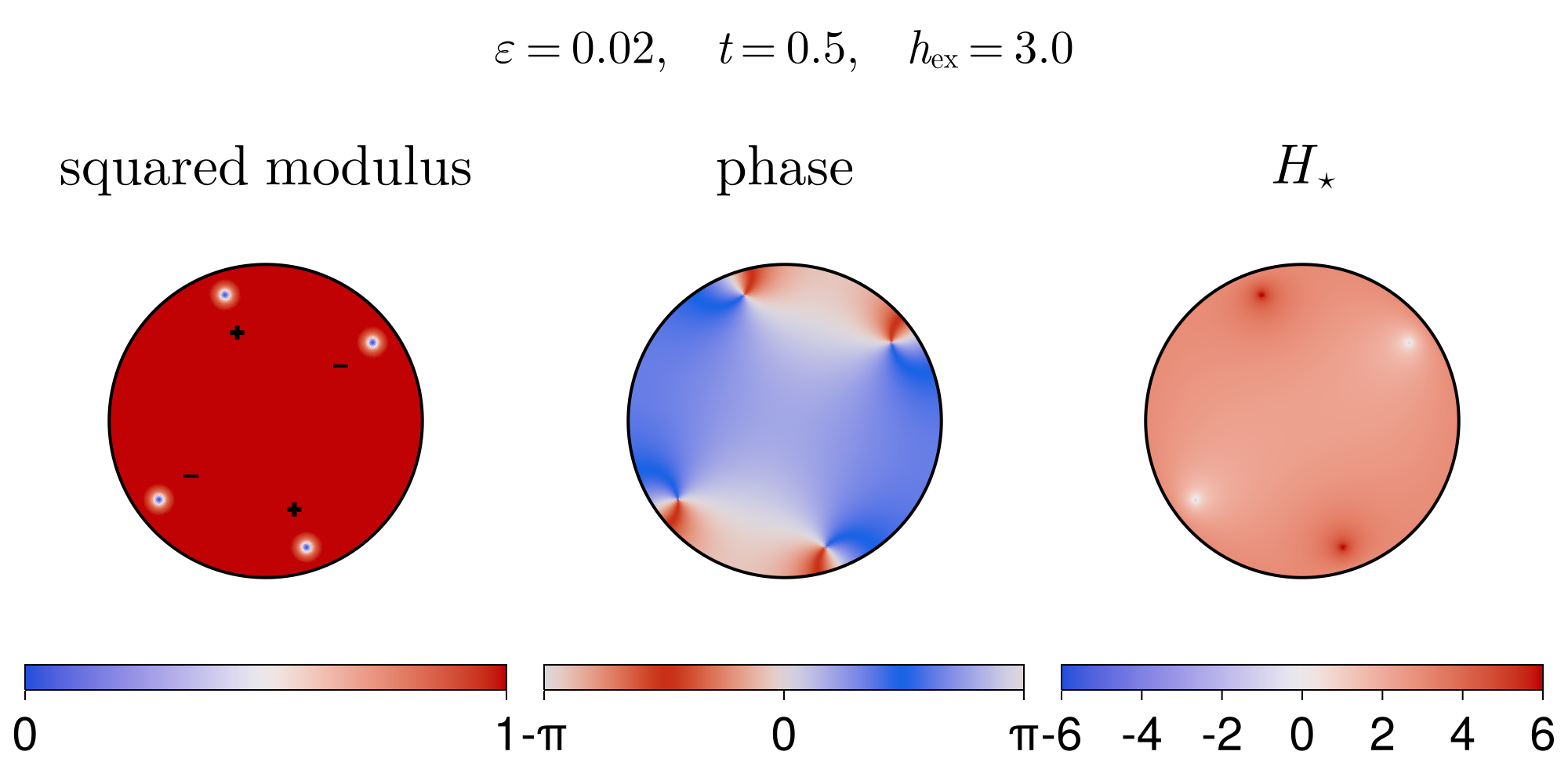}\\
  \caption{\textbf{Case 4} -- $h_{\rm ex}=3$ : Squared modulus and phase of
    $u_{\epsilon}^*(t)$, with amplitude of $h_*$ for different times $t$.}
  \label{fig:smooth_case4_3}
\end{figure}

\begin{figure}[p!]
  \includegraphics[width=0.5\linewidth]{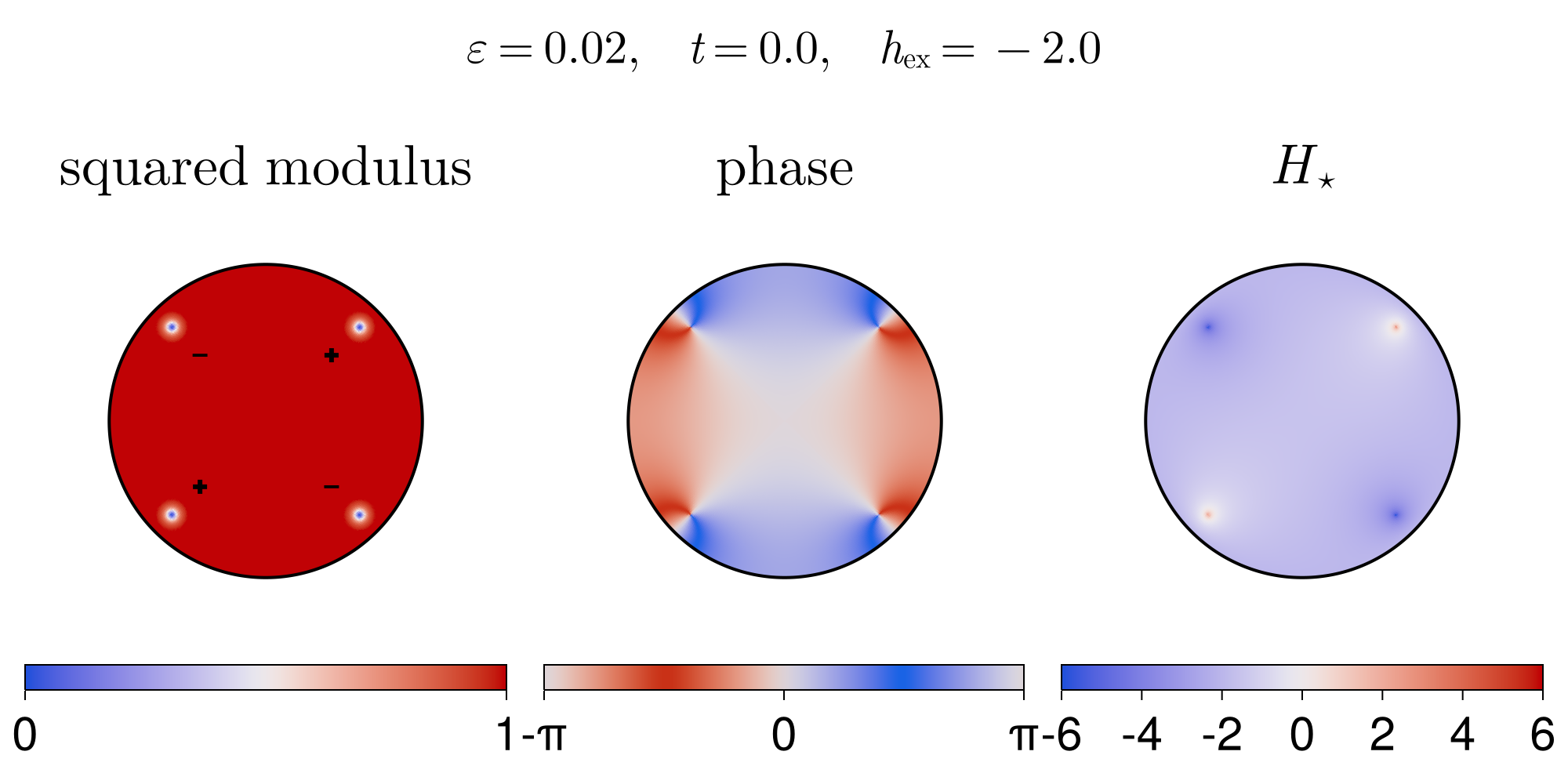}\hfill
\includegraphics[width=0.5\linewidth]{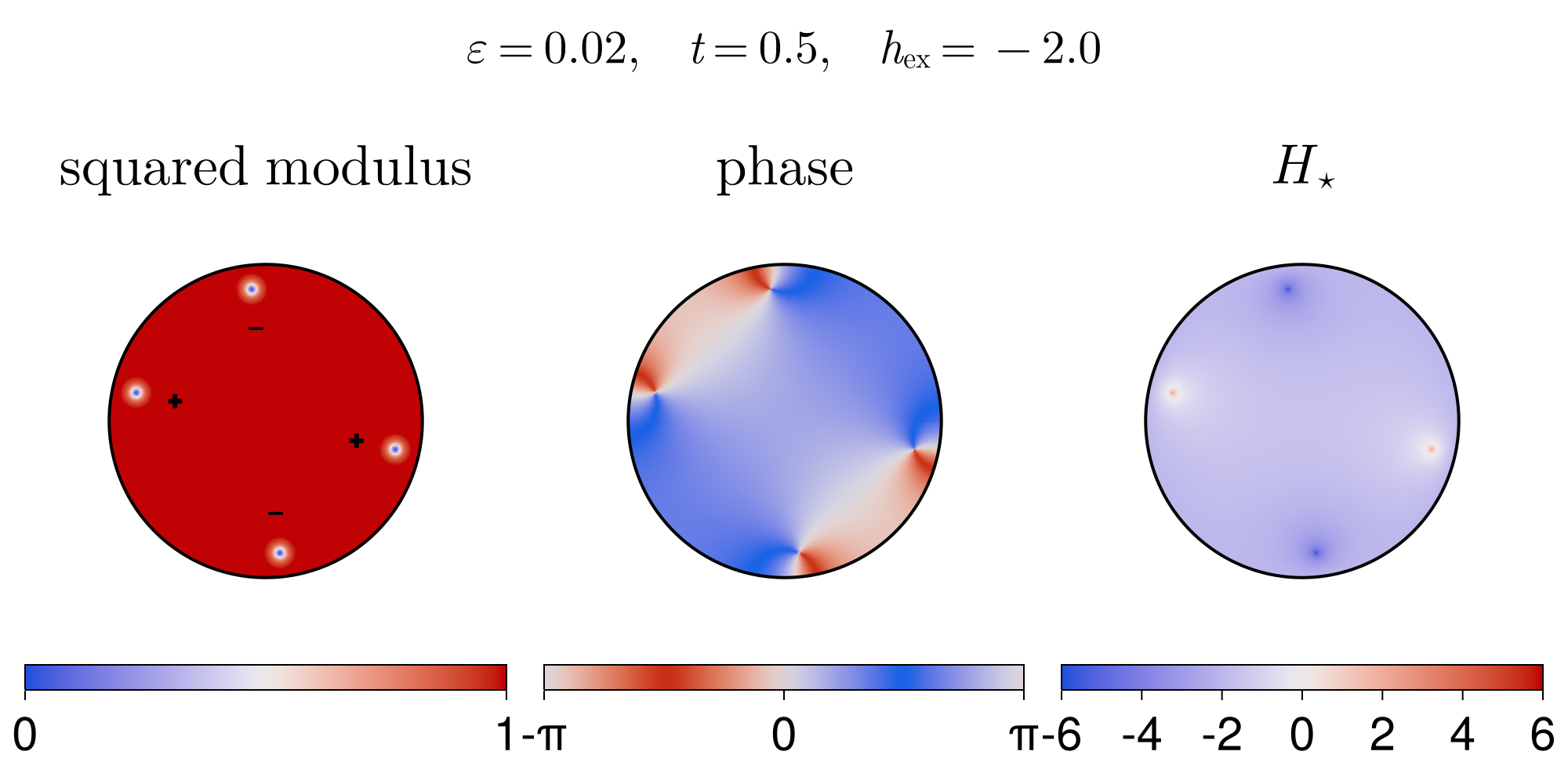}\\
  \caption{\textbf{Case 4} -- $h_{\rm ex}=-2$ : Squared modulus and phase of
    $u_{\epsilon}^*(t)$, with amplitude of $h_*$ for different times $t$.}
  \label{fig:smooth_case4_-2}
\end{figure}

\clearpage
\subsection{Numerical investigation of the small $\epsilon$ limit}

We focus now on Case 1, in the case $h_{\rm ex} =3$: there are two vortices with
identical winding number $+1$ and initial position $(\pm0.5,0)$. From
Figure~\ref{fig:trajectories}, we know that we expect, for small values of
$\epsilon$, the vortices to move around the circle of radius $0.5$ and
centered at $(0,0)$.

\subsubsection{Computational framework}

In order to compare the numerical method presented in Section~\ref{sec:method_1}
to the exact solution of the TDGL equation \eqref{eq:GPE_mag}, we compute a reference
solution $u_\epsilon(t)$ with the finite element solver \texttt{Gridap.jl}
\cite{BV20} and meshing tool GMSH \cite{GR09}. The magnetic TDGL
equation \eqref{eq:GPE_mag} is solved for initial conditions given by the vortex
configuration from Case 1 and the smoothing procedure described in
Section~\ref{ssec:WP}. We use a forward Euler time discretization scheme and
follow a numerical scheme adapted from \cite{GX23}.
The mesh size is of order $\delta x=5\cdot10^{-2}$ with local \emph{a priori} refined
mesh on the vortex trajectory with mesh size $\delta x=3.5\cdot10^{-3}$, see
Figure~\ref{fig:mesh}. The time step is $\delta t=10^{-5}$. The finite element
solver is then used to obtain various reference solutions $u_\epsilon$ for
$\epsilon \in \{0.07, 0.1, 0.13\}$, up to $T=1$. We also compute a
reference solution for $\epsilon = 0.05$, with a finer refinement. For this finer mesh, the total number of degrees of freedom for
  $u_\epsilon$, $A_\epsilon$ and $\phi_\epsilon$ together is
  1.422.686. Note that the simulation with such a mesh can take several days
for small values of $\epsilon$: for instance, in our case, the simulations for
$\epsilon = 0.05$ took about ten days to run with a non-optimized code on a
small sized cluster.

The reconstructed approximation $u_{\epsilon}^*(t)$ from
Section~\ref{sec:method_1} is obtained from the trajectories in the limit
$\epsilon\searrow0$, with $r_0 = 0.3$ and $m=128$ for the reconstruction step. The
Hamiltonian dynamics is solved numerically with a RK4 method, with time step
$\delta t = 10^{-5}$ and $m=128$. Note that this time, the simulation only takes
a few seconds on a personal laptop.

\begin{figure}[h!]
  \centering
  \includegraphics[width=0.4\linewidth]{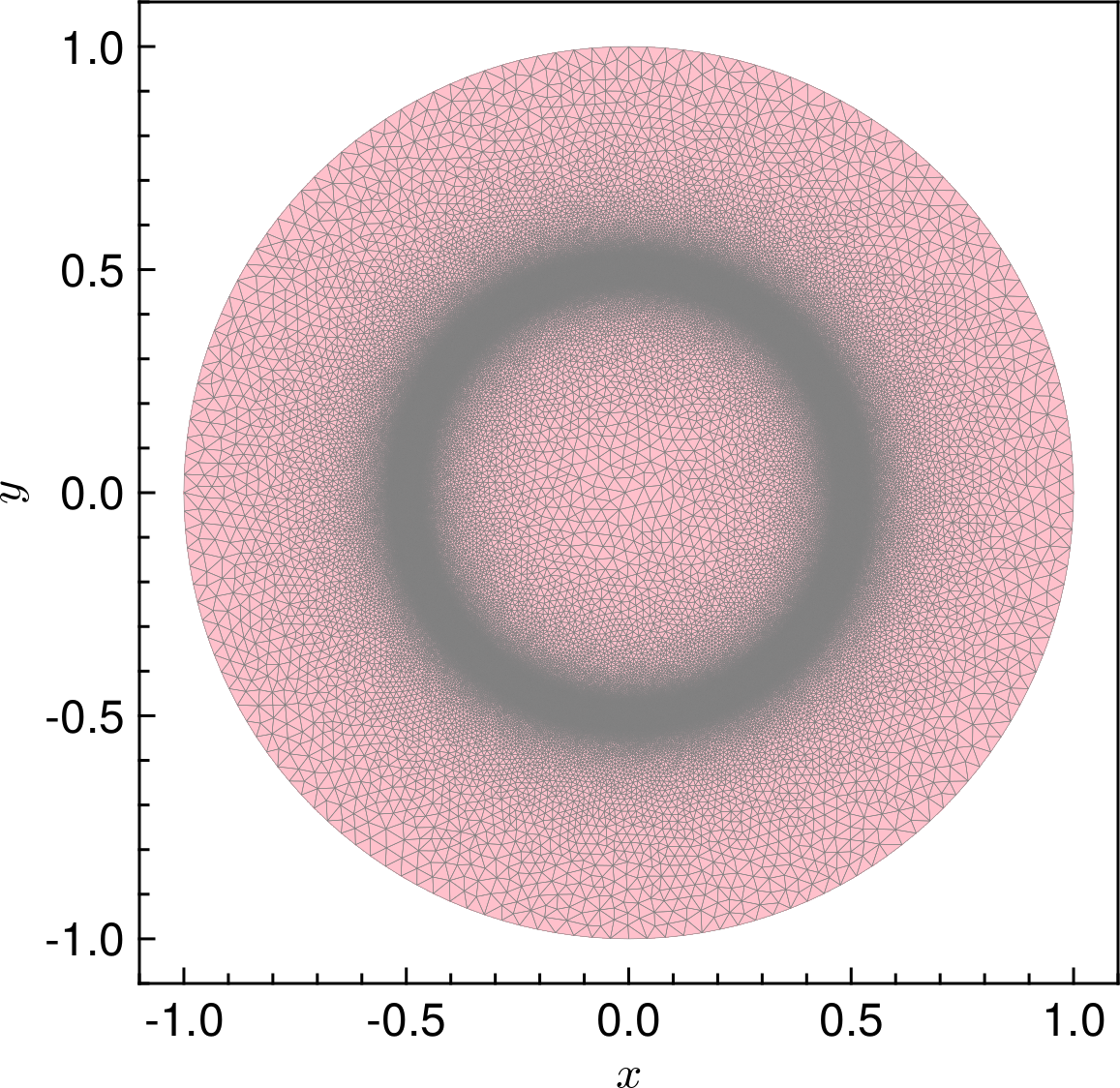}
  \caption{Mesh of the unit disk used to obtain reference solutions.}
  \label{fig:mesh}
\end{figure}

\subsubsection{Numerical illustration of the singular limit $\epsilon\searrow0$}

In order to formally compare the
reference solution $u_\epsilon(t)$ and the reconstructed approximation
$u_\epsilon^*(t)$, we evaluate and compare different physical quantities,
namely: the vortex trajectories, the super-current, the induced magnetic fields
and the order parameter.

First, by tracking its isolated zeros, for instance using one of the algorithms
introduced in \cite{DLVN25,KSDH23}, we are able to localize the vortices of the
reference solution $u_\epsilon(t)$ and plot the associated trajectories in
Figure~\ref{fig:traj_eps}, in the spirit of the study realized in \cite{BT14}.
Then, we plot in Figure~\ref{fig:error_eps} the following quantities:

\begin{equation*}
  \frac{\norm{j_{A_\epsilon}(u_\epsilon)(t) -
      j_{A_*}(u_{\epsilon}^*)(t)}_{L^{\frac43}}}
  {\norm{j_{A_\epsilon}(u_\epsilon)(t)}_{L^{\frac43}}}, \quad
  \frac{\norm{h_\epsilon(t) - h_*(t)}_{L^{2}}}
  {\norm{h_\epsilon(t)}_{L^{2}}}, \quad\text{and}\quad
  \frac{\norm{u_\epsilon(t) - u_{\epsilon}^*(t)}_{L^2}}
  {\norm{u_\epsilon(t)}_{L^2}}.
\end{equation*}

Despite the increasing errors along time, the
smaller $\epsilon$, the better the approximation of these quantities. This
motivates further such a scheme to reduce the computational time required to
solve the TDGL equation in the regime of small $\epsilon$. One should keep in
mind though that this approximation remains valid as long as the vortices stay
far from each other and from the boundary and is not capable for instance of describing the annihilation of vortices and the resulting nonlinear phenomena.

\begin{figure}[h!]
  \centering
  \includegraphics[width=0.33\linewidth]{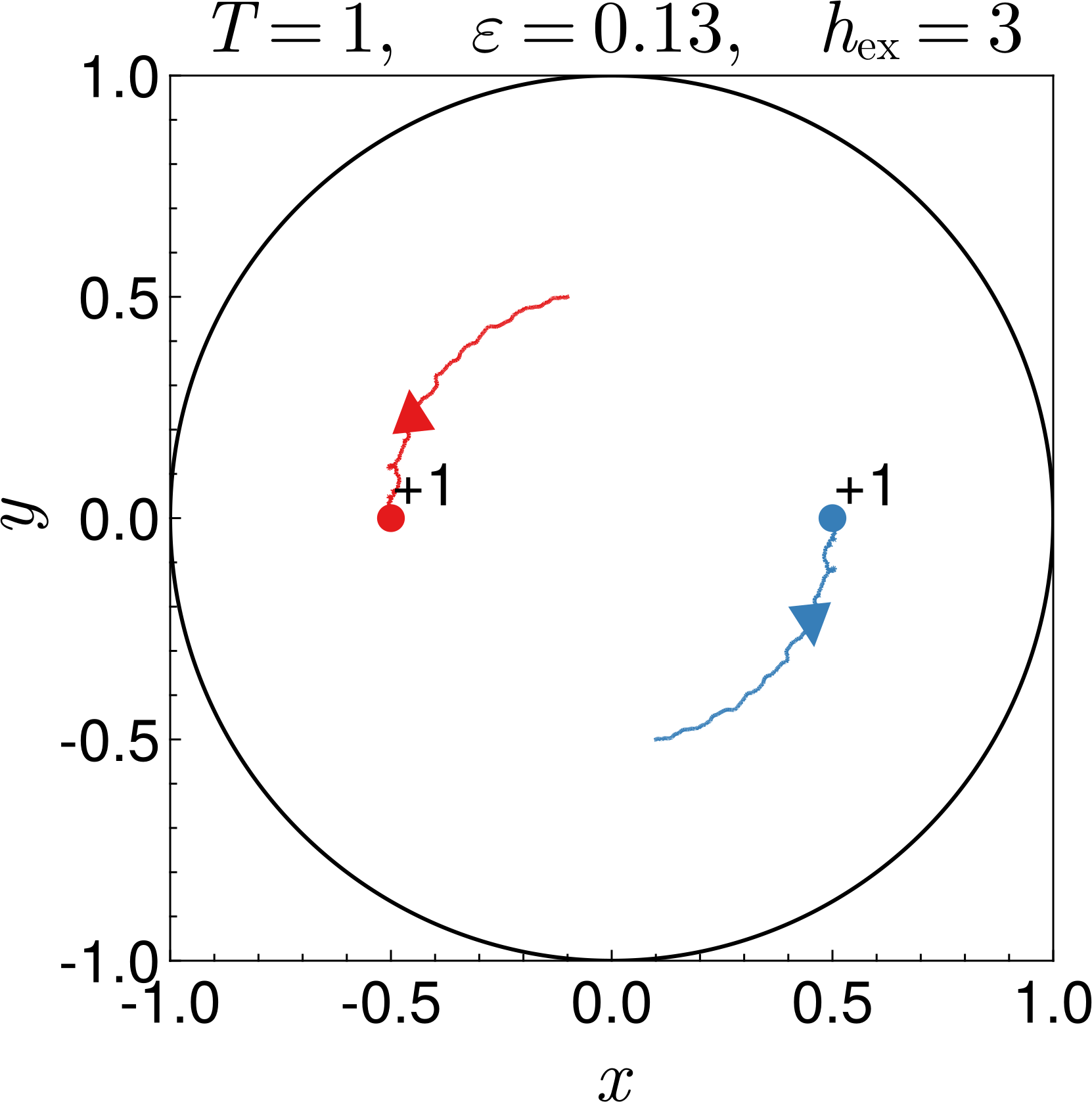}\hfill
  \includegraphics[width=0.33\linewidth]{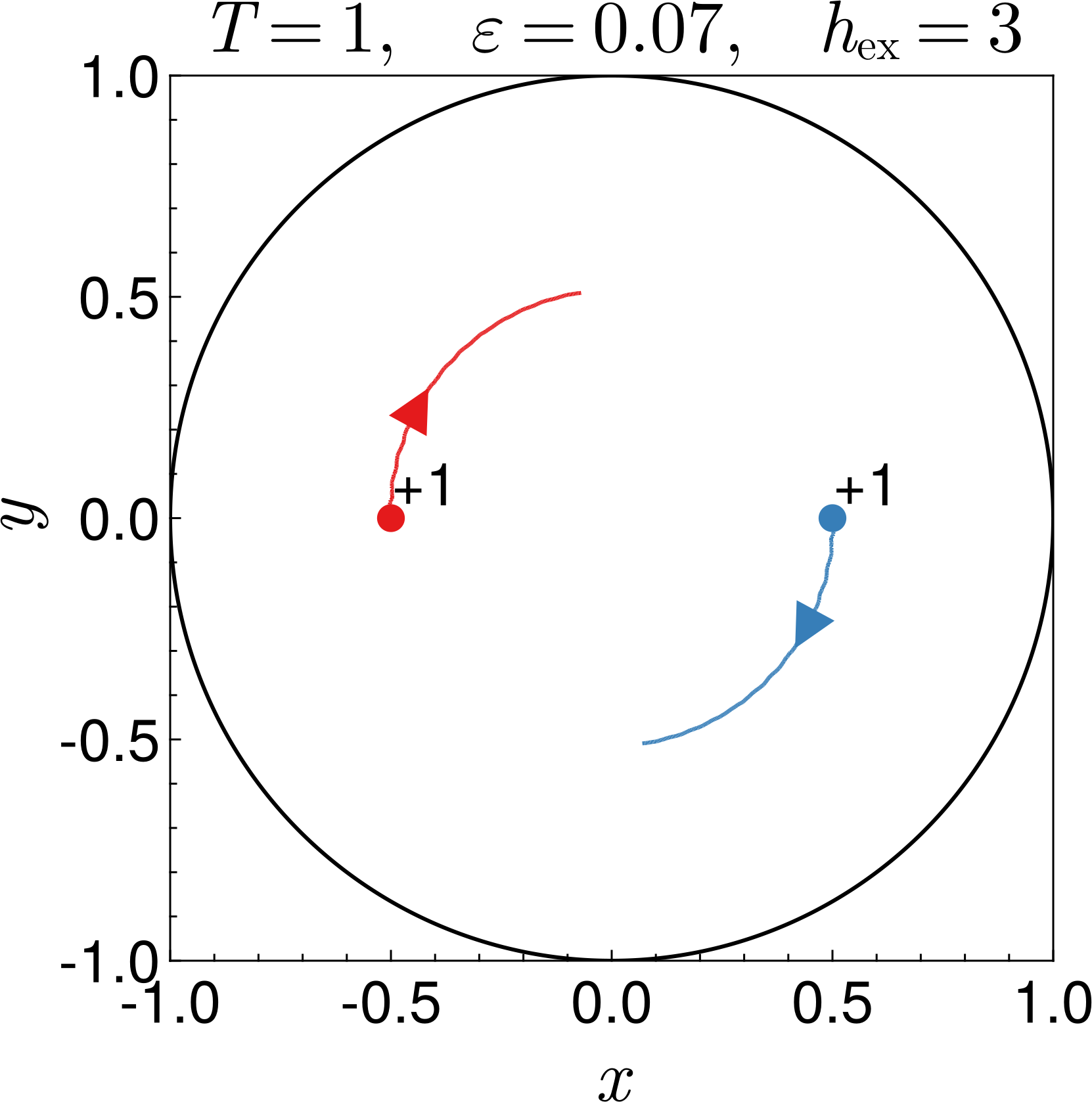}\hfill
  \includegraphics[width=0.33\linewidth]{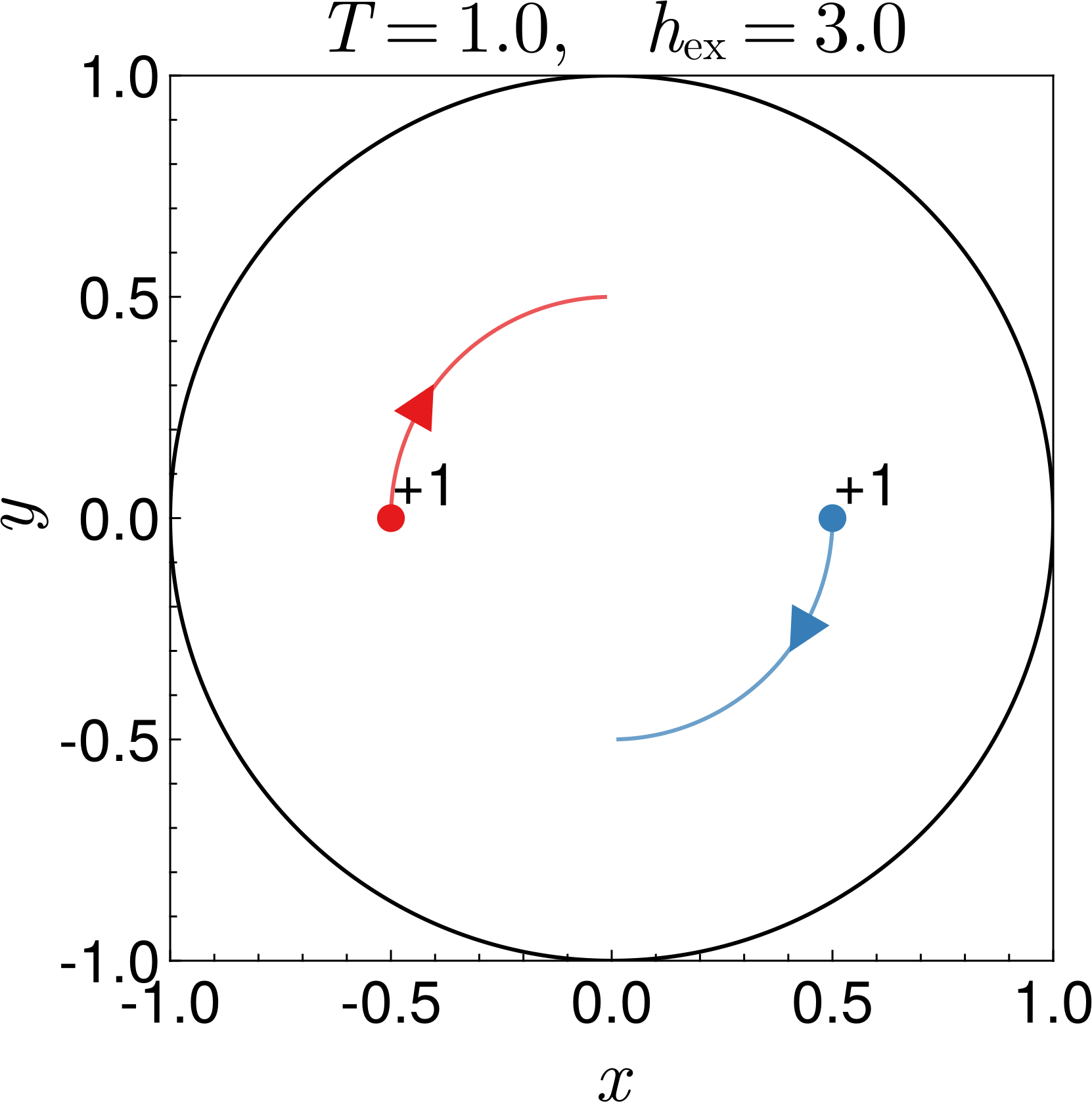}\\
  \caption{Vortex trajectories in the limit case for different values of
    $\epsilon > 0$ and for $\epsilon=0$ (right).}
  \label{fig:traj_eps}
\end{figure}

\begin{figure}[h!]
  \centering
  \includegraphics[width=0.48\linewidth]{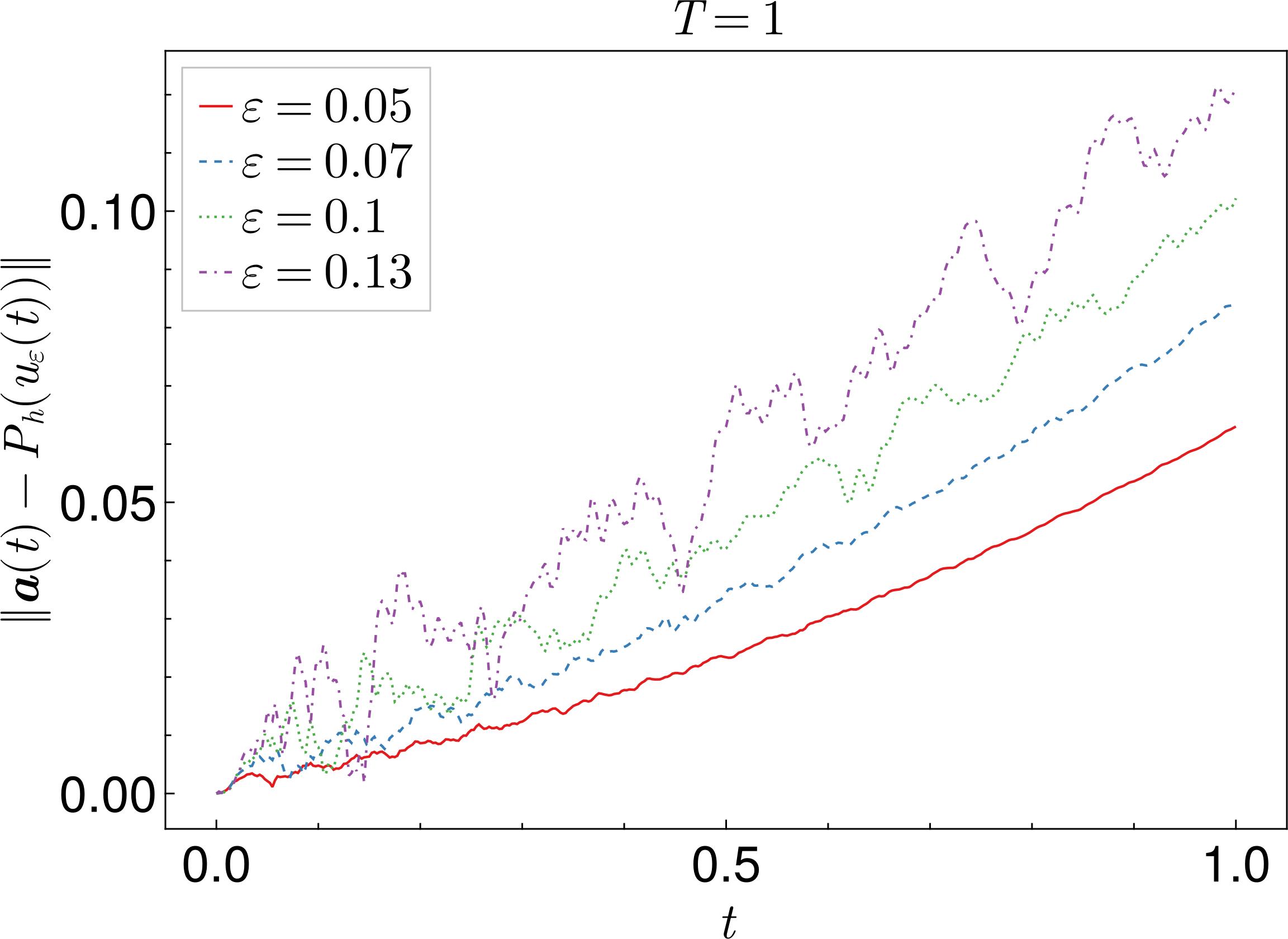}\hfill
  \includegraphics[width=0.48\linewidth]{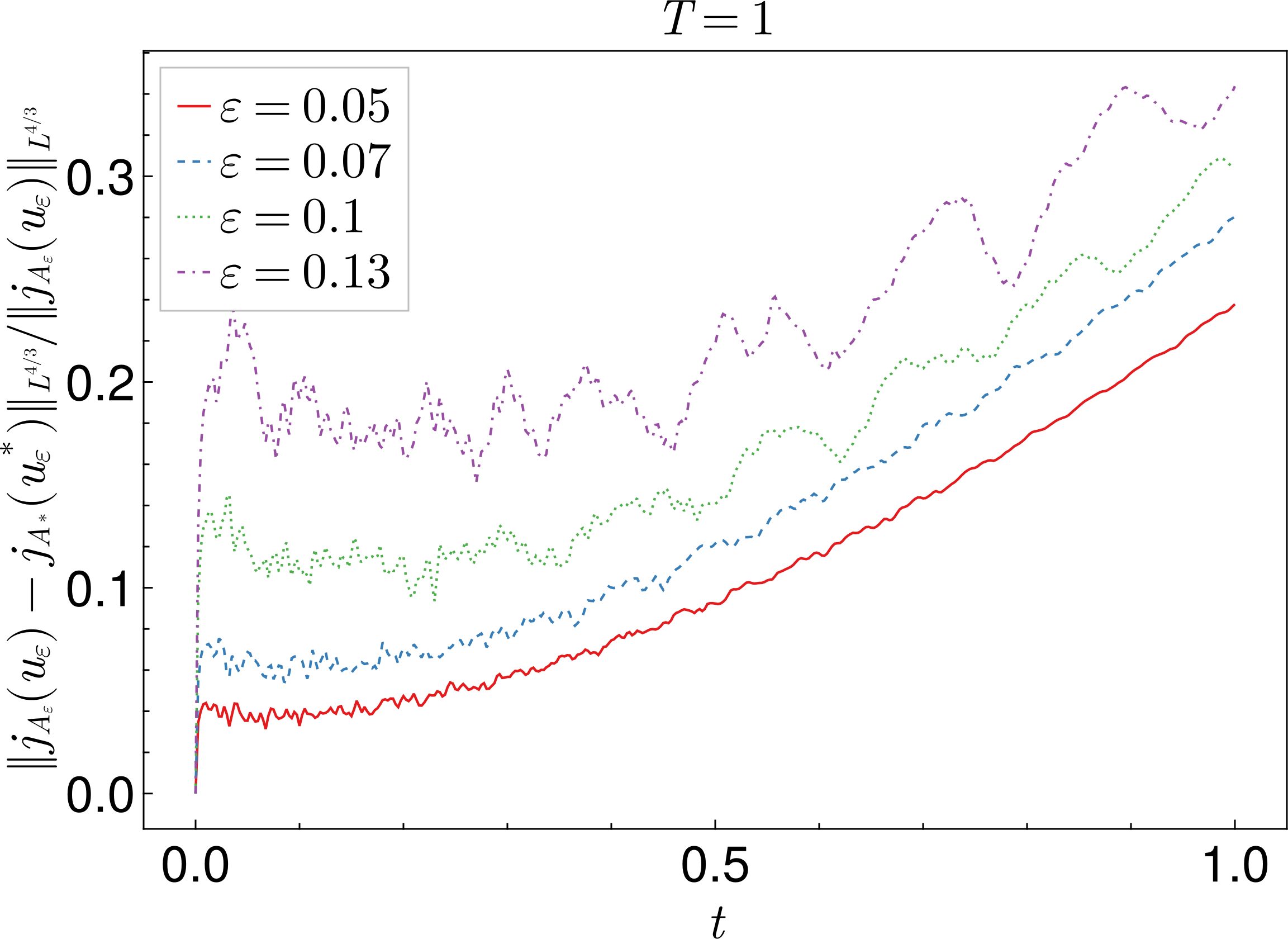}\\
  \includegraphics[width=0.48\linewidth]{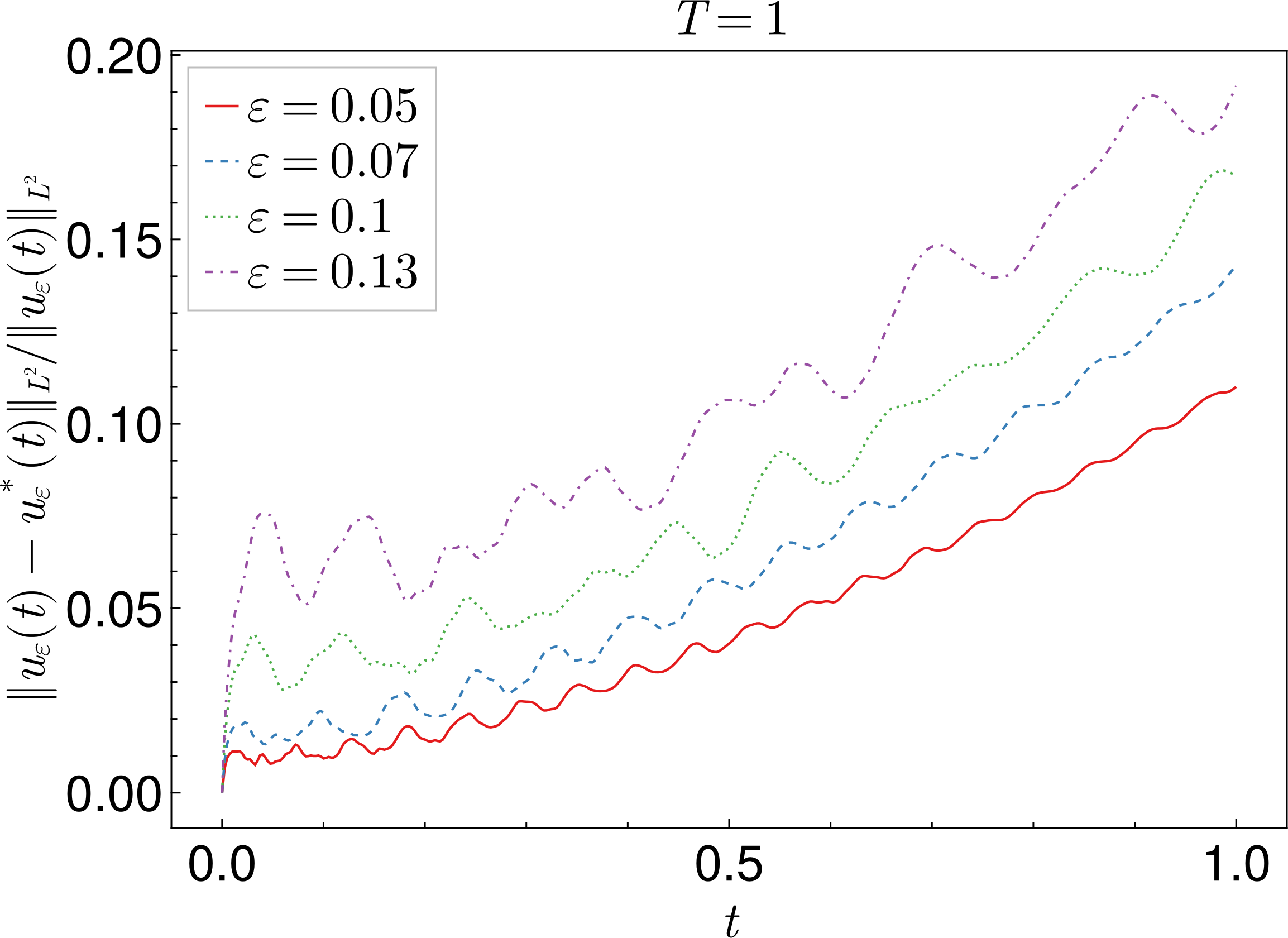}\hfill
  \includegraphics[width=0.48\linewidth]{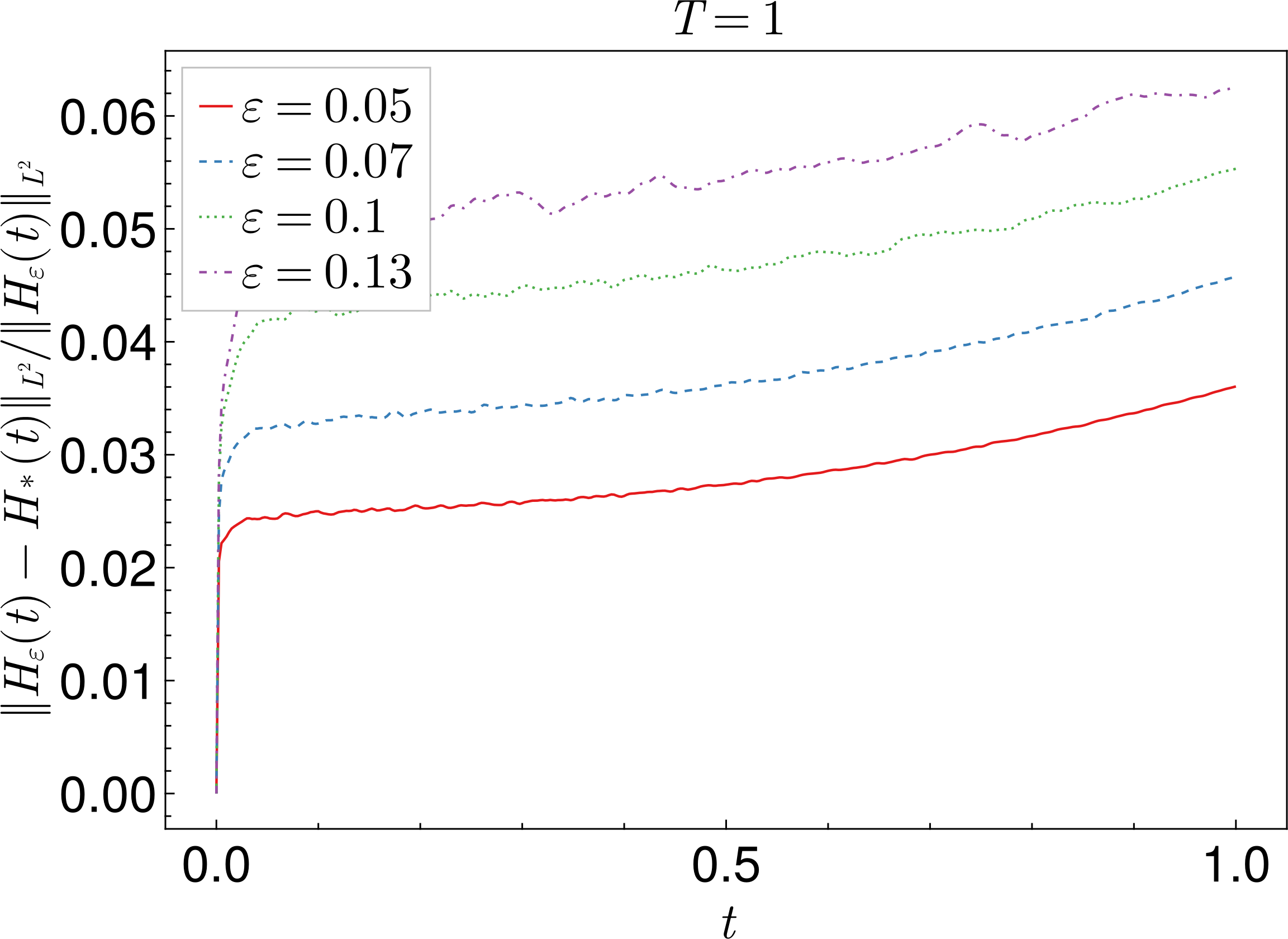}\\
  \caption{Error on different quantities up to time $T$ for case 1. (Top left)
    Error on vortices positions in $\Omega$. (Top right)
    $L^{\frac43}$ norm of the error on the super-currents.
    (Bottom left) $L^2$ norm of the error between $u_\epsilon(t)$
    and the reconstructed wave function $u_\epsilon^*(t)$. (Bottom
    right) $L^2$ norm of the error between $h_\epsilon(t)$ and the
    reconstructed magnetic field $h_*(t)$ .}
  \label{fig:error_eps}
\end{figure}

Finally, one can exploit the numerical results to extrapolate convergence rates with respect to $\epsilon$. In Figure~\ref{fig:linreg_eps}, we perform a linear regression on the (absolute) error on the magnetic field and the super-current. For the magnetic field, the linear regression is performed at four different times while for the super-current, given the noisy results observed in Figure~\ref{fig:error_eps}, the linear regression is performed after averaging the errors over time. For the magnetic field, the regressions clearly show that the error increase in time, but also that the power of $\epsilon$ decreases in time. As for the super current, the regression seems to indicate a slower convergence than for the magnetic field, with a power of epsilon of about $1/2$.

\begin{figure}[h!]
    \centering
    \includegraphics[width=0.48\linewidth]{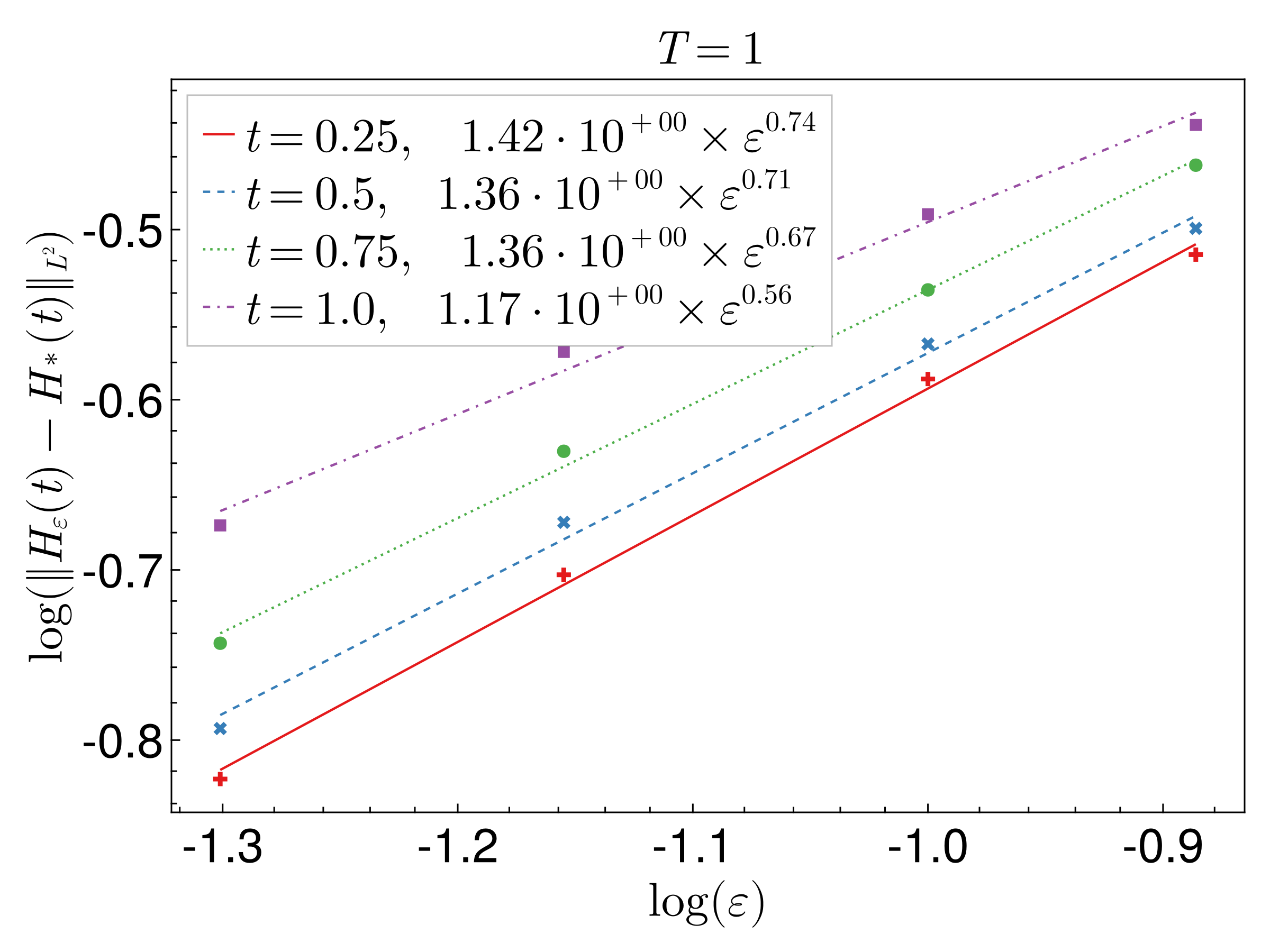}\hfill
    \includegraphics[width=0.48\linewidth]{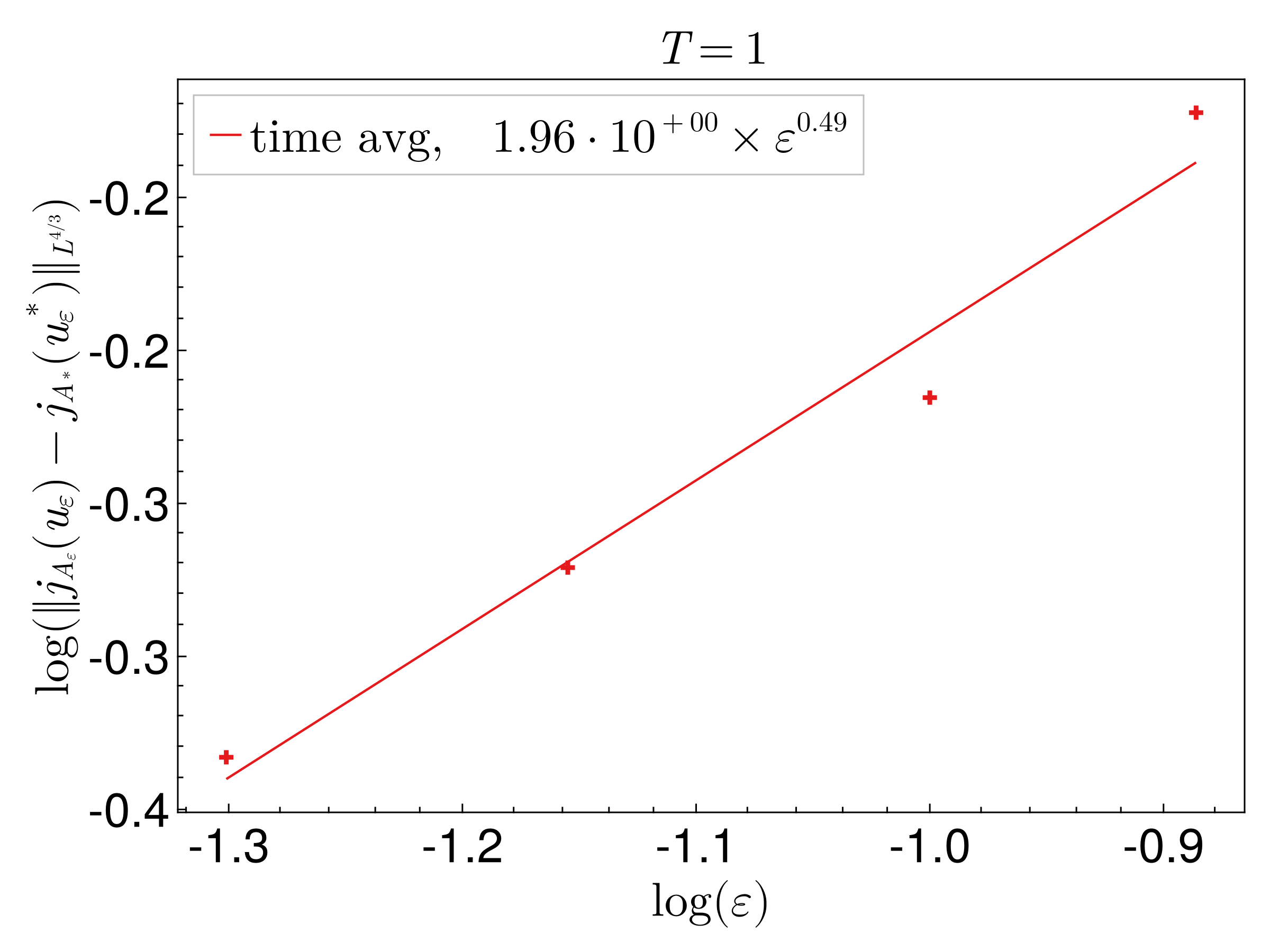}\\
    \caption{Linear regression with respect to $\epsilon$ of the error on the magnetic field and the super-current.}
    \label{fig:linreg_eps}
\end{figure}

\section{The mixed-flow case} \label{sec:mixed-flow}

To conclude, we end this paper by simulating the ODE system
  \eqref{eq:ODE1} in the mixed flow case, that is the dissipative coefficient
  $\alpha_0$ is nonzero. In this case, it is expected that, for a large enough
  external magnetic field $h_{\rm ex}$, the dissipation forces the vortices to
  arrange, for large times, into a lattice pattern \cite{SS07}. In Figure~\ref{fig:dissip_trajectories} we plot, for four different cases, the
  limiting pattern observed at time $T=10$. The discretization parameters of the ODE solver described in Section~\ref{ssec:HD_sim} are
  $\delta t=10^{-3}$ with $m=64$ Fourier modes on the boundary. The initial conditions are given by random positions inside the disc of
  $n\in\{10,30,50,80\}$ vortices with same degree $+1$. As expected, the vortices align according to some pattern for large times. Note also that the
  more numerous the vortices, the larger the magnetic field $h_{\rm ex}$ has to be in order to confine the vortices inside the domain $\Omega$: for instance, with $n=80$ vortices, if $h_{\rm ex} = 100$, we observe numerically that the vortices escape from $\Omega$ in finite time.

  Moreover, we present in Figures~\ref{fig:dissip_approx_case1}--\ref{fig:dissip_approx_case4} the approximate solutions to the full PDE in the mixed-flow case for $\epsilon=0.01$ at different time $t$ obtained with the method introduced in
  Section~\ref{sec:method_1}, showing how the vortices are converging towards the lattice pattern.

  \begin{table}[h!]
    \begin{tabular}{@{}llllll@{}}
      \toprule
      \textbf{Case}   & $n$ & $h_{\rm ex}$ & $\alpha_0$ & $\beta_0$ & vortex degree \\ \midrule
      \textbf{Case 1} & 10  & 50           & 1          & 1         & $+1$ for all \\
      \textbf{Case 2} & 30  & 100          & 1          & 5         & $+1$ for all \\
      \textbf{Case 3} & 50  & 150          & 1          & 10        & $+1$ for all \\
      \textbf{Case 4} & 80  & 250          & 1          & 10        & $+1$ for all \\ \bottomrule
    \end{tabular}
    \caption{Initial configurations for simulation of the ODE system
      \eqref{eq:ODE1} in the mixed-flow case.}\label{tab:dissip_cases}
  \end{table}

  \begin{figure}[h!]
    \hfill{~}
    \includegraphics[width=0.32\linewidth]{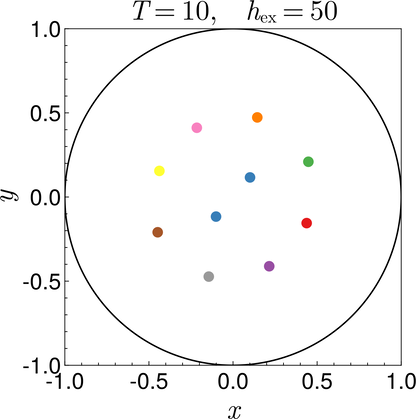}\hfill
    \includegraphics[width=0.32\linewidth]{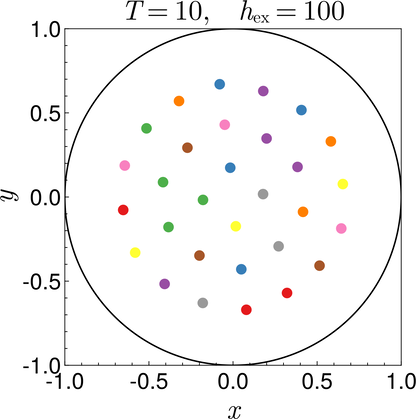}
    \hfill{~}\\ \hfill{~}
    \includegraphics[width=0.32\linewidth]{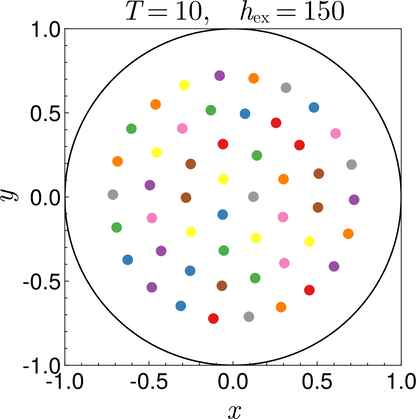}\hfill
    \includegraphics[width=0.32\linewidth]{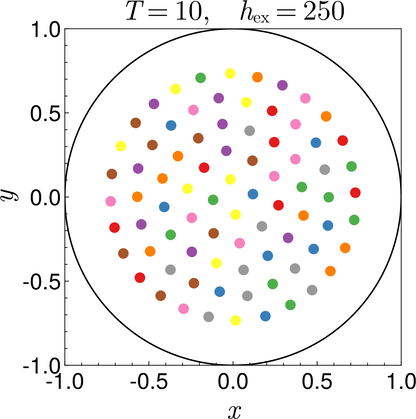}
    \hfill{~}\\
    \caption{Asymptotic pattern of point vortices under the mixed-flow ODE
      \eqref{eq:ODE1}. From left to right, top to bottom: case 1, case 2,
      case 3 and case 4 from Table~\ref{tab:dissip_cases}.}
    \label{fig:dissip_trajectories}
  \end{figure}

  \begin{figure}[p!]
    \includegraphics[width=0.45\linewidth]{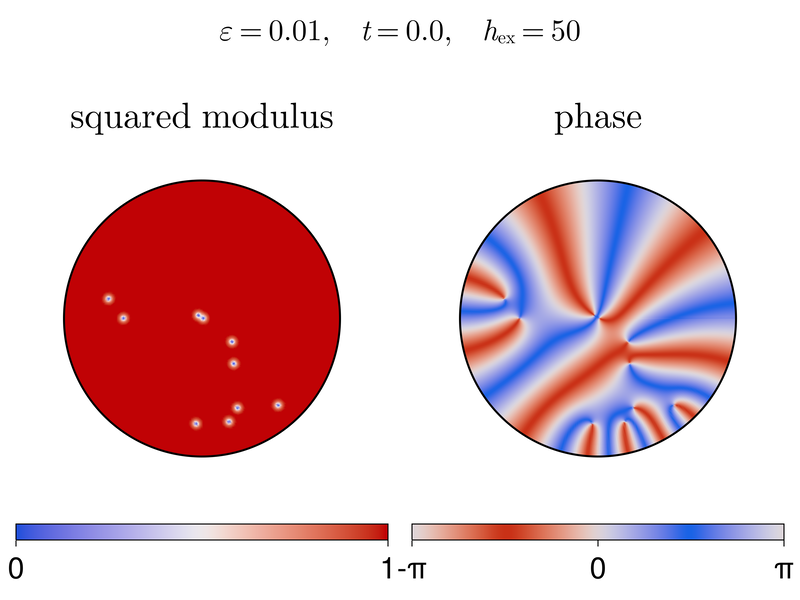}\hfill
    \includegraphics[width=0.45\linewidth]{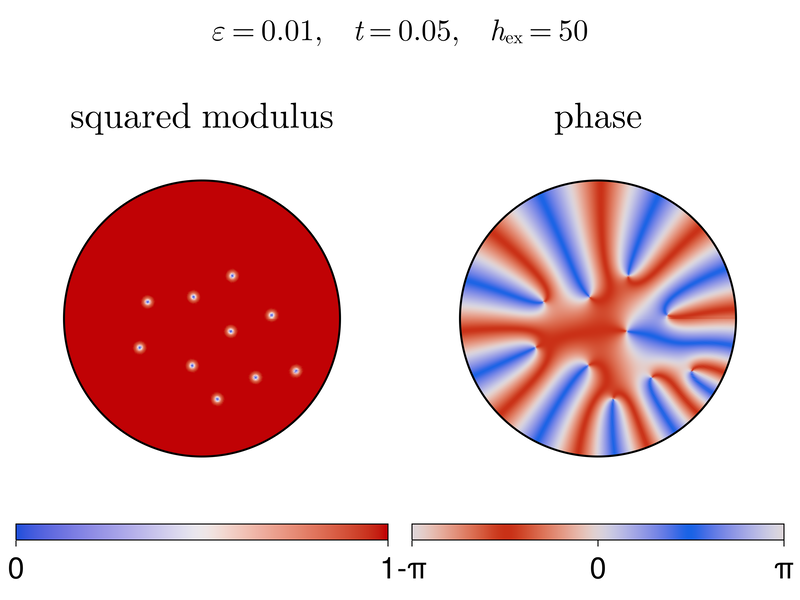}\\
    \includegraphics[width=0.45\linewidth]{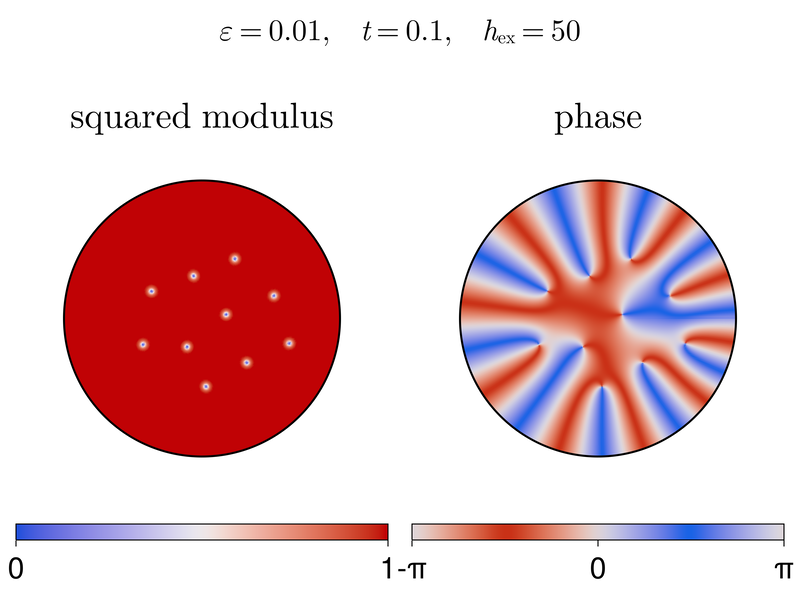}\hfill
    \includegraphics[width=0.45\linewidth]{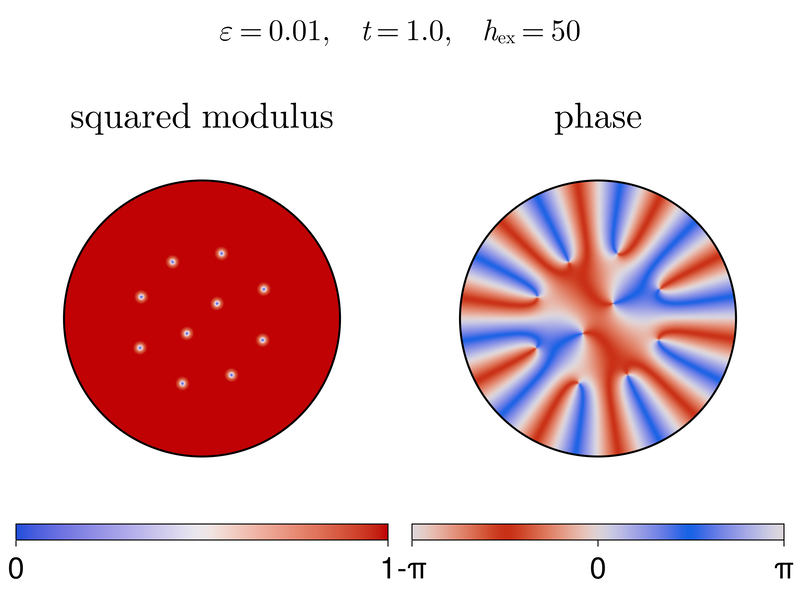}\\
    \caption{Squared modulus and phase of $u_\epsilon^*(t)$ at various times $t$
    under a mixed-flow (Case 1).}
    \label{fig:dissip_approx_case1}
  \end{figure}

  \begin{figure}[p!]
    \includegraphics[width=0.45\linewidth]{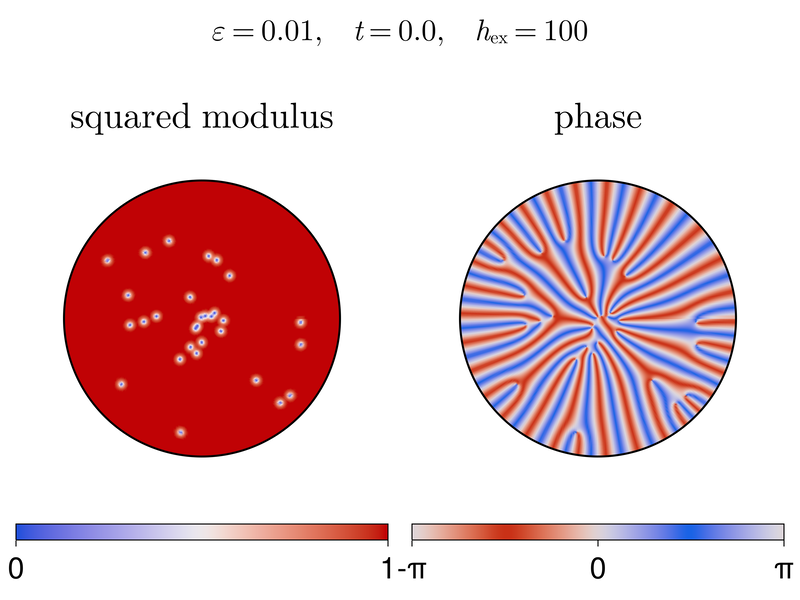}\hfill
    \includegraphics[width=0.45\linewidth]{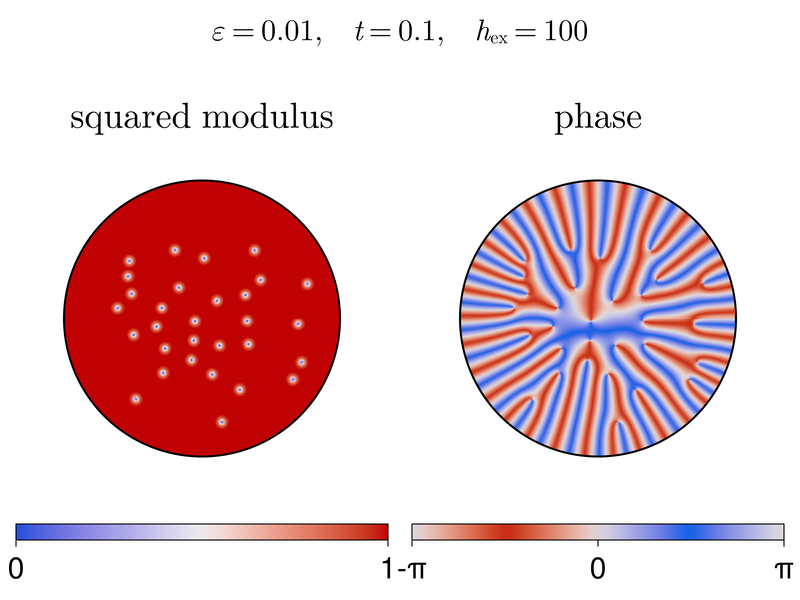}\\
    \includegraphics[width=0.45\linewidth]{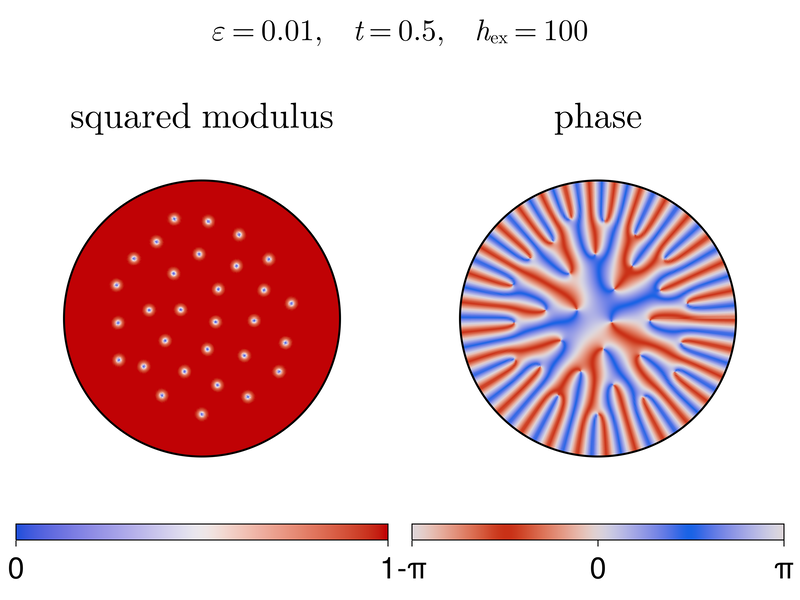}\hfill
    \includegraphics[width=0.45\linewidth]{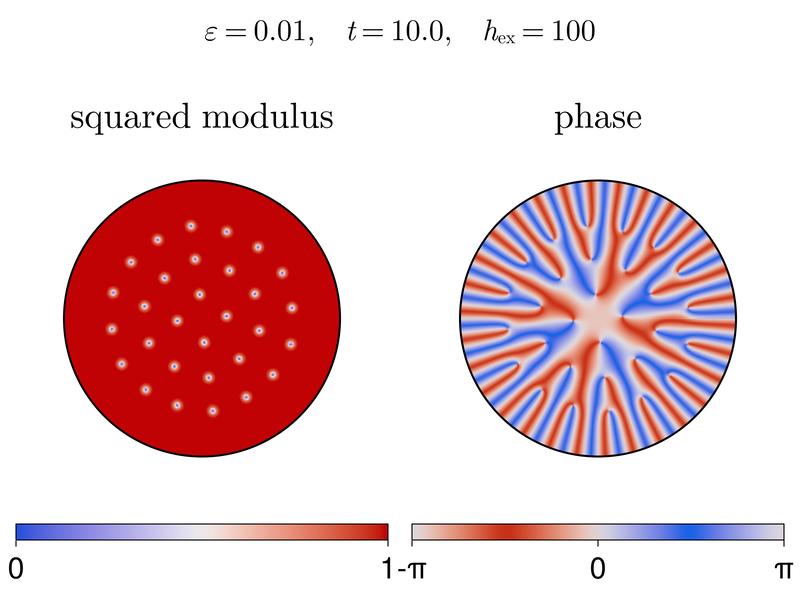}\\
    \caption{Squared modulus and phase of $u_\epsilon^*(t)$ at various times $t$
    under a mixed-flow (Case 2).}
    \label{fig:dissip_approx_case2}
  \end{figure}

  \begin{figure}[h!]
    \includegraphics[width=0.45\linewidth]{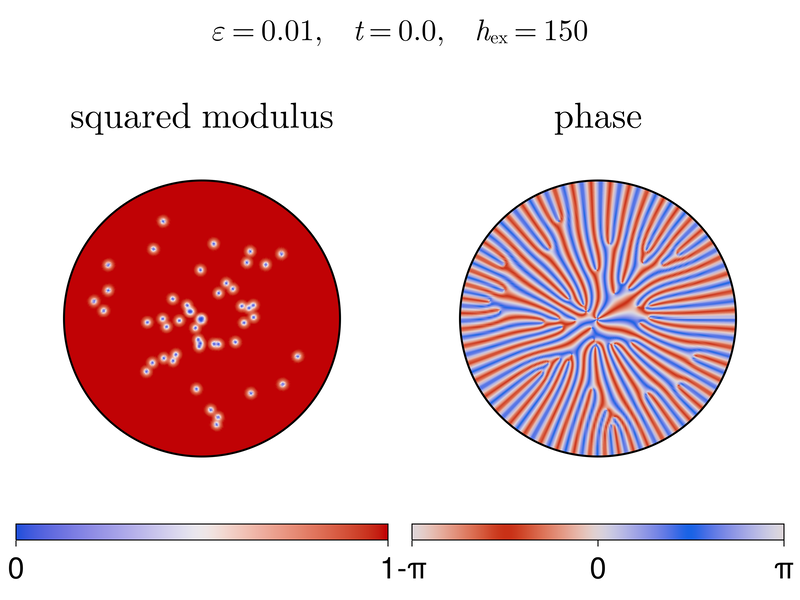}\hfill
    \includegraphics[width=0.45\linewidth]{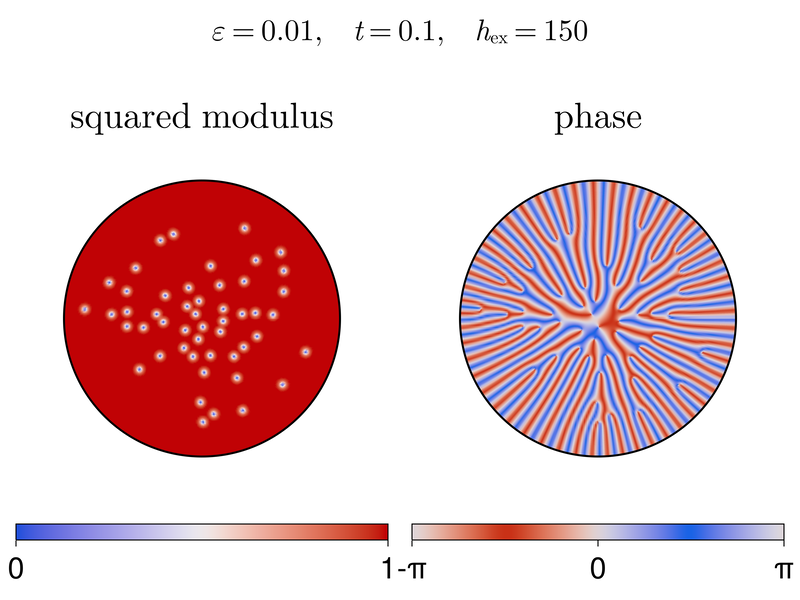}\\
    \includegraphics[width=0.45\linewidth]{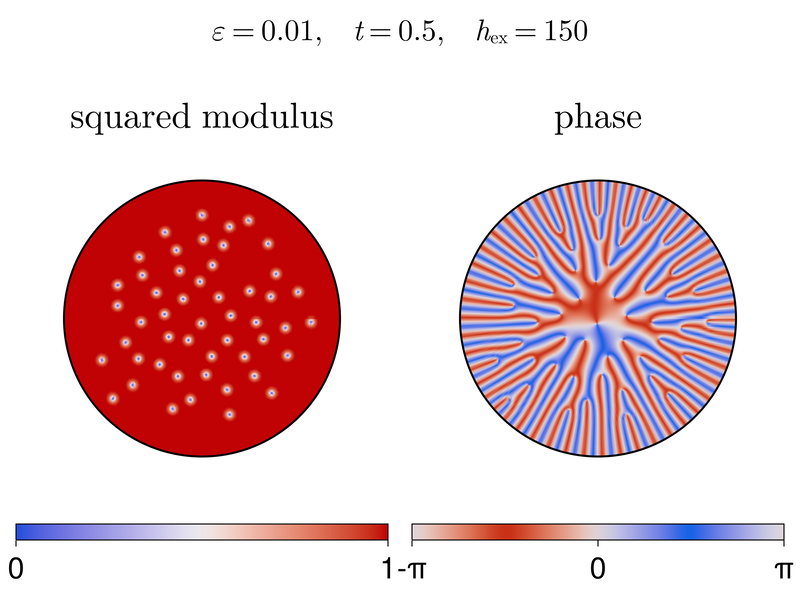}\hfill
    \includegraphics[width=0.45\linewidth]{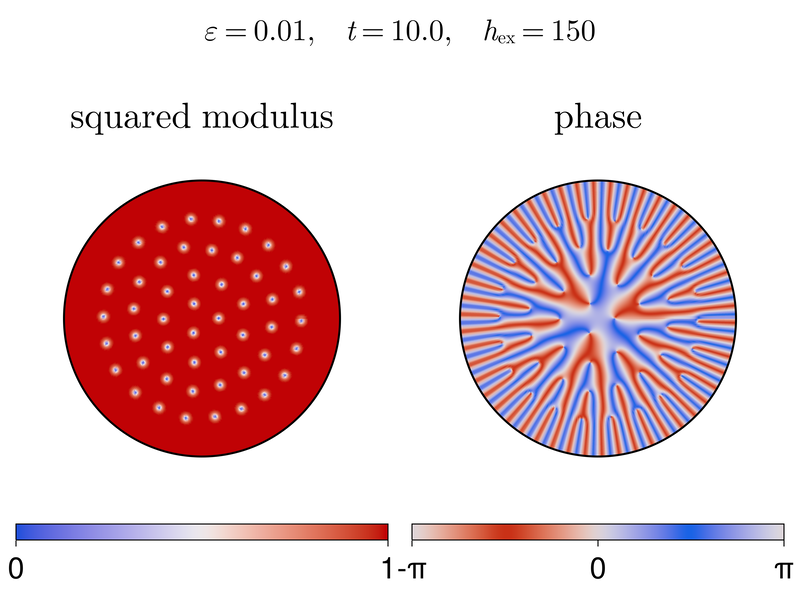}\\
    \caption{Squared modulus and phase of $u_\epsilon^*(t)$ at various times $t$
    under a mixed-flow (Case 3).}
    \label{fig:dissip_approx_case3}
  \end{figure}

  \begin{figure}[h!]
    \includegraphics[width=0.45\linewidth]{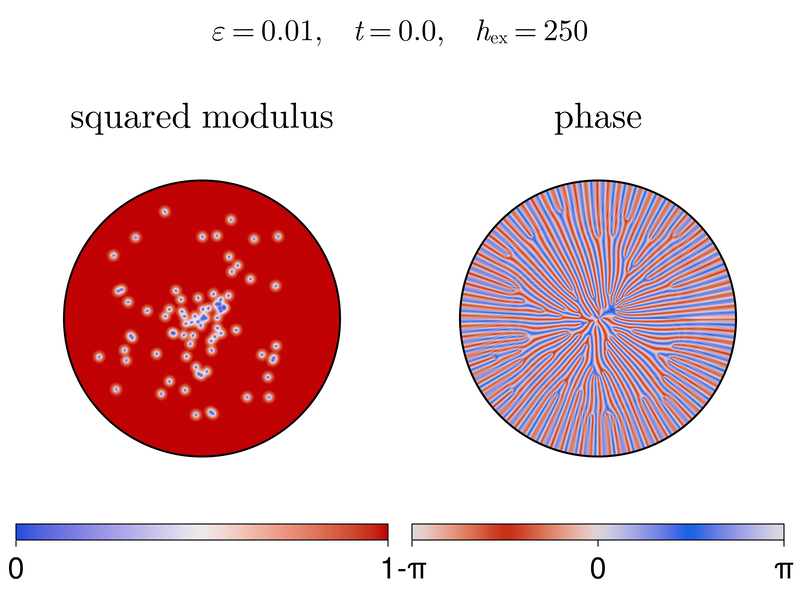}\hfill
    \includegraphics[width=0.45\linewidth]{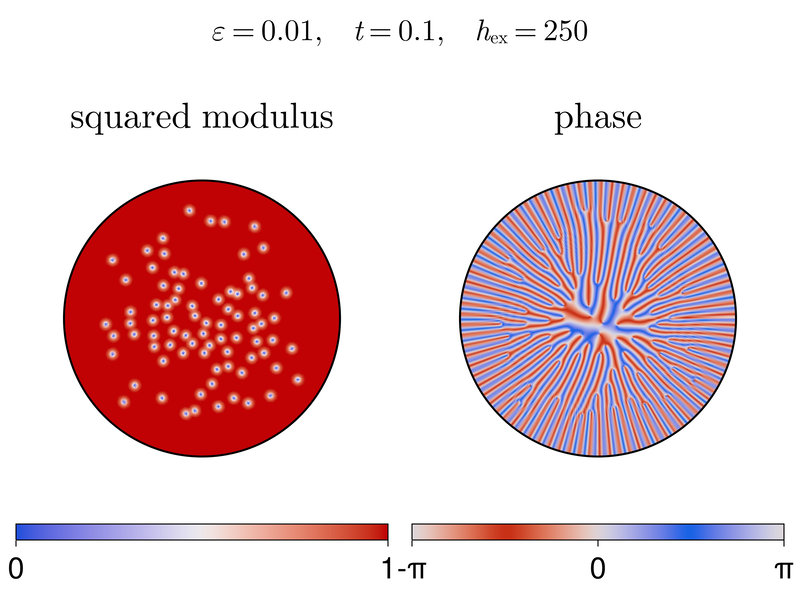}\\
    \includegraphics[width=0.45\linewidth]{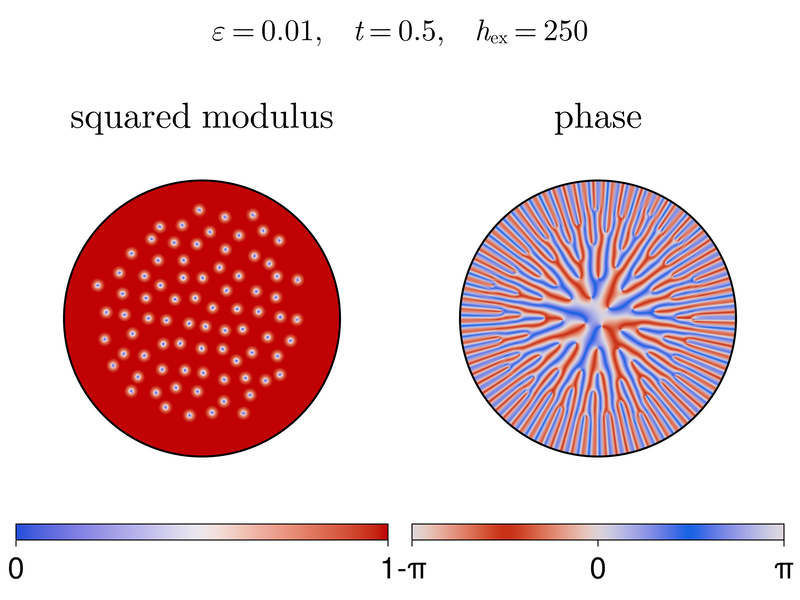}\hfill
    \includegraphics[width=0.45\linewidth]{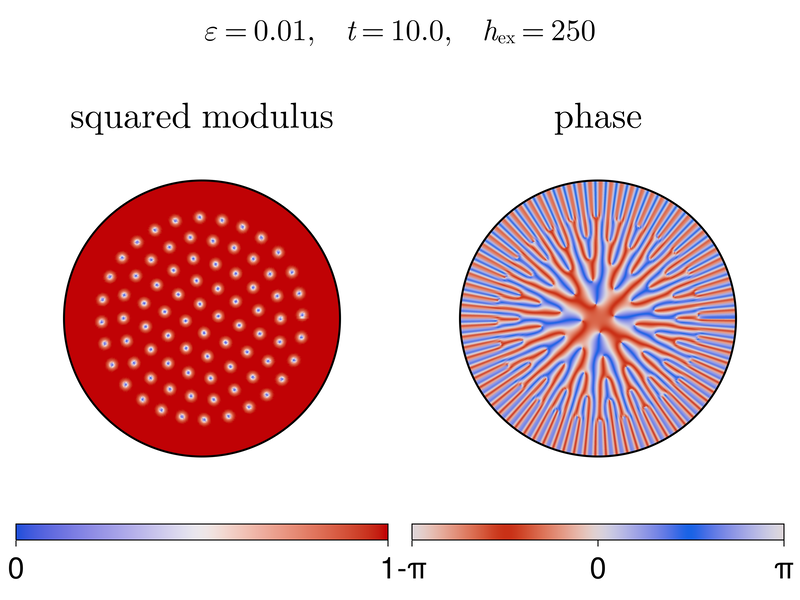}\\
    \caption{Squared modulus and phase of $u_\epsilon^*(t)$ at various times $t$
    under a mixed-flow (Case 4).}
    \label{fig:dissip_approx_case4}
  \end{figure}

\section{Conclusion}

In this work, we first presented a numerical method to simulate the limiting ODE
dynamics of point-like magnetic vortices arising from the TDGL equations with
constant magnetic fields. From these trajectories, we proposed a numerical
method to recover approximate solutions of the TDGL equations in the regime of
small but finite $\epsilon$ under a constant external magnetic
field and applied it for both the Schrödinger-flow and
  mixed-flow cases. This method avoids the use of very fine meshes to properly
capture the vortices core, thereby allowing for significantly faster and
accurate simulations of the physical quantities of interest such as the super
current, the magnetic field, and the density of Cooper pairs. Remarkably, the
method becomes more accurate the smaller $\epsilon$ is. However, the accuracy is
only guaranteed as long as the vortex pairs are sufficiently separated. In
particular, this prevents accurate simulations of some physical phenomena such
as radiation waves emitted by vortex-antivortex annihilation, which is left for
future work.

\section*{Data availability}

All the codes used to generate the plots and run the simulations from this paper
are available at
\begin{center}\url{https://doi.org/10.18419/DARUS-5604}\end{center}

\section*{Acknowledgements}
TC acknowledges funding by the \emph{Deutsche Forschungsgemeinschaft} (DFG,
German Research Foundation) - Project number 442047500 through the Collaborative
Research Center "Sparsity and Singular Structures" (SFB 1481). The authors are
also grateful to Huadong Gao and Xie Wen for providing the Python code of
\cite{GX23} for preliminary tests.
\addtocontents{toc}{\protect\setcounter{tocdepth}{2}}
\clearpage
\appendix

\section{Renormalized energy}\label{app:W}

In this section we present a derivation of~\eqref{eq:renormalized explicit} from~\eqref{eq:renormalized variational}. This result is well-known in the literature, see e.g. \cite{LD97,Spi03}. In these works, however, the precise meaning of Neumann boundary conditions as well as the precise weak formulation of~\eqref{eq:Xieq} is not explicitly stated. Since this weak formulation is important for our approximability result (cf. Corollary~\ref{cor:discrete approximation}) and follows naturally from the derivation of~\eqref{eq:renormalized explicit}, we sketch a self-contained proof of this derivation here.
\begin{proof}[Proof of~\eqref{eq:renormalized explicit}] Since $\Omega$ is simply connected, any $A \in \mH^1_{\rm div}(\Omega;\R^2)$ can be written as $A = -\nabla^\perp \Xi$ for $\Xi \in \mH^2(\Omega;\R) \cap \mH^1_0(\Omega;\R)$. Moreover, by standard elliptic regularity
\begin{align}
 \norm{A}_{\mH^1(\Omega)} \leq \norm{\Xi}_{\mH^2(\Omega)} \lesssim \norm{\curl A}_{\mL^2(\Omega)} = \norm{\Delta \Xi}_{\mL^2(\Omega)} \label{eq:standard elliptic est}
 \end{align}
 Similarly, by the results in \cite[Chapter 2]{BM21}, any $u\in \mathscr{H}^1_\rho(a)$ can be written as
\begin{align*}
	u = \prod_{j=1}^n \underbrace{\left(\frac{x-a_j}{|x-a_j|}\right)^{d_j}}_{=: \phi_j} \exp(\ii \Theta)
\end{align*}
for some $\Theta\in \mH^1(\Omega;\R)$. Next, note that from straightforward calculations
\begin{align}
 \frac{\nabla \phi_j}{\phi_j}(x) = \ii d_j \frac{(x-a_j)^\perp}{|x-a_j|^2} = \ii d_j \nabla^\perp \log(|x-a_j|),
\end{align}
and therefore
\begin{align*}
\int_{\Omega_\rho(a)} |\nabla_A u|^2 \mathrm{d} x &= \int_{\Omega_\rho(a)} |\nabla \Theta + \nabla^\perp \Xi + \sum_{j=1}^n d_j \nabla^\perp \log(|x-a_j|)|^2  \mathrm{d} x  \\
&=  \sum_{j, k =1}^n d_j d_k \underbrace{\int_{\Omega_\rho(a)} \nabla^\perp \log(|x-a_j|) \scpr \nabla^\perp \log(|x-a_k|)}_{=: (I)_{j,k}} \\
&+ \underbrace{\sum_{j=1}^n 2d_j \int_{\Omega_\rho(a)} \nabla^\perp \Xi \scpr \nabla^\perp \log(|x-a_j|)\mathrm{d}x }_{=:(II)}  + \underbrace{\sum_{j=1}^n 2d_j \int_{\Omega_\rho(a)} \nabla \Theta \scpr \nabla^\perp \log(|x-a_j|)\mathrm{d}x }_{=: (III)} \\
&+\underbrace{\int_{\Omega_\rho(a)} 2 \nabla \Theta \scpr \nabla^\perp \Xi \mathrm{d} x}_{=: (IV)} + \int_{\Omega_\rho(a)}  |\nabla \Theta|^2 + |\nabla \Xi|^2 \mathrm{d} x.
\end{align*}
The terms $(I)_{j,k}$ are independent of $h$ and $\Xi$. The diagonal terms $(I)_{j,j}$ are the ones corresponding to the vortex energy contribution and require the renormalization factors $\log(1/\rho)$. This can be seen as follows. By integration by parts (divergence theorem) and the fact that $\Delta \log(|x-a_j|) = 0$ on $\Omega_\rho(a)$, we find
\begin{align}
(I)_{j,j} &= \int_{\Omega_\rho(a)} |\nabla \log(|x-a_j|)|^2 \mathrm{d} x = \int_{\Omega\setminus B_\rho(a_j)} |\nabla \log(|x-a_j|)|^2 \mathrm{d} x + \mathcal{O}(\rho^2) \nonumber \\
&= \int_{\partial \Omega} \log(|x-a_j|) \partial_\nu \log(|x-a_j|) \mathscr{H}^1(\mathrm{d} x) - \int_{\partial B_\rho(a_j)} \frac{\log(|x-a_j|)}{|x-a_j|} \mathscr{H}^1(\mathrm{d} x)  + \mathcal{O}(\rho^2)  \nonumber \\
&= \int_{\partial \Omega} \log(|x-a_j|) \partial_\nu \log(|x-a_j|) \mathscr{H}^1(\mathrm{d} x) - 2\pi \log(\rho) + \mathcal{O}(\rho^2) \label{eq:diagonal I term}
\end{align}
For $(I)_{j,k}$ with $j \neq k$, one can also use integration by parts and the fact that $|x-a_j|^{-1} \in \mL^p_{\rm loc}(\R^2)$ for any $1\leq p <2$ together with H\"older's inequality to obtain
\begin{align}
	(I)_{j,k} = \int_{\partial \Omega} \log(|x-a_j|) \partial_\nu \log(|x-a_k|) \mathscr{H}^1(\mathrm{d} x) - 2\pi \log(|a_k-a_j|)  +  \mathcal{O}(\rho^{2\frac{p-1}{p}}) \quad \mbox{(for $1\leq p <2$).} \label{eq:off-diagonal I term}
\end{align}
For the term $(II)$, we note that, since $\Xi = 0$ on $\partial \Omega$, $\Xi$ belongs to the H\"older continuous class $C^{0,\alpha}(\Omega)$ for any $\alpha<1$ (by the Sobolev embedding), and $\Delta \log(|x-a_j|) = 0$ on $\Omega_\rho(a)$, we have
\begin{align}
 (II) = -\sum_{j=1} 2d_j \sum_{k=1}^n \int_{B_\rho(a_k)} \Xi(x) \frac{(x-a_j)}{|x-a_j|^2} \scpr \frac{(x-a_k)}{|x-a_k|} \mathscr{H}^1(\mathrm{d} x) = -\sum_{j=1} 4 \pi d_j \Xi(a_j) + \mathcal{O}_\alpha(\rho^\alpha \norm{\Xi}_{\mH^2}). \label{eq:II term}
\end{align}
For $(III)$, we note that
\begin{align*}
	(III) &= \sum_{j=1}^n 2d_j \int_{\Omega_\rho(a)} \mathrm{div}\left(\Theta \nabla^\perp \log(|x-a_j|)\right) \mathrm{d} x  \\
	&= \sum_{j=1}^n 2d_j \int_{\partial \Omega} \Theta(x) \partial_\tau \log(|x-a_j|) \mathscr{H}^1(\mathrm{d} x)
	- \sum_{k=1}^n 2d_j \int_{\partial B_\rho(a_k)} \Theta(x) \partial_\tau \log(|x-a_j|) \mathscr{H}^1(\mathrm{d} x).
\end{align*}
As $\partial_\tau \log(|x-a_j|) = 0$ along $\partial B_\rho(a_j)$ and $\nabla \log(|x-a_j|)$ is bounded along $\partial B_\rho(a_k)$ for $k\neq j$, we can use the trace estimate $\norm{\Theta}_{\mL^2(\partial B_\rho(a_j))} \lesssim \norm{\Theta}_{\mH^1(\Omega_\rho(a))}$ (with a constant independent\footnote{Such an estimate can be proved, e.g., via polar coordinates and using a Fourier series expansion along circles centred at $a_j$.} of $\rho$) and Cauchy-Schwarz to conclude that
\begin{align}
	(III) = 2\sum_{j=1}^n d_j \int_{\partial \Omega} \Theta(x) \partial_\tau \log(|x-a_j|) \mathscr{H}^1(\mathrm{d} x) + \mathcal{O}(\rho^{\frac12} \norm{\Theta}_{\mH^1(\Omega_\rho(a))}).  \label{eq:III term}
\end{align}
Similarly, for $(IV)$ one can use the previous inequality together with Sobolev's inequality to obtain
\begin{align}
	(IV) &=  2\int_{\Omega_\rho(a)} \mathrm{div} (\Theta \nabla^\perp \Xi) \mathrm{d} x  = \int_{\partial \Omega} \Theta \partial_\tau \Xi \mathscr{H}^1(\mathrm{d}x) - \sum_{j=1}^n \int_{\partial B_\rho(a_j)} \Theta \partial_\tau \Xi \mathscr{H}^1(\mathrm{d}x)\nonumber \\
	&=  \mathcal{O}(\rho^{\frac12} \norm{\Theta}_{\mH^1(\Omega_\rho(a))} \norm{\Xi}_{\mH^2})   \label{eq:IV term}
\end{align}
where the last equality holds because $\nu \scpr \nabla^\perp \Xi = \partial_\tau \Xi = 0$ on $\partial \Omega$ (since $\Xi \in \mH^1_0(\Omega)$).

Now note that, it suffices to consider trial states with uniform (in $\rho$)
bound on $\norm{\Xi}_{\mH^2}$ and $\norm{\Theta}_{\mH^1(\Omega_\rho(a))}$ as
otherwise either the term $\int |\Delta \Xi - h_{\rm ex}|^2$ or the term
$\int|\nabla \Theta|^2$ would blow-up. Thus, we can combine
estimates~\eqref{eq:diagonal I term}--~\eqref{eq:IV term} and pass to the limit
$\rho \searrow 0$ to conclude that
\begin{align*}
	&W_\Omega(a,d;h_{\rm ex}) = \! \inf_{\substack{\Theta \in \mH^1(\Omega) \\ \Xi \in \mH^2(\Omega) \cap \mH^1_0(\Omega)}}\! \biggr\{ -\pi \sum_{j\neq k} d_j d_k \log(|a_j - a_k|) \\
	&+ \underbrace{\frac12 \int_\Omega |\nabla \Theta|^2 \mathrm{d} x + \sum_{j=1}^n d_j \int_{\partial \Omega} \Theta(x) \partial_\tau \log(|x-a_j|) + \frac12 \sum_{k=1}^n d_k \log(|x-a_j|) \partial_\nu \log(|x-a_k|) \mathrm{d} x}_{=: W_1(\Theta)} \\
	&+ \underbrace{\frac12 \int_\Omega |\nabla \Xi|^2 + |\Delta \Xi - h_{\rm ex}|^2 \mathrm{d} x - \sum_{j=1}^n 2\pi d_j \Xi(a_j)}_{=: W_2(\Xi)} \biggr\}
\end{align*}
In particular, the above minimization is a separate minimization in $\Theta$ and $\Xi$.

For the minimization in $\Theta$, note that $W_1(\Theta)$ is strictly convex, and therefore $W_1 = \inf_\Theta W_1(\Theta) = - \frac12 \int |\nabla \Theta|^2$ where $\Theta$ is the unique weak solution of
\begin{align*}
	\begin{dcases} \Delta\Theta = 0 \quad \mbox{in $\Omega$,} \\
	\partial_\nu \Theta = - \sum_{j=1}^n d_j \partial_\tau \log(|x-a_j|), \quad &\mbox{on $\partial \Omega$.} \end{dcases}
\end{align*}
In particular, if we define the conjugate harmonic function of $\Theta$ as the unique up to a constant function $R$ such that $\nabla^\perp R = \nabla \Theta$, then the constant can be chosen such that $R$ solves~\eqref{eq:Req}. Hence
\begin{align*}
	W_1 &= \int_{\Omega} |\nabla \Theta|^2 \mathrm{d}x - \frac12 \sum_{j} d_j \int_{\partial \Omega} \log(|x-a_j|) \partial_\nu \Theta(x)  + R(x) \partial_\nu \log(|x-a_j|) \mathscr{H}^1(\mathrm{d}x) \\
	&= - \frac12 \int_{\Omega} |\nabla \Theta|^2 - \frac12 \sum_{j=1}^n \int_{\partial \Omega} R(x) \partial_\nu \log(|x-a_k|) \mathscr{H}^1(\mathrm{d}x) \\
	&=  - \frac12 \int_{\Omega} |\nabla R|^2 - \frac12 \sum_{j=1}^n \int_{\partial \Omega} R(x) \partial_\nu \log(|x-a_k|) \mathscr{H}^1(\mathrm{d}x) \\
	&= \frac12 \sum_{j=1}^n d_j \int_{\partial \Omega} \log(|x-a_j|) \partial_\nu R(x) - R(x) \log(|x-a_j|) \mathrm{d} x = - \pi \sum_{j=1}^n d_j R(a_j),
\end{align*}
where we used that $\Delta \log = 2\pi \delta_0$ in the last step.

Similarly, the minimizer of $W_2$ is the unique weak solution $\Xi \in \mH^2(\Omega) \cap \mH^1_0(\Omega)$ of
\begin{align*}
	\int_\Omega \nabla \Xi \scpr \nabla \psi + \Delta \Xi \Delta \psi\mathrm{d} x = -h_{\rm ex} \int_\Omega \Delta \psi + 2 \pi \sum_{j=1}^n d_j \psi(a_j) = - h_{\rm ex} \int_{\partial \Omega} \partial_\nu \psi \mathscr{H}^1(\mathrm{d}x) + 2\pi \sum_{j=1}^n d_j \psi(a_j),
\end{align*}
for $\psi \in \mH^2(\Omega) \cap \mH^1_0(\Omega)$.

\end{proof}

\section{Well-posedness of the modified biharmonic equation}\label{app:biharm}

In this section we recall the following elementary result on the existence and uniqueness of solutions of the modified biharmonic equation~\eqref{eq:Xieq}.

\begin{lemma}[Existence and uniqueness] \label{lem:well-posed biharmonic} Let $F: \mH^2(\Omega) \cap \mH^1_0(\Omega) \rightarrow \R$ be a continuous functional, then there exists a unique $\Xi \in \mH^2(\Omega) \cap \mH^1_0(\Omega)$ such that
\begin{align}
	\int_\Omega \nabla \Xi(x) \scpr \nabla \psi(x) + \Delta \Xi(x) \Delta \psi(x) \mathrm{d} x = F(\Psi), \quad \mbox{for any $\psi \in \mH^2(\Omega) \cap \mH^1_0(\Omega)$.} \label{eq:weak formulation}
\end{align}
\end{lemma}

\begin{proof} Uniqueness follow since any such weak solution is a critical point of the strictly convex function $G(\Xi) = \frac12 \int |\nabla \Xi|^2 + |\Delta \Xi|^2 - F(\Xi)$. Existence follows from the standard Lax-Milgram argument, or the direct method.
\end{proof}

An immediate corollary of this result is the existence and uniqueness of weak solutions of~\eqref{eq:weak Weq}, as claimed in Remark~\ref{rem:weak solution}.
\begin{corollary}[Existence and uniqueness] \label{cor:wel-posedness} Let $b:\partial \Omega \rightarrow \R$ be the boundary trace of a function in $\mH^2(\Omega)$ and $F: \mH^2(\Omega) \cap \mH^1_0(\Omega) \rightarrow \R$ be a continuous linear functional, then there exists a unique weak solution $\Xi \in \mH^2(\Omega)$ of the problem
\begin{align}
    \int_\Omega \Delta \Xi(x) (\Delta - 1) \psi(x) \mathrm{d}x = F(\psi) \quad \mbox{for any $\psi \in \mH^2(\Omega) \cap \mH^1_0(\Omega)$.} \label{eq:weak non-homogeneous}
\end{align}
with boundary conditions $\Xi \rvert_{\partial \Omega} = b$.
\end{corollary}
\begin{proof}
    Let $B\in \mH^2(\Omega)$ be a function satisfying $B\rvert_{\partial \Omega} = b$ (which exists by assumption), then let $\Xi_h \in \mH^2(\Omega) \cap \mH^1_0(\Omega)$ be the unique solution of
    \begin{align*}
        \int_\Omega \nabla \Xi_h(x) \scpr \nabla \psi(x) + \Delta \Xi_h(x) \Delta \psi(x) \mathrm{d} x = F(\psi) - \int_{\Omega} \Delta B (\Delta - 1) \psi, \quad \mbox{for any $\psi \in \mH^2(\Omega) \cap \mH^1_0(\Omega)$,}
    \end{align*}
    which exists by Lemma~\ref{lem:well-posed biharmonic}. Then, by integration by parts, the function $\Xi = \Xi_h + B$ satisfies~\eqref{eq:weak non-homogeneous} and has the correct boundary conditions. Uniqueness follows from the uniqueness statement in Lemma~\ref{lem:well-posed biharmonic}.
\end{proof}


\end{document}